\LetLtxMacro\SavedIncludeGraphics\includegraphics
\def\includegraphics#1#{
 \IncludeGraphicsAux{#1}%
}%
\newcommand*{\IncludeGraphicsAux}[2]{%
 \XeTeXLinkBox{%
  \SavedIncludeGraphics#1{#2}%
}}
\titleformat{\section}{\large\bfseries\centering}{\thesection}{1em}{}
\titleformat{\subsection}{\centering\bfseries}{\thesubsection}{0.5em}{}
\titleformat{\subsubsection}[runin]{\bfseries}{\thesubsubsection.}{0.4em}{}[.]
\numberwithin{equation}{section}
\newtheorem{lemma}{Lemma}[section]
\newtheorem{proposition}[lemma]{Proposition}
\newtheorem{theorem}{Theorem}[section]
\newtheorem{definition}{Definition}[section]
\newcommand\w[1]{\makebox[1em]{$#1$}}
\def\d{\,\mathrm{d}}
\def\p{\partial}
\def\wideubar{\underaccent{{\cc@style\underline{\mskip10mu}}}}
\def\Wideubar{\underaccent{{\cc@style\underline{\mskip8mu}}}}
\def\widebar{\accentset{{\cc@style\underline{\mskip10mu}}}}
\def\Widebar{\accentset{{\cc@style\underline{\mskip8mu}}}}
\newcommand*\xbar[1]{%
	\hbox{%
		\vbox{%
			\hrule height 0.4pt 
			\kern0.3ex
			\hbox{%
				\kern-0.1em
				\ensuremath{#1}%
				\kern-0.1em
}}}}
\newcommand{\VERTiii}[1]{{\left\vert\kern-0.3ex\left\vert\kern-0.3ex\left\vert #1
		\right\vert\kern-0.3ex\right\vert\kern-0.3ex\right\vert}}
\newcommand{\VERT}{\vert\kern-0.3ex\vert\kern-0.3ex\vert}
\newcommand{\VERTl}{\left\vert\kern-0.3ex\left\vert\kern-0.3ex\left\vert}
\newcommand{\VERTr}{\right\vert\kern-0.3ex\right\vert\kern-0.3ex\right\vert}
\newcommand{\VERTbig}{\big\vert\kern-0.3ex\big\vert\kern-0.3ex\big\vert}
\newcommand{\VERTBig}{\Big\vert\kern-0.3ex\Big\vert\kern-0.3ex\Big\vert}
\definecolor{DarkRed}{RGB}{139,0,0}
\definecolor{Purple}{RGB}{128,0,128}
\begin{document}
\title{\bf Well-Posedness for the Free-Boundary Ideal Compressible Magnetohydrodynamic Equations with Surface Tension\let\thefootnote\relax\footnotetext{
		The research of {\sc Yuri Trakhinin}
		was partially supported by RFBR (Russian Foundation for Basic Research) under Grant 19-01-00261-a and by Mathematical Center in Akademgorodok under Agreement No.~075-15-2019-1675 with the Ministry of Science and Higher Education of the Russian Federation.
		The research of {\sc Tao Wang}
		was partially supported by the National Natural Science Foundation of China under Grants 11971359 and 11731008.
	}
}
\author{
{\sc Yuri Trakhinin}
\thanks{e-mail: trakhin@math.nsc.ru}\\
{\footnotesize Sobolev Institute of Mathematics, Koptyug av.~4, 630090 Novosibirsk, Russia}\\
{\footnotesize Novosibirsk State University, Pirogova str.~1, 630090 Novosibirsk, Russia}\\[2mm]
{\sc Tao Wang}
\thanks{e-mail: tao.wang@whu.edu.cn}\\
{\footnotesize School of Mathematics and Statistics, Wuhan University, Wuhan 430072, China}
}

\date{\empty}
\maketitle

\begin{abstract}

We establish the local existence and uniqueness of solutions to the free-boundary ideal compressible magnetohydrodynamic equations with surface tension in three spatial dimensions by a suitable modification of the Nash--Moser iteration scheme. The main ingredients in proving the convergence of the scheme are the tame estimates and unique solvability of the linearized problem in the anisotropic Sobolev spaces $H_*^m$ for $m$ large enough. In order to derive the tame estimates, we make full use of the boundary regularity enhanced from the surface tension. The unique solution of the linearized problem is constructed by designing some suitable $\varepsilon$--regularization and passing to the limit $\varepsilon\to 0$.

\end{abstract}

{\small

\noindent{\bf Keywords}:
Free boundary problem,
Ideal compressible magnetohydrodynamics,
Surface tension,
Well-posedness,
Nash--Moser iteration

\vspace*{2mm}

\noindent{\bf Mathematics Subject Classification (2010)}:\quad
35L65,  
35R35,  
76N10, 
76W05 

\setcounter{tocdepth}{3} 

\tableofcontents
}

\section{Introduction} \label{sec:intro}

We consider the free-boundary ideal compressible magnetohydrodynamic (MHD) equations with surface tension governing the dynamics of inviscid, compressible, and electrically conducting fluids in three spatial dimensions.
Let $\varOmega(t):=\{x\in\mathbb{R}^3:x_1>\varphi(t,x') \} $ be the changing volume occupied by the conducting fluid at time $t$, where $x':=(x_2,x_3)$ is the tangential coordinate.
The free boundary problem reads
\begin{subequations}
 \label{FBP1}
 \begin{alignat}{3}
 \label{MHD1a}
 &\p_t \rho+\nabla\cdot (\rho v)=0
 && \textrm{in } \varOmega(t), \\
 \label{MHD1b}
 &\p_t (\rho v)+\nabla\cdot (\rho v\otimes v-H\otimes H)+\nabla q=0
 && \textrm{in } \varOmega(t), \\
 \label{MHD1c}
 &\p_t H-\nabla\times(v\times H)=0
 && \textrm{in } \varOmega(t), \\
 \label{MHD1d}
 &\p_t(\rho E+\tfrac{1}{2}|H|^2) +\nabla\cdot ( v  (\rho E+p)+H\times(v\times H) )=0
 &\quad & \textrm{in } \varOmega(t),\\
 \label{BC1a}
 & \p_t \varphi=v\cdot N && \textrm{on } \varSigma(t), \\
 \label{BC1b}
 & q=\mathfrak{s}\mathcal{H}(\varphi) && \textrm{on } \varSigma(t),\\
 \label{IC1}
 &(\rho,v,H,S,\varphi)|_{t=0}=(\rho_0,v_0,H_0,S_0,\varphi_0),  &&
 \end{alignat}
\end{subequations}
supplemented with the constraints
\begin{alignat}{2}
\label{inv1}
&\nabla\cdot H=0 \quad &&\textrm{in } \varOmega(t),\\
\label{inv2}
&H\cdot N=0 &&\textrm{on } \varSigma(t),
\end{alignat}
for $\p_t:=\frac{\p}{\p t}$ and $\nabla:=(\p_1,\p_2,\p_3)^{\mathsf{T}}$ with $\p_i:=\frac{\p}{\p x_i}$,
where the density $\rho$, velocity $v\in\mathbb{R}^3$, magnetic field $H\in\mathbb{R}^3$, specific entropy $S$, and interface function $\varphi$ are to be determined.
Symbols $q=p+\frac{1}{2}|H|^2$ and $E=e+\frac{1}{2}|v|^2$ stand for the total pressure and specific total energy, respectively, where $p$ is the pressure and $e$ is the specific internal energy.
The thermodynamic variables $\rho$ and $e$ are given smooth functions of $p$ and $S$ satisfying the Gibbs relation
\begin{align} \label{Gibbs}
\vartheta \d S=\d e+p\d \left(\frac1\rho\right),
\end{align}
where $\vartheta>0$ is the absolute temperature.
We denote by $\varSigma(t):=\{x\in\mathbb{R}^3: x_1=\varphi(t,x')\}$ the moving vacuum boundary,
by $N:=(1,-\p_2\varphi,-\p_3\varphi)^{\mathsf{T}}$ the normal vector to $\varSigma(t)$,
by $\mathfrak{s}>0$ the constant coefficient of surface tension,
and by $\mathcal{H}(\varphi)$ twice the mean curvature of the boundary, that is,
\begin{align}
\label{H.cal:def}
\mathcal{H}(\varphi):= \mathrm{D}_{x'}\cdot\left( \frac{\mathrm{D}_{x'}\varphi }{\sqrt{1+|\mathrm{D}_{x'}\varphi|^2 }}\right)
\quad \textrm{with }
\mathrm{D}_{x'}:=
\begin{pmatrix} \p_2\\ \p_3 \end{pmatrix}.
\end{align}

The boundary condition \eqref{BC1a} states that the free boundary moves with the velocity of the conducting fluid and makes the free surface $\varSigma(t)$ {\it characteristic}.
The boundary condition \eqref{BC1b} results from surface tension and zero vacuum magnetic field. 
We refer to {\sc Landau--Lifshitz} \cite[\S 65]{LL84MR766230} and {\sc Delhaye} \cite{Delhaye1974} for the derivation of the MHD equations \eqref{MHD1a}--\eqref{MHD1d} and the boundary condition \eqref{BC1b}.
It should be noted that the effect of surface tension is especially important for modelling MHD flows in liquid metals (see, e.g., \cite{SDGX07MR2356385,Yang-Khodak-Stone} and references therein).
Even for MHD modelling of large scales phenomena like those in astrophysical plasmas, where the effect of surface tension and diffusion is usually neglected, it is still useful to keep surface tension as a stabilizing mechanism in numerical simulations of magnetic Rayleigh--Taylor instability \cite{SG1,SG2}.

{The absence of the magnetic field, {\it i.e.}, $H\equiv 0$, reduces the problem \eqref{FBP1} to the moving vacuum boundary problem for the compressible Euler equations, which has been studied extensively in the recent decades.
In the case of zero surface tension, $\mathfrak{s}=0$, an ill-posedness result have been shown by {\sc Ebin} \cite{E87MR886344} for the incompressible Euler equations when the Rayleigh--Taylor sign condition is violated, 
while the local well-posedness has been established
in \cite{L05AnnMR2178961,CS07MR2291920,ZZ08MR2410409,W99MR1641609} and \cite{L05CMPMR2177323,T09CPAMMR2560044} respectively for incompressible and compressible liquids under the Rayleigh--Taylor sign condition,
and in \cite{CS12MR2980528,JM15MR3280249} for compressible gases under the physical vacuum condition.
In the case of positive surface tension, $\mathfrak{s}>0$,
the local well-posedness has been proved independently by {\sc Coutand--Shkoller} \cite{CS07MR2291920} and {\sc Shatah--Zeng} \cite{SZ08aMR2388661,SZ11MR2763036} for the incompressible Euler equations,
and by {\sc Coutand et al.}~\cite{CHS13MR3139610} for compressible isentropic liquids,
without imposing the Rayleigh--Taylor sign condition. 
The results of \cite{CS07MR2291920,CHS13MR3139610,SZ08aMR2388661,SZ11MR2763036} indicate that the surface tension provides a regularizing effect on the vacuum boundary.}

{On the other hand, there have been only few results on the free boundary problem \eqref{FBP1} for the ideal compressible MHD, due to the difficulties caused by the complex interplay between the velocity and magnetic fields.
For the free-boundary ideal incompressible MHD without surface tension,
the {\it a priori} estimates and local well-posedness have been respectively proved by {\sc Hao--Luo} \cite{HL14MR3187678} and {\sc Gu--Wang} \cite{GW19MR3980843} under the generalized Rayleigh--Taylor sign condition on the total pressure, while a counterexample to well-posedness has been provided by {\sc Hao--Luo} \cite{HL20MR4093862} when the generalized sign condition fails.
Regarding the ideal compressible MHD equations \eqref{FBP1},
the authors \cite{TW21MR4201624} have recently established the local well-posedness for the case of zero surface tension $\mathfrak{s}=0$ under the generalized sign condition.
It would expect that the surface tension can have a stabilization effect on the evolution as for the cases without magnetic fields.
The goal of the present paper is to confirm this effect rigorously, or, more precisely, to construct the unique solution of the problem \eqref{FBP1} with surface tension replacing the generalized sign condition.}



Suppose that the sound speed $a:=a(\rho,S)$ is smooth and satisfies
\begin{align} \nonumber 
	a(\rho,S):=\sqrt{\frac{\p p}{\p \rho} (\rho, S)}>0
	\quad \textrm{for all \ } \rho \in (\rho_*,\rho^*) ,
\end{align}
where $\rho_*$ and $\rho^*$ are positive constants with $\rho_*<\rho^*$.
In this paper we consider the liquids for which the density $\rho$ belongs to $(\rho_*,\rho^*)$.
Consequently, it follows from the constraint \eqref{inv1} and the Gibbs relation \eqref{Gibbs} that the MHD equations \eqref{MHD1a}--\eqref{MHD1d} are equivalent to the symmetric hyperbolic system
\begin{align}
\label{MHD.vec}
A_0(U)\p_t U+\sum_{i=1}^{3}A_i(U)\p_i U=0\qquad \textrm{in } \varOmega(t),
\end{align}
where we choose $U:=(q,v,H,S)^{\mathsf{T}}$ as the primary unknowns, and
\setlength{\arraycolsep}{4pt}
\begin{align}
\label{A0.def}
&A_0(U):=
\begin{pmatrix}
\dfrac{1}{\rho a^2} & 0 & -\dfrac{1}{\rho a^2} H^{\mathsf{T}} & 0\\[2.5mm]
0 & \rho {I}_3 & {O}_3 & 0\\[1.5mm]
-\dfrac{1}{\rho a^2} H & {O}_3 & {I}_3+\dfrac{1}{\rho a^2}H\otimes H & 0\\[1.5mm]
\w{0}  & \w{0}  & \w{0} & \w{1}
\end{pmatrix},
\\ 
\label{Ai.def}
&A_i(U):=
\begin{pmatrix}
\dfrac{v_i}{\rho a^2} & \bm{e}_i^{\mathsf{T}} & -\dfrac{v_i}{\rho a^2} H^{\mathsf{T}} & 0\\[2.5mm]
\bm{e}_i & \rho v_i {I}_3 & -H_i {I}_3 & 0\\[1.5mm]
-\dfrac{v_i}{\rho a^2} H & -H_i {I}_3 & v_i{I}_3+\dfrac{v_i}{\rho a^2}H\otimes H & 0\\[1.5mm]
\w{0}  & \w{0}  & \w{0} & \w{v_i}
\end{pmatrix}
 \textrm{ for $i=1,2,3$.}
\end{align}
Here and in what follows, ${O}_m$ and ${I}_{m}$ denote the zero and identity matrices of order $m$, respectively,
and $\big\{\bm{e}_1:=(1,0,0)^{\mathsf{T}}$, $\bm{e}_2:=(0,1,0)^{\mathsf{T}}$, $\bm{e}_3:=(0,0,1)^{\mathsf{T}}\big\}$ is the standard basis of $\mathbb{R}^3$.

We reduce the free boundary problem \eqref{FBP1} into a fixed domain by straightening the unknown surface $\varSigma(t)$. More precisely, we introduce
\begin{align} \label{U.sharp}
U_{\sharp}(t,x):=U(t,\varPhi(t,x),x'),
\end{align}
where $\varPhi$ takes the following form that is suggested by {\sc M\'{e}tivier} \cite[p.\;70]{M01MR1842775}:
\begin{align} \label{varPhi.def}
\varPhi(t,x):=x_1+\kappa_{\sharp} \chi( x_1)\varphi(t,x'),
\end{align}
with the constant $\kappa_{\sharp}>0$ and function $\chi\in C^{\infty}_0(\mathbb{R})$ satisfying
\begin{align} \label{chi}
4\kappa_{\sharp} \|\varphi_0\|_{L^{\infty}(\mathbb{R}^{2}) }\leq  1,
\quad \|\chi'\|_{L^{\infty}(\mathbb{R})} <1,
\quad \chi\equiv 1\  \textrm{on } [0,1].
\end{align}
This change of variables is admissible on the time interval $[0,T]$,
provided $T>0$ is suitably small.
Without loss of generality we set $\kappa_{\sharp}=1$.
As in \cite{M01MR1842775,T09CPAMMR2560044,TW21MR4201624,T09ARMAMR2481071}, we employ the cut-off function $\chi$ to avoid assumptions about compact support of the initial data (shifted to an equilibrium state).

Then the moving boundary problem \eqref{FBP1} is reformulated as the following fixed-boundary problem:
\begin{subequations} \label{NP1}
 \begin{alignat}{2}
 \label{NP1a}
 &\mathbb{L}(U,\varPhi) :=L(U,\varPhi)U =0
 &\quad &\textrm{in  }  [0,T]\times \varOmega,\\
 \label{NP1b}
 &\mathbb{B}(U,\varphi):=
 \begin{pmatrix}
 \p_t \varphi -v\cdot N\\ q-\mathfrak{s}\mathcal{H}(\varphi)
 \end{pmatrix}=0
 & \quad  &\textrm{on  } [0,T]\times \varSigma,\\
 \label{NP1c}
 &(U,\varphi)|_{t=0}=(U_0,\varphi_0),
 & \quad &
 \end{alignat}
\end{subequations}
where we drop for convenience the subscript $``\sharp"$ in $U_{\sharp}$,
and define the fixed domain $\varOmega:=\{x\in \mathbb{R}^3:\, x_1>0\}$, the boundary $\varSigma:=\{x\in \mathbb{R}^3:\, x_1=0\}$, and the operator
\begin{align}
\label{L.def}
&L(U,\varPhi):=A_0(U)\partial_t+\widetilde{A}_1(U,\varPhi)\partial_1+ A_2(U)\partial_2
+A_3(U)\partial_3,
\end{align}
with
\begin{align}
\label{A1t.def}
&\widetilde{A}_1(U,\varPhi):=
\frac{1}{\partial_1\varPhi}\big(A_1(U)-\partial_t\varPhi A_0(U)-\partial_2\varPhi A_2(U)-\partial_3\varPhi A_3(U)\big).
\end{align}
In the new variables, the identities \eqref{inv1}--\eqref{inv2} become
\begin{alignat}{3}
\label{inv1b}
& \p_{1}^{\varPhi}H_1+\p_{2}^{\varPhi}H_2+\p_{3}^{\varPhi} H_3=0\qquad &&\textrm{if  }  x_1>0,\\
\label{inv2b}
&H\cdot N=0\qquad &&\textrm{if  }  x_1=0,
\end{alignat}
where  
\begin{align} \label{differential}
	\partial_t^{\varPhi}:=\partial_t-\frac{\partial_t\varPhi}{\partial_1\varPhi}\p_1,\ \
	\partial_1^{\varPhi}:=\frac{1}{\partial_1\varPhi}\partial_1,\ \
	\partial_i^{\varPhi}:=\partial_i-\frac{\partial_i\varPhi}{\partial_1\varPhi}\partial_1
	\ \   \textrm{for }  i=2,3.
\end{align}
The identities \eqref{inv1b}--\eqref{inv2b} can be taken as initial constraints; we refer the reader to \cite[Appendix A]{T09ARMAMR2481071} for the proof.

Let us denote by $\lfloor s \rfloor$ the floor function mapping $s\in\mathbb{R}$ to the greatest integer less than or equal to $s$.
Now we are in a position to state the local well-posedness theorem for the problem \eqref{NP1}, which clearly implies a corresponding theorem for the original free boundary problem \eqref{FBP1}.

\begin{theorem}
\label{thm:main}
Let $m\in\mathbb{N}$ with $m\geq 20$.
Suppose that the initial data $(U_0,\varphi_0)$ satisfy the hyperbolicity condition $\rho_*<\inf_{\varOmega}\rho({U}_0)\leq \sup_{\varOmega}\rho({U}_0)<\rho^*$, the constraints \eqref{inv1b}--\eqref{inv2b}, and the compatibility conditions up to order $m$ (see Definition \ref{def:1}).
Suppose further that $(U_0-\widebar{U},\varphi_0)$ belongs to
$H^{m+3/2}(\varOmega)\times H^{m+2}(\mathbb{R}^{2})$ for some constant equilibrium
$\widebar{U}$ with $ \rho(\widebar{U})\in(\rho_*,\rho^*) $.
Then there are a constant $T>0$ and a unique solution $(U,\varphi)$ of the problem \eqref{NP1} on the time interval $[0,T]$, such that
\begin{align*}
U-\widebar{U}\in H^{\lfloor (m-9)/2\rfloor}([0,T]\times\varOmega),\quad
(\varphi,\mathrm{D}_{x'}\varphi)\in H^{m-9}([0,T]\times\mathbb{R}^{2}).
\end{align*}
\end{theorem}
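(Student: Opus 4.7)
The overall strategy is a Nash--Moser iteration on the nonlinear problem \eqref{NP1} in the anisotropic Sobolev scale $H_*^m$, which is the natural functional framework because the free surface $\varSigma$ is characteristic for the symmetric hyperbolic system \eqref{MHD.vec}. As a first step I would use the initial data $(U_0,\varphi_0)$ --- satisfying the hyperbolicity condition, the constraints \eqref{inv1b}--\eqref{inv2b}, and the compatibility conditions of Definition \ref{def:1} --- to construct an approximate solution $(U^a,\varphi^a)$ whose trace at $t=0$ matches $(U_0,\varphi_0)$ to order $m$ and for which $\mathbb{L}(U^a,\varPhi^a)$ and $\mathbb{B}(U^a,\varphi^a)$ vanish to the same order. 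Seeking the true solution as $(U^a+\dot U,\varphi^a+\dot\varphi)$ then reduces \eqref{NP1} to a problem with zero Cauchy data and source terms that are flat at $t=0$, to which a Nash--Moser scheme based on standard smoothing operators $S_\theta$ on $H_*^m\times H^{m+1}$ can be applied.

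The two essential analytic inputs to the scheme are the unique solvability of, and tame estimates for, the linearized problem at an arbitrary state close to $(U^a,\varphi^a)$. For the linearization I would employ Alinhac's good unknown $\dot U^{\natural}:=\delta U-(\delta\varphi/\p_1\varPhi)\,\p_1 U$, which removes the dangerous factor $\p_1\delta\varphi$ from the interior equation and leaves a characteristic symmetric hyperbolic system for $\dot U^{\natural}$ coupled to the linearized boundary conditions $\p_t\delta\varphi-N\cdot\delta v=\ldots$ and $\delta q-\mathfrak{s}\,\mathcal{H}'(\varphi)\,\delta\varphi=\ldots$. The point is that $\mathcal{H}'(\varphi)$ is, to leading order, $-\Delta_{x'}/(1+|\mathrm{D}_{x'}\varphi|^2)^{3/2}$, so that the second boundary condition is elliptic of order two in $\delta\varphi$; this is precisely the boundary regularization furnished by the surface-tension coefficient $\mathfrak{s}>0$, and it is what must replace the generalized Rayleigh--Taylor sign condition used in \cite{TW21MR4201624}.

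The hardest step, I expect, is the existence proof for this coupled linearized system. Because $\widetilde A_1$ becomes degenerate on $\varSigma$, one cannot run an $H^m$-energy estimate, and only the conormal $H_*^m$-calculus gives control of the normal component of $\dot U^{\natural}$; simultaneously, the boundary condition is of higher order than the interior symmetric system. The plan is to introduce an $\varepsilon$-regularization that makes the boundary non-characteristic --- for instance by adding a positive-definite $\varepsilon$-term in the direction annihilated by $\widetilde A_1|_{\varSigma}$ (possibly combined with a parabolic smoothing of the kinematic boundary condition) while preserving the coercive surface-tension contribution --- then to derive energy estimates in $H_*^m\times H^{m+1/2}$ uniform in $\varepsilon$, using the symmetry of $A_0$, the divergence constraint \eqref{inv1b}--\eqref{inv2b}, and the coercivity of $-\mathfrak{s}\,\Delta_{x'}$ on $\delta\varphi$, and finally to pass to the limit $\varepsilon\to 0$ to recover a solution of the original linearized problem together with the desired estimate.

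With the linearized solver in place, the tame estimates are obtained by differentiating the linearized equations and boundary conditions with respect to conormal and tangential vector fields and invoking Moser-type product and commutator inequalities in $H_*^m$; the extra half-derivative of $\delta\varphi$ gained from surface tension is used systematically to absorb commutators that would otherwise be supercritical. The Nash--Moser convergence step is then standard: one verifies the quadratic-error estimates for $(\mathbb{L},\mathbb{B})$, chooses the smoothing parameters $\theta_n$ geometrically, and checks that the fixed loss of derivatives closes provided $m\geq 20$, yielding a solution $(U,\varphi)$ of \eqref{NP1} on a short interval $[0,T]$ with the regularity stated in the theorem. Uniqueness at this level of regularity is established separately by a direct $L^2$-type energy argument on the difference of two hypothetical solutions together with Gronwall's inequality.
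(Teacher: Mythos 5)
Your proposal follows essentially the same route as the paper — approximate solution reducing to zero initial data, Alinhac's good unknown, the anisotropic Sobolev framework, surface-tension ellipticity replacing the Rayleigh--Taylor sign condition, $\varepsilon$-regularization in the noncharacteristic direction combined with a parabolic smoothing of the kinematic boundary condition, passage to the limit $\varepsilon\to 0$, Nash--Moser iteration, and an $L^2$-type energy argument for uniqueness. So the overall architecture is on target.

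However, one essential idea is missing from the linearized energy estimate you sketch, and without it the uniform-in-$\varepsilon$ step would not close. The coercivity of $-\mathfrak{s}\Delta_{x'}$ controls $\mathrm{D}_{x'}\psi$ and its tangential \emph{spatial} derivatives, but gives no control over $\p_t\psi$, because the mean curvature operator $\mathcal{H}(\varphi)$ contains no time derivative. When you differentiate the boundary conditions in $t$ and compute the boundary flux $-2\int_{\varSigma_t}\p_t W_1\,\p_t W_2$, a term of the form $2\int_{\varSigma_t}\p_1\mathring{q}\,W_2\,\p_t W_2$ appears that is not absorbable via surface tension and cannot be handled by Gr\"onwall in a naive way. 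The paper's resolution — the treatment of the term $\mathcal{J}_{1a}$ in \S\ref{sec:linear2} — converts this boundary integral into a volume integral over $\varOmega_t$ by integrating $\p_1$ inward, trading the problematic quantity for $\p_1 W_2$, whose $L^2(\varOmega)$-norm is then controlled using the interior equations because $W_2$ is a noncharacteristic component. This device, not just the ellipticity of $\mathcal{H}'$, is what actually closes the $H_*^1$ estimate. A secondary, more minor remark: you propose deriving the uniform-in-$\varepsilon$ estimate at the full $H_*^m\times H^{m+1/2}$ level, but this is both unnecessary and awkward because the $\varepsilon$-regularization introduces extra commutators at high order; the paper derives a uniform $H_*^1$ estimate for the regularized problem only to pass to the limit, and then establishes the high-order tame estimates directly for the unregularized linear problem.
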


We will construct smooth solutions to the nonlinear characteristic problem \eqref{NP1} through a suitable modification of the Nash--Moser iteration scheme developed by {\sc H\"{o}rmander} \cite{H76MR0602181} and {\sc Coulombel--Secchi} \cite{CS08MR2423311}; see \cite{AG07MR2304160,S16MR3524197} for a general description of the Nash--Moser method.
The main ingredients in proving the convergence of the Nash--Moser iteration scheme are the {\it tame estimates} and {\it unique solvability} of the linearized problem in certain function spaces ({\it cf.}~\cite[Assumptions 2.1--2.2]{S16MR3524197}).

For the linearized problem around a basic state, if the generalized Rayleigh--Taylor sign condition on the total pressure were imposed as in our previous work \cite{TW21MR4201624} for the case of zero surface tension, then we could deduce a uniform-in-$\mathfrak{s}$ $L^2$ energy estimate with no loss of derivatives from the source term to the solution ({\it cf.}\;\eqref{est1c}), allowing us to construct the unique solution by a direct application of the classical duality argument in {\sc Lax--Phillips}
\cite{LP60MR0118949} and {\sc Chazarain--Piriou}
\cite[Chapter 7]{CP82MR0678605}.

However, in this paper we aim to release the Rayleigh--Taylor sign condition and consider the general case for which the $L^2$ energy estimate is not closed.
To deal with this situation, we propose here to build the basic {\it a priori} estimate in the anisotropic Sobolev space $H_*^1$ rather than in $L^2$ for the linearized problem by taking advantage of the boundary regularity gained from the surface tension.
More precisely,
we first derive the $L^2$ energy estimates for the solutions and their {\it tangential spatial derivatives}, where the surface tension can provide good terms for the first and second order spatial derivatives of the interface function, respectively ({\it cf.}\;\eqref{est1d} and \eqref{est2b}).
However, it is difficult to deduce the estimate of the {\it time derivative}, since the mean curvature operator ({\it cf.}~\eqref{H.cal:def}) does not involve the time derivative of the interface function.
To overcome this difficulty, we reduces the tough boundary term $\mathcal{J}_{1a}$ defined in \eqref{J1.est1} to the volume integral containing the normal derivative of the noncharacteristic variable $W_2$, which can be controlled by virtue of the interior equations ({\it cf.}\;\eqref{iden5}--\eqref{est3d}).
After that, using the estimate of the time derivative of the interface function, we achieve the $H_*^1$ {\it a priori} estimate for the linearized problem ({\it cf.}\;\eqref{est4}).
To obtain high-order energy estimates for the linearized problem,
we make full use of the improved boundary regularity and combine some estimates in \cite{TW21MR4201624} that are still available for our case $\mathfrak{s}>0$ (see \S \S \ref{sec:linear4}--\ref{sec:linear5} for the complete derivation).

The other main ingredient in our proof is to construct the unique solution of the linearized problem.
For this purpose,
we design for the linearized problem some suitable $\varepsilon$--regularization, for which we can deduce an $L^2$ {\it a priori} estimate with a constant $C(\varepsilon )$ depending on the small parameter $\varepsilon>0$.
Furthermore, an $L^2$ {\it a priori} estimate can be also shown for the corresponding dual problem.
Then for any fixed and small parameter $\varepsilon>0$, the existence and uniqueness of solutions in $L^2$ can be established by the duality argument.
However, our constant $C(\varepsilon)$ tends to infinity as $\varepsilon\to 0$, and hence we are not able to use the $L^2$ estimate obtained for the regularized problem to take the limit $\varepsilon\to 0$.
To overcome this difficulty,
we derive a uniform-in-$\varepsilon$ estimate in $H_*^1$ for the regularized problem,
which enables us to solve the linearized problem by passing to the limit $\varepsilon\to 0$.

It is worth mentioning that the anisotropic Sobolev spaces, introduced first by {\sc Chen} \cite{C07MR2289911}, have been shown to be appropriate and effective for studying general symmetric
hyperbolic problems with characteristic boundary; see {\sc Secchi} \cite{S96MR1405665} for a general theory and \cite{T09ARMAMR2481071,TW21MR4201624,CW08MR2372810,ST14MR3151094} for other results on characteristic problems in ideal compressible MHD.

We emphasize that our energy estimate \eqref{tame} for the linearized problem exhibits a {\it fixed} loss of derivatives from the basic state to the solution and hence is a so-called {\it tame estimate}.
To compensate this loss of derivatives and solve the nonlinear problem, we employ the modified Nash--Moser iteration technique, which has been also applied to the study of characteristic discontinuities \cite{CSW19MR3925528,MTT18MR3766987,CS08MR2423311,T09ARMAMR2481071,CW08MR2372810} and vacuum free-boundary problems \cite{TW21MR4201624,ST14MR3151094,T09CPAMMR2560044}.
Nevertheless,
in view of the aforementioned works \cite{CHS13MR3139610,CS07MR2291920,SZ08aMR2388661,SZ11MR2763036}, it is expected to avoid the loss of regularity and show the well-posedness for the nonlinear problem \eqref{FBP1} without resorting to the Nash--Moser method.
The proof of this expectation is an interesting open problem for future research.

The rest of this paper is organized as follows.
Section \ref{sec:linear} is devoted to the proof of Theorem \ref{thm:linear}, that is, the well-posedness of the linearized problem in the anisotropic Sobolev spaces $H_*^m$ for any integer $m$ large enough.
To be more precise, for the effective linear problem \eqref{ELP1}, we deduce the $H_*^1$ {\it a priori} estimate in \S \ref{sec:linear2}, construct the unique solution in \S \ref{sec:linear3} by passing to the limit $\varepsilon\to0$ in $H_*^1$ from some certain $\varepsilon$--regularization, and complete the proof of Theorem \ref{thm:linear} in \S \ref{sec:linear5} with the aid of the high-order energy estimates obtained in \S \ref{sec:linear4}.
For convenience, we collect a list of notation before the statement of Theorem \ref{thm:linear} in \S \ref{sec:linear1}.
In \S \ref{sec:proof}, the existence part of Theorem \ref{thm:main} is proved by using a modified Nash--Moser
iteration scheme ({\it cf.}~\S\S \ref{sec:proof1}--\ref{sec:proof4}), while the uniqueness part follows from the $H_*^1$ energy estimate for the difference of solutions ({\it cf.}~\S \ref{sec:unique}).

\section{Well-Posedness of the Linearized Problem} \label{sec:linear}

This section is devoted to showing the tame estimates and unique solvability for the linearized problem of \eqref{NP1} in anisotropic Sobolev spaces $H_*^m$ with integer $m$ large enough.

\subsection{Main Theorem for the Linearized Problem}\label{sec:linear1}
For $T>0$, we denote $\varOmega_T:=(-\infty,T)\times \varOmega$ and $\varSigma_T:=(-\infty,T)\times\varSigma$.
Let the basic state $(\mathring{U},\mathring{\varphi})$ with $\mathring{U}:=(\mathring{q},\mathring{v},\mathring{H},\mathring{S})^{\mathsf{T}}$ be sufficiently smooth and satisfy
\begin{alignat}{3}
\label{bas1a}
&\rho_*<\inf_{\varOmega}\rho(\mathring{U})\leq \sup_{\varOmega}\rho(\mathring{U})<\rho^*
\quad &&\textrm{in } \varOmega_T,\\
\label{bas1b}
&\p_t \mathring{\varphi}=\mathring{v}\cdot \mathring{N},\quad \mathring{H}\cdot \mathring{N}=0
\quad&&\textrm{on }\varSigma_T,
\end{alignat}
where
$$\mathring{N}:=(1,-\p_2\mathring{\varphi},-\p_3\mathring{\varphi})^{\mathsf{T}}
=(1,-\mathrm{D}_{x'}\mathring{\varphi})^{\mathsf{T}}.$$
We also denote $\mathring{\varPsi}:=\chi(x_1)\mathring{\varphi}(t,x')$ and
$\mathring{\varPhi}:=x_1+\mathring{\varPsi}$
with $\chi\in C_0^{\infty}(\mathbb{R})$ satisfying \eqref{chi}.
Then $\p_1\mathring{\varPhi}\geq 1/2$ on $\varOmega_T$, provided
we without loss of generality assume that $\|\mathring{\varphi}\|_{L^{\infty}(\varSigma_T)}\leq 1/2$.
Furthermore, we suppose that
\begin{align}
\label{bas1c}
\|\mathring{U}\|_{W^{3,\infty}(\varOmega_T)}+\|\mathring{\varphi}\|_{W^{4,\infty}(\varSigma_T)}\leq K,
\end{align}
for some constant $K>0$.

The linearized operator around the basic state $(\mathring{U},\mathring{\varphi})$ for \eqref{NP1a} reads
\begin{align}
\nonumber
\mathbb{L}'(\mathring{U}, \mathring{\varPhi})(V,\varPsi)
:=\;& \left.\frac{\mathrm{d}}{\mathrm{d}\theta}
\mathbb{L}\big(\mathring{U} +\theta V ,\,\mathring{\varPhi} +\theta \varPsi \big)\right|_{\theta=0}\\
\nonumber
=\;& L(\mathring{U}, \mathring{\varPhi})V
+\mathcal{C}( \mathring{U},\mathring{\varPhi})V
-L(\mathring{U}, \mathring{\varPhi})\varPsi\frac{\p_1\mathring{U}}{\p_1 \mathring{\varPhi}},
\end{align}
where $\varPsi:=\chi(x_1)\psi(t,x'),$
the operator $L$ is given in \eqref{L.def}, and $\mathcal{C}$ is defined by
\begin{align} \nonumber 
\mathcal{C}({U},{\varPhi})V:=
\sum_{k=1}^{8}V_k\bigg(\frac{\p A_0}{\p {U_k}}({U}) \partial_t {U}
+ \frac{\p \widetilde{A}_1}{\p {U_k}}({U},{\varPhi}) \partial_1 {U}
+\sum_{i=2,3}\frac{\p A_i}{\p {U_k}}({U}) \partial_i {U}
\bigg).
\end{align}
Introducing the good unknown of {\sc Alinhac} \cite{A89MR976971}:
\begin{align} \label{good}
\dot{V}:=V-\frac{\partial_1\mathring{U} }{\partial_1 \mathring{\varPhi} }\varPsi,
\end{align}
we have ({\it cf.}\;\cite[Proposition 1.3.1]{M01MR1842775})
\begin{align}
\mathbb{L}'(\mathring{U}, \mathring{\varPhi})(V,\varPsi)
= L(\mathring{U}, \mathring{\varPhi})\dot{V}
+\mathcal{C}( \mathring{U},\mathring{\varPhi})\dot{V}
+\frac{\varPsi}{\partial_1\mathring{\varPhi}}
\partial_1\big(L(\mathring{U},\mathring{\varPhi} )\mathring{U}\big).
\label{Alinhac}
\end{align}
Regarding the linearized operator for \eqref{NP1b},
we compute from \eqref{H.cal:def} that
\begin{align}
\left.\frac{\mathrm{d}}{\mathrm{d}\theta} \mathcal{H}(\mathring{\varphi} +\theta \psi)\right|_{\theta=0}=\;&
\mathrm{D}_{x'}\cdot
\frac{\mathrm{d}}{\mathrm{d}\theta}
\bigg(\frac{\mathrm{D}_{x'}(\mathring{\varphi} +\theta \psi)}{\sqrt{1+|\mathrm{D}_{x'}(\mathring{\varphi} +\theta \psi)|^2}}\bigg)
\bigg|_{\theta=0}
\nonumber\\
=\;&\mathrm{D}_{x'}\cdot
\bigg(\frac{\mathrm{D}_{x'}\psi}{|\mathring{N}|}- \frac{\mathrm{D}_{x'}\mathring{\varphi}\cdot\mathrm{D}_{x'}\psi}{|\mathring{N}|^3}\mathrm{D}_{x'}\mathring{\varphi}\bigg)
,
\nonumber
\end{align}
and hence
\begin{align}
\nonumber
\mathbb{B}'(\mathring{U} ,\mathring{\varphi})(V,\psi)
:=&\left.\frac{\mathrm{d}}{\mathrm{d}\theta}
\mathbb{B}\big(\mathring{U} +\theta V ,\,\mathring{\varphi} +\theta \psi \big) \right|_{\theta=0}\\
=&\begin{pmatrix}
(\p_t + \mathring{v}_2 \p_2 + \mathring{v}_3 \p_3) \psi-v\cdot\mathring{N}\\[1mm]
q-\mathfrak{s} \mathrm{D}_{x'}\cdot
\bigg(\dfrac{\mathrm{D}_{x'}\psi}{|\mathring{N}|}-
\dfrac{\mathrm{D}_{x'}\mathring{\varphi}\cdot\mathrm{D}_{x'}\psi}{|\mathring{N}|^3}\mathrm{D}_{x'}\mathring{\varphi}\bigg)
\end{pmatrix}.
\label{B'.bb:def}
\end{align}

Neglecting the last term of \eqref{Alinhac} as in \cite{  CS08MR2423311,T09ARMAMR2481071,T09CPAMMR2560044,TW21MR4201624},
we write down the {\it effective linear problem}
\begin{subequations} \label{ELP1}
 \begin{alignat}{3}
 \label{ELP1a}
 &\mathbb{L}'_{e}(\mathring{U}, \mathring{\varPhi}) \dot{V}
 :=L(\mathring{U}, \mathring{\varPhi})\dot{V}
 +\mathcal{C}( \mathring{U},\mathring{\varPhi})\dot{V}=f
 &\quad &\textrm{if } x_1>0,\\
 \label{ELP1b}
 &\mathbb{B}'_e(\mathring{U}, \mathring{\varphi}) (\dot{V},\psi)=g
 &&\textrm{if } x_1=0,\\
 \label{ELP1c}
 &(\dot{V},\psi)=0   &&\textrm{if } t<0,
 \end{alignat}
\end{subequations}
where the identity
$\mathbb{B}'_e(\mathring{U}, \mathring{\varphi}) (\dot{V},\psi)
=\mathbb{B}'(\mathring{U} ,\mathring{\varphi})(V,\psi)$ results in the exact form
\begin{align}
\nonumber
\mathbb{B}'_e(\mathring{U}, \mathring{\varphi}) (\dot{V},\psi)
:=
\begin{pmatrix}
(\p_t  +\mathring{v}_2 \p_2+\mathring{v}_3 \p_3)\psi-\p_1 \mathring{v}\cdot \mathring{N}\psi-\dot{v}\cdot\mathring{N}\\[1mm]
\dot{q}+\p_1 \mathring{q}\psi-\mathfrak{s} \mathrm{D}_{x'}\cdot
\bigg(\dfrac{\mathrm{D}_{x'}\psi}{|\mathring{N}|}- \dfrac{\mathrm{D}_{x'}\mathring{\varphi}\cdot\mathrm{D}_{x'}\psi}{|\mathring{N}|^3}\mathrm{D}_{x'}\mathring{\varphi}\bigg)
\end{pmatrix}.
\end{align}

As in our previous work \cite{TW21MR4201624} for the case of zero surface tension, we will study the linearized problem \eqref{ELP1} in anisotropic Sobolev spaces $H_*^m$ that are defined below.
\vspace*{4mm}

\noindent{\bf $\bullet$ Notation.}\quad
Throughout this paper we adopt the following notation.

\begin{itemize}
\item[(i)] We use letter $C$ to denote any universal positive constant.
Symbol $C(\cdot)$ denotes any generic positive constant depending on the quantities listed in the parenthesis.
We employ $X\lesssim Y$ or $Y\gtrsim X$ to denote the statement that $X \leq CY$ for some universal constant $C>0$.

\item[(ii)] Recall that $\nabla:=(\p_1,\p_2,\p_3)^{\mathsf{T}}$ and $\mathrm{D}_{x'}:=(\p_2,\p_3)^{\mathsf{T}}$ with $\p_i:=\frac{\p}{\p x_i}$ for $i=1,2,3$.
The time derivative $\p_t:=\frac{\p}{\p t}$ will be denoted in many cases by $\p_0:=\frac{\p}{\p t}$.
For any multi-index $\alpha:=(\alpha_{2},\alpha_{3})\in\mathbb{N}^{2}$ and $m\in\mathbb{N}$,
we define
\begin{align}
\mathrm{D}_{x'}^{\alpha}:=\p_2^{\alpha_2} \p_3^{\alpha_3},
\ \
|\alpha|:=\alpha_{2}+\alpha_{3},\ \
\mathrm{D}_{x'}^{m}:=
(\p_2^m,\p_2^{m-1}\p_3,\ldots, \p_3^m)^{\mathsf{T}}.
\label{Dx'}
\end{align}

\item[(iii)]
Symbol $\mathrm{D}$ will be employed to denote the space-time gradient
\begin{align*}
	\mathrm{D}:=(\p_t,\p_1,\p_2,\p_3)^{\mathsf{T}}.
\end{align*}
For $m\in\mathbb{N}$, we denote by $\mathring{\rm c}_m$  a generic and smooth matrix-valued function of
$\{(\mathrm{D}^{\alpha} \mathring{V},\mathrm{D}^{\alpha}\mathring{\varPsi}):|\alpha|\leq m\}$,
where $\mathrm{D}^{\alpha}:=\p_t^{\alpha_0} \p_1^{\alpha_1}\p_2^{\alpha_2} \p_3^{\alpha_3} $ and $\alpha:=(\alpha_0,\ldots,\alpha_{3})\in\mathbb{N}^{4}$ with the convention $|\alpha|:=\alpha_0+\cdots+\alpha_{3}$.
The exact form of $\mathring{\rm c}_m$ may vary at different places.

\item[(iv)] We employ symbol $\mathrm{D}_*^{\alpha}$ to mean that
\begin{align} \label{D*}
\mathrm{D}_*^{\alpha}:=
\p_t^{\alpha_0} (\sigma \p_1)^{\alpha_1}\p_2^{\alpha_2}  \p_3^{\alpha_3} \p_1^{\alpha_{4}},
\quad \alpha:=(\alpha_0,\ldots,\alpha_{4})\in\mathbb{N}^{5},
\end{align}
with $\langle \alpha \rangle :=|\alpha|+\alpha_{4}$ and $|\alpha|:= \alpha_{0}+\cdots+\alpha_4$,
where $\sigma=\sigma(x_1)$ is an increasing smooth function on $[0,+\infty)$ such that
$\sigma(x_1)=x_1$ for $0\leq x_1\leq 1/2$ and $\sigma(x_1)=1$ for $x_1\geq 1$.

\item[(v)] For $m\in\mathbb{N}$ and $I\subset \mathbb{R}$, we define the function space $H_*^{m}(I\times \varOmega)$ as
\begin{align*}
&H_*^m(I\times\varOmega):=
\{ u\in L^2(I\times\varOmega):\, \mathrm{D}_*^{\alpha} u\in L^2(I\times\varOmega) \textrm{ for } \langle \alpha \rangle\leq m  \},
\end{align*}
equipped with the norm ${\|}\cdot{\|}_{H^m_*(I\times \varOmega)}$, where
\begin{align} \label{norm.def}
{\|}u{\|}_{H^m_*(I\times \varOmega)}^2:=
\sum_{\langle \alpha\rangle\leq m} \|\mathrm{D}_*^{\alpha} u\|_{L^2(I \times\varOmega)}^2.
\end{align}
We will abbreviate
\begin{align} \label{C.rm:def}
\|{u}\|_{m,*,t}:={\|}u{\|}_{H^m_*( \varOmega_t)},\quad
\mathring{\rm C}_{m}:=1+{\|}(\mathring{V},\mathring{\varPsi}){\|}_{m,*,T}^2.
\end{align}
Clearly,
$H^m(I\times \varOmega)\hookrightarrow H_*^m(I\times \varOmega) \hookrightarrow H^{\lfloor m/2\rfloor}(I\times \varOmega)$ for all $m\in \mathbb{N}$  and $I\subset \mathbb{R}.$
\end{itemize}

The well-posedness for the effective linear problem \eqref{ELP1} are provided in the following theorem.

\begin{theorem}
\label{thm:linear}
Let $K_0>0$ and $m\in\mathbb{N}$ with $m\geq 6$.
Then there exist constants $T_0>0$ and $C(K_0)>0$ such that if for some $0<T\leq T_0$,
the basic state $(\mathring{U}, \mathring{\varphi})$ satisfies \eqref{bas1a}--\eqref{bas1c},
the source terms $f\in H_*^{m}(\varOmega_T)$, $g\in H^{m+1}(\varSigma_T)$ vanish in the past,
and $\mathring{V}:=\mathring{U}-\widebar{U}\in H_*^{{m+4}}(\varOmega_T)$, $\mathring{\varphi}\in H^{{m+4}}(\varSigma_T)$ satisfy
\begin{align}
\label{H:thm.linear}
{\|}\mathring{V}{\|}_{{10},*,T}+\|\mathring{\varphi}\|_{H^{{10}}(\varSigma_T)}\leq {K}_0,
\end{align}
then the problem \eqref{ELP1} has a unique solution $(\dot{V},\psi)\in H_*^{m}(\varOmega_T)\times H^{m}(\varSigma_T)$ satisfying the tame estimate
\begin{align}
&{\|}(\dot{V},\varPsi){\|}_{m,*,T}+\|(\psi,\mathrm{D}_{x'}\psi)\|_{H^{m}(\varSigma_T)}
\nonumber \\
&
\quad \leq   C(K_0)\Big\{  {\|}f{\|}_{m,*,T} +\|g\|_{H^{m+1}(\varSigma_T)}
\nonumber \\
&\qquad \qquad\quad \ \  +{\|}(\mathring{V},\mathring{\varPsi}){\|}_{{m+4},*,T}\left({\|}f{\|}_{6,*,T}+\|g\|_{H^{7}(\varSigma_T)}\right) \Big\}.
  \label{tame}
\end{align}
\end{theorem}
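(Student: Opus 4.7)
The overall plan is to follow the roadmap outlined in the introduction: first obtain a basic a priori estimate for \eqref{ELP1} in the anisotropic space $H_*^1$ by exploiting the regularizing effect of surface tension on the boundary, next construct the unique solution through an $\varepsilon$-regularization combined with a duality argument, and finally upgrade the a priori estimate to the tame estimate \eqref{tame} by a commutator analysis. Since the Rayleigh--Taylor sign condition is dropped here, no $L^2$ estimate for $\dot V$ alone can be closed; instead, the positive surface-tension term $\mathfrak s\,\mathcal H(\varphi)$ in \eqref{ELP1b} must be used to recover boundary control at the level of $\mathrm{D}_{x'}\psi$ and $\mathrm{D}_{x'}^2\psi$.

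For the $H_*^1$ basic estimate, I would multiply \eqref{ELP1a} by $\dot V$ in $L^2(\varOmega_t)$; the symmetry of $A_0,\widetilde A_1,A_2,A_3$ together with hyperbolicity produces the interior quadratic energy modulo a boundary integral on $\varSigma_t$. Inserting the boundary conditions \eqref{ELP1b} and integrating by parts in $x'$ against the mean-curvature operator gives a nonnegative term controlling $\mathfrak s\,\|\mathrm{D}_{x'}\psi\|_{L^2(\varSigma_t)}^2$, at the cost of lower-order terms involving $\mathring\varphi$. Applying the tangential-spatial derivatives $\mathrm D_{x'}$ to \eqref{ELP1} and repeating the argument produces coercivity of $\mathfrak s\,\|\mathrm{D}_{x'}^2\psi\|_{L^2(\varSigma_t)}^2$. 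The time derivative $\p_t\psi$ is the delicate point, because $\mathcal H(\varphi)$ is purely spatial. I would extract $\p_t\psi$ from the first component of \eqref{ELP1b}, which reduces its estimate to controlling $\dot v\cdot\mathring N$ on the boundary; this in turn produces the boundary term $\mathcal J_{1a}$ in \eqref{J1.est1}, which I would rewrite as a volume integral of $\p_1$ of the noncharacteristic variable $W_2$ and then use the interior equations \eqref{ELP1a} to estimate the normal derivative of $W_2$ in $L^2(\varOmega_t)$ in terms of tangential derivatives of $\dot V$ already controlled. Combining these yields the $H_*^1$ a priori estimate.

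For existence, I would design an $\varepsilon$-regularization of \eqref{ELP1}, presumably by modifying the boundary operator $\mathbb B'_e$ and/or the first equation of the symmetric hyperbolic system \eqref{ELP1a} so that the regularized problem is noncharacteristic or strictly dissipative. For such a problem one obtains a standard $L^2$ estimate with a constant $C(\varepsilon)$ that blows up as $\varepsilon\to 0$; the same argument applied to the adjoint problem yields an $L^2$ estimate for the dual, so that the Lax--Phillips/Chazarain--Piriou duality argument produces an $L^2$ solution for each fixed $\varepsilon>0$. The crux is then to revisit the $H_*^1$ argument described above and verify that the extra $\varepsilon$-terms give either a sign-favorable contribution or a contribution that can be absorbed, yielding an $H_*^1$ bound uniform in $\varepsilon$. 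A weak-compactness passage to the limit $\varepsilon\to 0^+$ then produces a solution $(\dot V,\psi)\in H_*^1(\varOmega_T)\times H^2(\varSigma_T)$ of \eqref{ELP1}, uniqueness being a direct consequence of the a priori estimate applied to the difference of two solutions.

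Finally, to obtain \eqref{tame}, I would apply the tangential operators $\mathrm D_*^\alpha$ with $\langle\alpha\rangle\le m$ to \eqref{ELP1a} and the operator $\mathrm D^\alpha$ with $|\alpha|\le m$ to \eqref{ELP1b}, producing an analogous problem for $\mathrm D_*^\alpha \dot V$ with source terms given by commutators $[\mathrm D_*^\alpha,L(\mathring U,\mathring\varPhi)]\dot V$ and boundary commutators involving the mean-curvature linearization. Standard Moser-type product and composition estimates in $H_*^m$ bound these commutators by a fixed loss of three or four derivatives on the basic state, which is the source of the $\|(\mathring V,\mathring\varPsi)\|_{m+4,*,T}$ factor multiplying the lower-order $H_*^6/H^7$ norms of $(f,g)$. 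The surface-tension-enhanced boundary regularity supplies the additional two tangential derivatives on $\psi$ needed to close the iteration, and those intermediate estimates from \cite{TW21MR4201624} that are independent of the sign condition can be imported verbatim for the tangential components. Induction over $m$ starting from the $H_*^1$ estimate then delivers \eqref{tame}. The main obstacle throughout is the $H_*^1$ control of $\p_t\psi$: the spatial nature of $\mathcal H(\varphi)$ forces the detour through the noncharacteristic variable $W_2$ and the interior equations, and getting this reduction to close uniformly in $\varepsilon$ is the technically delicate point on which the whole scheme rests.
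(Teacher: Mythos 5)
Your proposal follows essentially the same route as the paper: homogenize, exploit the coercive surface-tension contribution to control $\mathrm{D}_{x'}\psi$ and $\mathrm{D}_{x'}^2\psi$, recover $\p_t\psi$ by converting the difficult boundary term $\mathcal{J}_{1a}$ into a volume integral of $\p_1 W_2$ and invoking the interior equations, build existence through an $\varepsilon$-regularization (the paper's specific choice is $-\varepsilon\bm{J}\p_1 W$ in the interior and $\varepsilon(\p_2^4+\p_3^4)\psi$ on the boundary) plus a Lax--Phillips duality argument and a uniform-in-$\varepsilon$ $H_*^1$ bound, and close the tame estimate via $\mathrm{D}_*^\alpha$ commutators and Moser calculus. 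The only small items you leave implicit are the partial homogenization by the lifting $V_\natural$ and the change of unknown to $W=J(\mathring\varPhi)^{-1}V$ that isolates the noncharacteristic components, and the bootstrap step in which the estimate at level $m=6$ is fed back into the general-$m$ estimate (rather than a full induction over $m$); these are technical conveniences that fit within the same overall scheme.
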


The above assumption that the source terms $f$ and $g$ vanish in the past corresponds to the nonlinear problem with zero initial data. The case of general initial data will be reduced in the subsequent nonlinear analysis.

\subsection{$H_*^1$ \textit{a priori} Estimate}\label{sec:linear2}

This subsection is devoted to obtaining the {\it a priori} estimate in $H_*^1$ for solutions of the linearized problem \eqref{ELP1} by exploiting the stabilization effect of the surface tension on the evolution of the interface. For clear presentation, we divide this subsection into five parts.

\subsubsection{Partial Homogenization}
It is convenient to reduce the problem \eqref{ELP1} to homogeneous boundary conditions.
As in \cite[Section 3.3]{TW21MR4201624}, there exists a function $V_{\natural}\in H_*^{m+2}(\varOmega_T)$ vanishing in the past such that
\begin{align} \label{V.natural}
\mathbb{B}'_e(\mathring{U}, \mathring{\varphi}) (V_{\natural},0)\big|_{\varSigma_T}=g,\ \
{\|}V_{\natural}{\|}_{s+2,*,T}\lesssim \|g\|_{H^{s+1}(\varSigma_T)}
\ \   \textrm{for }s=0,\ldots,m.
\end{align}
Consequently, the vector $V_{\flat}:=\dot{V}-V_{\natural}$ solves
\begin{subequations} \label{ELP2}
 \begin{alignat}{3}
 \label{ELP2a}
 &\mathbb{L}'_{e}(\mathring{U}, \mathring{\varPhi}) {V}=\tilde{f}
 := f-\mathbb{L}_{e}'(\mathring{U},\mathring{\varPhi})V_{\natural}
 &\qquad &\textrm{if } x_1>0,\\
 \label{ELP2b}
 &\mathbb{B}'_e(\mathring{U}, \mathring{\varphi}) ({V},\psi)=0
 &&\textrm{if } x_1=0,\\
 \label{ELP2c}
 &({V},\psi)=0   &&\textrm{if } t<0,
 \end{alignat}
\end{subequations}
where we have dropped subscript ``$\flat$'' for simplicity of notation.

To separate the noncharacteristic variables from others for the problem \eqref{ELP2},
we introduce the new unknown
\begin{align} \nonumber
W:=(q, v_1-\p_2\mathring{\varPhi}v_2-\p_3\mathring{\varPhi}v_3, v_2, v_3, H_1,H_2,H_3, S)^{\mathsf{T}}=J( \mathring{\varPhi})^{-1}V,
\end{align}
with
\begin{align}
\label{J.def}
& {J}(\varPhi):=
\begin{pmatrix}1 & 0 & 0 & 0 & 0 & 0 & 0 & 0  \\
0 & 1 & \partial_2 {\varPhi}  & \partial_3 {\varPhi}  & 0 & 0 & 0 & 0  \\
0 & 0 & 1 & 0 & 0 & 0 & 0 & 0  \\
0 & 0 & 0 & 1 & 0 & 0 & 0 & 0  \\
0 & 0 & 0 & 0 & 1& 0 & 0 & 0  \\
0& 0 & 0 & 0 &0  & 1& 0 & 0  \\
0& 0 & 0 & 0 & 0  & 0 & 1& 0  \\
\w0 & \w0 & \w0 & \w0 & \w0 & \w0 & \w0 & 1
\end{pmatrix}.
\end{align}
For $i=0,2,3$, we denote
\begin{align*}
{\bm{A}}_1:=J( \mathring{\varPhi})^{\mathsf{T}}\widetilde{A}_1(\mathring{U},\mathring{\varPhi})J( \mathring{\varPhi}),\
{\bm{A}}_4:=J( \mathring{\varPhi})^{\mathsf{T}}\mathbb{L}_e'(\mathring{U},\mathring{\varPhi})J( \mathring{\varPhi}),\
{\bm{A}}_{i}:=J( \mathring{\varPhi})^{\mathsf{T}}{A}_{i}(\mathring{U})J( \mathring{\varPhi}).
\end{align*}
Then we reformulate the problem \eqref{ELP2} into
\begin{subequations}
 \label{ELP3}
 \begin{alignat}{3}
 \label{ELP3a}
 &{{\bf L}}W:=\sum_{i=0}^3{\bm{A}}_i\p_i W +{\bm{A}}_4 W =\bm{f}
 &\quad &\textnormal{in }\varOmega_T,\\
 \label{ELP3b}
 &W_2=(\p_t+\mathring{v}_2\p_2+\mathring{v}_3\p_3)\psi-\p_1\mathring{v}\cdot\mathring{N} \psi
 =:{\rm B}\psi
 &\quad &\textnormal{on }\varSigma_T,\\[1mm]
 \label{ELP3c}
&W_1=-\p_1\mathring{q} \psi +\mathfrak{s}\mathrm{D}_{x'}\cdot
\bigg(\dfrac{\mathrm{D}_{x'}\psi}{|\mathring{N}|}-
\dfrac{\mathrm{D}_{x'}\mathring{\varphi}\cdot\mathrm{D}_{x'}\psi}{|\mathring{N}|^3}\mathrm{D}_{x'}\mathring{\varphi}
\bigg)
 &\quad &\textnormal{on }\varSigma_T,\\[1mm]
 \label{ELP3d}
 &(W,\psi)=0 &\quad &\textnormal{if }t<0,
 \end{alignat}
\end{subequations}
with $\p_0:=\p/\p t$ and $\bm{f}:=J( \mathring{\varPhi})^{\mathsf{T}}\tilde{f}$.
In view of \eqref{bas1b}, we calculate
\begin{align}
\label{decom}
{\bm{A}}_1\big|_{x_1=0}=
\begin{pmatrix}
0 & 1 &0 \\
1 & 0 & 0\\
0 & 0 & O_6
\end{pmatrix}
=:{\bm{A}}_1^{(1)}.
\end{align}
Set ${\bm{A}}_1^{(0)}:={\bm{A}}-{\bm{A}}_1^{(1)}$
so that ${\bm{A}}_1^{(0)}|_{x_1=0}=0.$
According to the kernel of ${\bm{A}}_1|_{x_1=0}$, we denote by $W_{\rm nc}:=(W_1,W_2)^{\mathsf{T}}$ the noncharacteristic part of the unknown $W$.

\subsubsection{$L^2$ Estimate of $W$}

We derive the $L^2$ energy estimate for solutions of \eqref{ELP3} as follows.
Taking the scalar product of the problem \eqref{ELP3a} with $W$ leads to
\begin{align} \label{est1a}
\int_{\varOmega} {\bm{A}}_0W\cdot W(t,x)\d x-\int_{\varSigma_t} {\bm{A}}_1W\cdot W 
\leq C(K)\|(\bm{f},W)\|_{L^2(\varOmega_t)}^2.
\end{align}
From \eqref{decom} and \eqref{ELP3b}--\eqref{ELP3c}, we have
\begin{align}
- {\bm{A}}_1&W\cdot W=-2W_1W_2=-2W_1  {\rm B}\psi
\nonumber\\
=\;&
\p_t\bigg\{\p_1\mathring{q} \psi^2 +\mathfrak{s}\bigg(\dfrac{|\mathrm{D}_{x'}\psi|^2}{|\mathring{N}|}- \dfrac{|\mathrm{D}_{x'}\mathring{\varphi}\cdot\mathrm{D}_{x'}\psi|^2}{|\mathring{N}|^3}\bigg)    \bigg\}
+\mathfrak{s}\mathring{\rm c}_2
\mathrm{D}_{x'}\psi
 \cdot \begin{pmatrix}
\psi\\ \mathrm{D}_{x'}\psi
\end{pmatrix}
\nonumber \\
\nonumber
&+\sum_{i=2,3}\p_i\bigg\{
\p_1\mathring{q} \mathring{v}_i \psi^2 +\mathfrak{s} \mathring{v}_i \bigg(\dfrac{|\mathrm{D}_{x'}\psi|^2}{|\mathring{N}|}-
\dfrac{|\mathrm{D}_{x'}\mathring{\varphi}\cdot\mathrm{D}_{x'}\psi|^2}{|\mathring{N}|^3}
\bigg)\bigg\}
+\mathring{\rm c}_2  \psi^2
\\
&-2\mathfrak{s} \mathrm{D}_{x'}\cdot
\bigg\{\bigg(\dfrac{\mathrm{D}_{x'}\psi}{|\mathring{N}|}-
\dfrac{\mathrm{D}_{x'}\mathring{\varphi}\cdot\mathrm{D}_{x'}\psi}{|\mathring{N}|^3}\mathrm{D}_{x'}\mathring{\varphi}
\bigg){\rm B}\psi
\bigg\}
\quad \textrm{on } \varSigma_T,
\label{iden1}
\end{align}
where $\mathring{\rm c}_s$ denotes a generic and smooth matrix-valued function of
$\{(\mathrm{D}^{\alpha} \mathring{V},\mathrm{D}^{\alpha}\mathring{\varPsi}):|\alpha|\leq s\}$
for any $s\in\mathbb{N}$ whose exact form may change from line to line.
We substitute the above identities into \eqref{est1a} to get
\begin{align}
\nonumber
&\int_{\varOmega} {\bm{A}}_0W\cdot W(t,x)\d x+\int_{\varSigma} \bigg(\p_1\mathring{q} \psi^2+ \mathfrak{s}\dfrac{|\mathrm{D}_{x'}\psi|^2}{|\mathring{N}|^3}\bigg)\d x' \\
&\qquad\quad\leq C(K)\Big(
\|(\bm{f},W)\|_{L^2(\varOmega_t)}^2+ \|(\psi,\sqrt{\mathfrak{s}}\mathrm{D}_{x'}\psi)\|_{L^2(\varSigma_t)}^2
\Big).
\label{est1b}
\end{align}
If the generalized Rayleigh--Taylor sign condition $\p_1\mathring{q} \gtrsim 1$ were assumed, then
we could apply Gr\"{o}nwall's inequality to \eqref{est1b} and obtain the {\it uniform-in-$\mathfrak{s}$ estimate}
\begin{align}
\|W(t)\|_{L^2(\varOmega)}^2+\|(\psi,\sqrt{\mathfrak{s}}\mathrm{D}_{x'}\psi)(t)\|_{L^2(\varSigma)}^2
\leq C(K)  \|\bm{f}\|_{L^2(\varOmega_t)}^2.
\label{est1c}
\end{align}
But in this paper, we focus on the case of positive surface tension and aim to release the sign condition.
For this purpose, we fix the coefficient $\mathfrak{s}>0$ and use integration by parts to get
\begin{align} \label{es1}
\|\psi(t)\|_{L^2(\varSigma)}^2=2\int_{\varSigma_t}\psi\p_t\psi
\lesssim 
\|(\psi,\p_t\psi)\|_{L^2(\varSigma_t)}^2.
\end{align}
Combining \eqref{est1b} and \eqref{es1} gives
\begin{align}
\nonumber
&\|W(t)\|_{L^2(\varOmega)}^2+\|(\psi,\mathrm{D}_{x'}\psi)(t)\|_{L^2(\varSigma)}^2 \\
&\qquad \leq 
C(K)\Big(  \|(\bm{f},W)\|_{L^2(\varOmega_t)}^2+\|(\psi,\mathrm{D}_{x'}\psi,\p_t\psi)\|_{L^2(\varSigma_t)}^2\Big),
\nonumber 
\end{align}
from which we deduce
\begin{align}
\label{est1d}
\|W(t)\|_{L^2(\varOmega)}^2+\|(\psi,\mathrm{D}_{x'}\psi)(t)\|_{L^2(\varSigma)}^2
\leq 
C(K)\Big(   \|\bm{f}\|_{L^2(\varOmega_t)}^2+\|\p_t\psi\|_{L^2(\varSigma_t)}^2\Big).
\end{align}

\subsubsection{$L^2$ Estimate of $\mathrm{D}_{x'}W$}

Let us proceed to close the energy estimate in $H_*^1$.
To this end, we set $k\in\{0,2,3\}$. Applying the operator $\p_k$ to the interior equations \eqref{ELP3a} yields
\begin{align}
\sum_{i=0}^3{\bm{A}}_i\p_i \p_k W  =\p_k\bm{f}
-\p_k({\bm{A}}_4 W)-\sum_{i=0}^3\p_k{\bm{A}}_i\p_i  W.
\label{equ:k}
\end{align}
It follows from \eqref{decom} that $\p_k {\bm{A}}_1=0$ on $\varSigma_T$, and hence
\begin{align} \label{es2}
\|\p_k {\bm{A}}_1(x_1)\|_{L^{\infty}((-\infty,T)\times\mathbb{R}^{2})}
\leq C(K)\sigma(x_1)
\ \ \textrm{for } x_1\geq 0\textrm{ and } k\in\{0,2,3\}.
\end{align}
In view of \eqref{decom} and \eqref{es2}, we take the scalar product of \eqref{equ:k} with $\p_k W$ to get
\begin{align}
\label{est2a}
\int_{\varOmega} {\bm{A}}_0\p_kW\cdot\p_k W(t,x)\d x
-2\int_{\varSigma_t} \p_kW_1 \p_kW_2
\leq C(K)\|(\bm{f},W)\|_{1,*,t}^2,
\end{align}
where the norm $\|\,{\cdot}\,\|_{1,*,t}:=\|\,{\cdot}\,\|_{H_*^1( \varOmega_t)}$ is defined by \eqref{D*}--\eqref{C.rm:def}.
We utilize \eqref{ELP3b}--\eqref{ELP3c} to obtain that on $\varSigma_T$,
\begin{alignat}{3}
-2\p_kW_1 \p_kW_2=\;&-2\p_kW_1\p_k{\rm B}\psi,
\label{iden2}
\end{alignat}
and
\begin{alignat}{3}
-2\p_kW_1\p_k{\rm B}\psi=\;
&
\p_t
\left\{\p_1\mathring{q} (\p_k\psi)^2 \right\}
+\sum_{i=2,3}\p_i\left\{
\p_1\mathring{q} \mathring{v}_i  (\p_k\psi)^2 \right\}
\nonumber \\
&+2\p_k\left(\p_k\p_1\mathring{q} \psi {\rm B}\psi \right)
+ \sum_{  |\alpha|\leq 1}
\mathring{\rm c}_3
\begin{pmatrix}
\psi\\ \p_k \psi
\end{pmatrix} \cdot \begin{pmatrix}
\mathrm{D}_{x'}^{\alpha}\psi \\ \p_t \psi
\end{pmatrix}
\nonumber \\
\nonumber
&
-2\mathfrak{s} \mathrm{D}_{x'}\cdot \bigg\{ \p_k  \bigg(\dfrac{\mathrm{D}_{x'}\psi}{|\mathring{N}|}- \dfrac{\mathrm{D}_{x'}\mathring{\varphi}\cdot\mathrm{D}_{x'}\psi}{|\mathring{N}|^3}\mathrm{D}_{x'}\mathring{\varphi}\bigg)\p_k {\rm B}\psi \bigg\}
\\
&+\underbrace{2\mathfrak{s}\p_k\bigg(\dfrac{\mathrm{D}_{x'}\psi}{|\mathring{N}|}- \dfrac{\mathrm{D}_{x'}\mathring{\varphi}\cdot\mathrm{D}_{x'}\psi}{|\mathring{N}|^3}\mathrm{D}_{x'}\mathring{\varphi} \bigg)\cdot  \mathrm{D}_{x'}\p_k {\rm B}\psi}_{\mathcal{T}_k},
\label{iden3}
\end{alignat}
where $\mathrm{D}_{x'}^{\alpha}:=\p_{2}^{\alpha_{2}}\p_{3}^{\alpha_{3}}$ for $\alpha=(\alpha_{2},\alpha_{3})\in\mathbb{N}^2$.
A lengthy but straightforward calculation gives
\begin{align}
\nonumber
\mathcal{T}_k=\;
&
\mathfrak{s} \p_t\bigg(  \dfrac{|\mathrm{D}_{x'}\p_k\psi|^2}{|\mathring{N}|}- \dfrac{|\mathrm{D}_{x'}\mathring{\varphi}\cdot\mathrm{D}_{x'}\p_k\psi|^2}{|\mathring{N}|^3}
+\mathring{\rm c}_2 \mathrm{D}_{x'}\psi\cdot \mathrm{D}_{x'}\p_k\psi
\bigg)
\\[1.5mm]
\nonumber
&+\sum_{i=2,3}\p_i\bigg\{
\mathfrak{s} \mathring{v}_i \bigg(\dfrac{|\mathrm{D}_{x'}\p_k\psi|^2}{|\mathring{N}|}- \dfrac{|\mathrm{D}_{x'}\mathring{\varphi}\cdot\mathrm{D}_{x'}\p_k\psi|^2}{|\mathring{N}|^3}
+\mathring{\rm c}_2 \mathrm{D}_{x'}\psi \cdot\mathrm{D}_{x'}\p_k\psi  \bigg) \bigg\}\\
&
+ \sum_{  |\alpha|\leq 2}\mathring{\rm c}_3
\begin{pmatrix}
\mathrm{D}_{x'}\psi \\  \p_k \mathrm{D}_{x'}\psi
\end{pmatrix} \cdot \begin{pmatrix}
\p_k\psi\\  \mathrm{D}_{x'}^{\alpha}\psi\\ \p_t\mathrm{D}_{x'}\psi
\end{pmatrix}
\quad \textrm{for }k=0,2,3.
\label{iden4}
\end{align}
Plug \eqref{iden2}--\eqref{iden3} into \eqref{est2a} with $k=2,3$, and use \eqref{iden4} to get
\begin{align}
\nonumber
\sum_{k=2,3}
\bigg(\int_{\varOmega} {\bm{A}}_0\,&\p_kW\cdot\p_k W(t,x)\d x
+\mathfrak{s}\int_{\varSigma}  \dfrac{|\mathrm{D}_{x'}\p_k\psi|^2}{|\mathring{N}|^3} \d x' \bigg)\\
\nonumber
 \leq C(K)\bigg(&\|(\bm{f},W)\|_{1,*,t}^2+\sum_{k=2,3}\int_{\varSigma}\left(|\p_k\psi|^2+|\mathrm{D}_{x'}\psi|| \mathrm{D}_{x'}\p_k \psi|  \right)\d x' \\
&
+   \boldsymbol{\epsilon} \| \p_t \psi \|_{L^2(\varSigma_t)}^2
+   C(\boldsymbol{\epsilon})\sum_{|\alpha|\leq 2} \|(\mathrm{D}_{x'}^{\alpha}\psi, \mathrm{D}_{x'}\p_t\psi)\|_{L^2(\varSigma_t)}^2
\bigg)
\nonumber 
\end{align}
for all $\boldsymbol{\epsilon}>0$.
Here we have applied Young's inequality with a constant $\boldsymbol{\epsilon}$.
Below we will always use the letter $\boldsymbol{\epsilon}$ to denote such temporary constants in analogous situations.
Using Cauchy's inequality and the basic estimate
\begin{align}
\|\mathrm{D}_{x'}\psi(t)\|_{L^2(\varSigma)}\lesssim
\|(\mathrm{D}_{x'}\psi,\mathrm{D}_{x'}\p_t\psi)\|_{L^2(\varSigma_t)},
\label{est:bas1}
\end{align}
we have
\begin{align}
&\|\mathrm{D}_{x'}W(t)\|_{L^2(\varOmega)}^2+
\|\mathrm{D}_{x'}^{2}\psi(t)\|_{L^2(\varSigma)}^2
\nonumber\\
&\quad \leq
C(K)
\bigg(\|(\bm{f},W)\|_{1,*,t}^2+  \sum_{|\alpha|\leq 2}
\|(\mathrm{D}_{x'}^{\alpha}\psi, \p_t \psi,\mathrm{D}_{x'}\p_t\psi)\|_{L^2(\varSigma_t)}^2
\bigg).
\label{est2b}
\end{align}

\subsubsection{$L^2$ Estimate of $\p_t W$}

For $k=0$, we use \eqref{ELP3b}--\eqref{ELP3c} to find
\begin{align}
-2\int_{\varSigma_t} \p_tW_1 \p_tW_2
= \underbrace{2\int_{\varSigma_t} \p_1\mathring{q} \p_t\psi\p_tW_2}_{
\mathcal{J}_1}+\underbrace{2\int_{\varSigma_t} \p_t\p_1\mathring{q} \psi\p_tW_2}_{\mathcal{J}_2}
+\int_{\varSigma_t} \mathcal{T}_0,
\label{est3a}
\end{align}
where $\mathcal{T}_0$ is defined in \eqref{iden3}.
By virtue of \eqref{ELP3b}, we infer
\begin{align}
\mathcal{J}_1=
\underbrace{2\int_{\varSigma_t} \p_1\mathring{q} W_2 \p_tW_2}_{\mathcal{J}_{1a}}
+\underbrace{2\int_{\varSigma_t} \p_1\mathring{q}  \p_tW_2(-\mathring{v}_2\p_2\psi-\mathring{v}_3\p_3\psi+\p_1\mathring{v}\cdot\mathring{N} \psi) }_{\mathcal{J}_{1b}}.
\label{J1.est1}
\end{align}
Passing to the volume integral yields
\begin{align*}
\mathcal{J}_{1a}=\;&-2\int_{\varOmega_t}\p_1(\p_1\mathring{q} W_2 \p_tW_2).
\end{align*}
Consequently, we have
\begin{align}
\mathcal{J}_{1a}
=\,&-2\int_{\varOmega}\p_1\mathring{q} W_2 \p_1W_2\d x
-2\int_{\varOmega_t}\left(\p_1^2\mathring{q} W_2 \p_tW_2 {\,-\,} \p_1\p_t\mathring{q} W_2 \p_1W_2\right)
\nonumber \\[1mm]
\geq \,&
-\boldsymbol{\epsilon}\|\p_1W_2(t)\|_{L^2(\varOmega)}^2
-C(K)C(\boldsymbol{\epsilon}) \| (W_2,\p_tW_2,\p_1W_2)\|_{L^2(\varOmega_t)}^2
\label{J1a.est1}
\end{align}
for all $\boldsymbol{\epsilon}>0$.
It follows from integration by parts that
\begin{align}
\mathcal{J}_{1b}+\mathcal{J}_{2}
&=2\int_{\varSigma}W_2\left\{\p_1\mathring{q}  (-\mathring{v}_2\p_2\psi-\mathring{v}_3\p_3\psi+\p_1\mathring{v}\cdot\mathring{N} \psi)
+\p_t\p_1\mathring{q} \psi
\right\}\d x'
\nonumber \\
& \quad -2\int_{\varSigma_t}W_2\p_t\left\{\p_1\mathring{q}  (-\mathring{v}_2\p_2\psi-\mathring{v}_3\p_3\psi+\p_1\mathring{v}\cdot\mathring{N} \psi)
+\p_t\p_1\mathring{q} \psi
\right\}
\nonumber \\
&\geq
-\|W_2(t)\|_{L^2(\varSigma)}^2-\|W_2\|_{L^2(\varSigma_t)}^2
\nonumber\\[1mm]
&\quad
-C\sum_{ i=0,1}\|\p_t^i\big\{\p_1\mathring{q}  (-\mathring{v}_2\p_2\psi-\mathring{v}_3\p_3\psi+\p_1\mathring{v}\cdot\mathring{N} \psi)
+\p_t\p_1\mathring{q} \psi
\big\} \|_{L^2(\varSigma_t)}^2
\nonumber\\
&\geq
-
\|W_2(t)\|_{L^2(\varSigma)}^2
-C(K)\sum_{|\alpha|\leq 2} \|(W_2,\mathrm{D}_{x'}^{\alpha}\psi, \p_t \psi,\mathrm{D}_{x'}\p_t\psi)\|_{L^2(\varSigma_t)}^2.
\label{J1b.est1}
\end{align}
By virtue of \eqref{iden4} and \eqref{est:bas1}, we get
\begin{align}
\int_{\varSigma_t} \mathcal{T}_0
\geq
\;&\dfrac{\mathfrak{s}}{2}\int_{\varSigma}
 \dfrac{|\mathrm{D}_{x'}\p_t\psi|^2}{|\mathring{N}|^3}\d x'
-C(K)\sum_{|\alpha|\leq 2} \|(\mathrm{D}_{x'}^{\alpha}\psi, \p_t \psi,\mathrm{D}_{x'}\p_t\psi)\|_{L^2(\varSigma_t)}^2.
\label{T0.est}
\end{align}
Substitute \eqref{est3a} into \eqref{est2a} and use \eqref{J1.est1}--\eqref{T0.est}
to obtain
\begin{align}
&\|\p_tW(t)\|_{L^2(\varOmega)}^2+\| \mathrm{D}_{x'}\p_t\psi(t)\|_{L^2(\varSigma)}^2
\nonumber\\
 & \leq
 C(K)\bigg( \|W_2(t)\|_{L^2(\varSigma)}^2 +
 \boldsymbol{\epsilon}\|\p_1W_2(t)\|_{L^2(\varOmega)}^2
+C(\boldsymbol{\epsilon})\|\p_1W_2\|_{L^2(\varOmega_t)}^2
\nonumber\\
&\qquad\qquad \
+C(\boldsymbol{\epsilon})\|(\bm{f},W)\|_{1,*,t}^2
+ \sum_{|\alpha|\leq 2} \|(W_2,\mathrm{D}_{x'}^{\alpha}\psi, \p_t \psi,\mathrm{D}_{x'}\p_t\psi)\|_{L^2(\varSigma_t)}^2
\bigg).
\label{est3b}
\end{align}
For the first term on the right-hand side, a direct computation gives
\begin{align}
\|W_2(t)\|_{L^2(\varSigma)}^2
=\;&-2\int_{\varOmega}W_2\p_1W_2
\lesssim \boldsymbol{\epsilon} \|\p_1W_2(t)\|_{L^2(\varOmega)}^2+{\boldsymbol{\epsilon}}^{-1} \|W(t)\|_{L^2(\varOmega)}^2
\label{est3c}
\end{align}
for all $ \boldsymbol{\epsilon}>0.$
The $L^2(\varOmega)$--norm of $\p_1W_2$ can be controlled by virtue of \eqref{ELP3a} and \eqref{decom}. More precisely, from \eqref{ELP3a} and \eqref{decom}, we have
\begin{align} \label{iden5}
\begin{pmatrix}
\p_1 W_2\\\p_1 W_1\\0
\end{pmatrix}
=\bm{f}-{\bm{A}}_4 W-\sum_{i=0,2,3}{\bm{A}}_i\p_i W-{\bm{A}}_1^{(0)}\p_1W,
\end{align}
which implies the estimate
\begin{align}
\|\p_1 W_{\rm nc}(t)\|_{L^2(\varOmega)}^2
&\leq C(K) \sum_{\langle\beta\rangle\leq 1}\|\mathrm{D}_*^{\beta}W(t)\|_{L^2(\varOmega)}^2+C(K)\|\bm{f}(t)\|_{L^2(\varOmega)}^2
\label{est3d}
\end{align}
for the noncharacteristic variables $W_{\rm nc}:=(W_1,W_2)^{\mathsf{T}}$.
Here we recall that the operator $\mathrm{D}_*^{\beta}$ is defined by \eqref{D*}.
Substituting \eqref{est3d} into \eqref{est3c}, we deduce
\begin{align}
 \|W_2(t)\|_{L^2(\varSigma)}^2
 \leq C(K) \bigg( \boldsymbol{\epsilon} \sum_{\langle\beta\rangle\leq 1}\|\mathrm{D}_*^{\beta}W(t)\|_{L^2(\varOmega)}^2+C(\boldsymbol{\epsilon})\|(\bm{f},W)\|_{1,*,t}^2\bigg).
\label{est3e}
\end{align}
Then we combine \eqref{est3b} and \eqref{est3d}--\eqref{est3e} to obtain
\begin{align}
\nonumber
&\|\p_t W(t)\| _{L^2(\varOmega)}^2  +\,
 \boldsymbol{\epsilon}\|\p_1 W_{2}(t)\|_{L^2(\varOmega)}^2
+ \|(W_2, \mathrm{D}_{x'}\p_t\psi)(t)\|_{L^2(\varSigma)}^2\\[2mm]
\nonumber
 &\leq C(K) \bigg( C(\boldsymbol{\epsilon}) \|(\bm{f},W)\|_{1,*,t}^2
 +C(\boldsymbol{\epsilon})\|\p_1 W_2\|_{L^2(\varOmega_t)}^2
  +\boldsymbol{\epsilon} \sum_{\langle\beta\rangle\leq 1}\|\mathrm{D}_*^{\beta}W(t)\|_{L^2(\varOmega)}^2
 \\
&\qquad\qquad\ \
 + \sum_{|\alpha|\leq 2} \|(W_2,\mathrm{D}_{x'}^{\alpha}\psi, \p_t \psi,\mathrm{D}_{x'}\p_t\psi)\|_{L^2(\varSigma_t)}^2\bigg)
  \ \textrm{ for all }\boldsymbol{\epsilon}>0.
\label{est3f}
\end{align}

\subsubsection{$H_*^1$ Estimate of $W$}

In view of the boundary condition \eqref{ELP3b}, we infer
\begin{align}
\|\p_t\psi\|_{L^2(\varSigma_t)}^2
\leq C(K)\|(W_2,\psi,\mathrm{D}_{x'}\psi)\|_{L^2(\varSigma_t)}^2.
\label{est3g}
\end{align}
Apply operator $\sigma\p_1$ to \eqref{ELP3a} and take the resulting equation with $\sigma \p_1 W$ to get
\begin{align}
\|\sigma \p_1W(t)\|_{L^2(\varOmega)}^2 \leq C(K)\|(\bm{f},W)\|_{1,*,t}^2.
\nonumber
\end{align}
Combining the last estimate with \eqref{est1d}, \eqref{est2b},
and \eqref{est3f}--\eqref{est3g}, we choose $\boldsymbol{\epsilon}>0$ small enough to obtain
\begin{align}
\nonumber&
\sum_{\langle\beta\rangle\leq 1}\|\mathrm{D}_*^{\beta}W(t)\|_{L^2(\varOmega)}^2
+\|\p_1 W_{2}(t)\|_{L^2(\varOmega)}^2
+ \sum_{|\alpha|\leq 2}\|(W_2, \mathrm{D}_{x'}^{\alpha}\psi,  \mathrm{D}_{x'}\p_t\psi)(t)\|_{L^2(\varSigma)}^2\\
&  \ \ \leq  C(K) \bigg(\|(\bm{f},W)\|_{1,*,t}^2
+\|\p_1 W_2\|_{L^2(\varOmega_t)}^2
+\sum_{|\alpha|\leq 2}\|(W_2, \mathrm{D}_{x'}^{\alpha}\psi,   \mathrm{D}_{x'}\p_t\psi)\|_{L^2(\varSigma_t)}^2 \bigg).
\nonumber
\end{align}
By virtue of Gr\"{o}nwall's inequality, we infer
\begin{align}
&\sum_{\langle\beta\rangle\leq 1}\|\mathrm{D}_*^{\beta}W(t)\|_{L^2(\varOmega)}^2
+ \sum_{|\alpha|\leq 2}\|(W_2, \mathrm{D}_{x'}^{\alpha}\psi,  \mathrm{D}_{x'}\p_t\psi)(t)\|_{L^2(\varSigma)}^2
 \leq  C(K)  \|\bm{f} \|_{1,*,t}^2,
\label{est4a}
\end{align}
which together with \eqref{est3d}, \eqref{est3g}, and \eqref{ELP3c} yields
\begin{multline}
	\|W\|_{1,*,T}+ \|\partial_1W_{\rm nc}\|_{L^2(\varOmega_T)}+\|W_{\rm nc}\|_{L^2(\varSigma_T)} \\
	+ \|(\psi, \mathrm{D}_{x'}\psi)\|_{H^1(\varSigma_T)}
	\leq  C(K)  \|\bm{f}\|_{1,*,T}.
\label{est4}
\end{multline}
This is the desired $H_*^1$ {\it a priori} estimate for solutions $W$ of the linearized problem \eqref{ELP3},
which also contains the $L^2$ estimate for the traces of the noncharacteristic variables $W_{\rm nc}:=(W_1,W_2)^{\mathsf{T}}$ on the boundary $\{x_1=0\}$.

\subsection{Well-Posedness in $H_*^1$}\label{sec:linear3}

The main purpose of this subsection is to construct the unique solution of the linearized problem \eqref{ELP3}.
Since the $L^2$ {\it a priori} estimate \eqref{est1b} is not closed without assuming the generalized Rayleigh--Taylor sign condition, the duality argument cannot be applied directly for the solvability of the problem \eqref{ELP3}.
To overcome this difficulty, we design for the linearized problem \eqref{ELP3} some suitable $\varepsilon$--regularization, for which we can deduce an $L^2$ {\it a priori} estimate with a constant $C(\varepsilon )$ depending on the small parameter $\varepsilon\in(0,1)$.
It will turn out that an $L^2$ {\it a priori} estimate can be also achieved for the corresponding dual problem. Then we prove the existence and uniqueness of solutions in $L^2$ for any fixed and small parameter $\varepsilon\in(0,1)$ by the duality argument. However, our constant $C(\varepsilon)$ tends to infinity as $\varepsilon\to 0$, and hence we are not able to use the $L^2$ estimate obtained for the regularized problem to take the limit $\varepsilon\to 0$. To deal with this situation, we then derive a uniform-in-$\varepsilon$ estimate in $H_*^1$ for the regularized problem, which enables us to solve the linearized problem \eqref{ELP3} by passing to the limit $\varepsilon\to 0$.

More precisely, we define the following regularized problem:
\begin{subequations} \label{Reg}
\begin{alignat}{3}
	\label{Reg.a}
	&{{\bf L}}_{\varepsilon}W:=\sum_{i=0}^3{\bm{A}}_i\p_i W-\varepsilon\bm{J}\p_1 W +{\bm{A}}_4 W =\bm{f}
	&\quad &\textnormal{in }\varOmega_T,\\
	\label{Reg.b}
	&W_2=(\p_t+\mathring{v}_2\p_2+\mathring{v}_3\p_3)\psi-\p_1\mathring{v}\cdot\mathring{N} \psi
	+\varepsilon(\p_2^4+\p_3^4)\psi
	&\quad &\textnormal{on }\varSigma_T,\\[1mm]
	\label{Reg.c}
	&W_1=-\p_1\mathring{q} \psi +\mathfrak{s}\mathrm{D}_{x'}\cdot
	\bigg(\dfrac{\mathrm{D}_{x'}\psi}{|\mathring{N}|}-
	\dfrac{\mathrm{D}_{x'}\mathring{\varphi}\cdot\mathrm{D}_{x'}\psi}{|\mathring{N}|^3}\mathrm{D}_{x'}\mathring{\varphi}
	\bigg)
	&\quad &\textnormal{on }\varSigma_T,\\[1mm]
	\label{Reg.d}
	&(W,\psi)=0 &\quad &\textnormal{if }t<0,
	\end{alignat}
\end{subequations}
where $ \bm{J}:={\rm diag}\,(0,1,0,\ldots,0).$
The term $-\varepsilon\bm{J}\p_1 W $ containing in the interior equations \eqref{Reg.a} helps us to deduce the $L^2$ energy estimate for the problem \eqref{Reg},
while the term $\varepsilon(\p_2^4+\p_3^4)\psi$ added in the boundary condition \eqref{Reg.b} is particularly useful in the derivation of the $L^2$ energy estimate for the dual problem of \eqref{Reg}.

This subsection is divided into three parts as follows.

\subsubsection{$L^2$ Estimate for the Regularized Problem}
Let us first show the $L^2$ {\it a priori} estimate for the problem \eqref{Reg}.
Taking the scalar product of \eqref{Reg.a} with $W$ implies
\begin{align} \label{est:0a.R}
\int_{\varOmega} {\bm{A}}_0W\cdot W(t,x)\d x+\int_{\varSigma_t} (\varepsilon\bm{J}-{\bm{A}}_1)W\cdot W
\leq C(K)\|(\bm{f},W)\|_{L^2(\varOmega_t)}^2.
\end{align}
By virtue of \eqref{decom} and \eqref{Reg.b}, we have
\begin{align}
(\varepsilon\bm{J}-{\bm{A}}_1)W\cdot W
=\varepsilon W_2^2-2 W_1   {\rm B}\psi
-2\varepsilon \sum_{i=2,3}W_1 \p_i^4\psi
\quad \textrm{on }\varSigma_T,
\label{est:0b.R}
\end{align}
where the operator ${\rm B}$ is defined in \eqref{ELP3b}. For $i=2,3$, we get from \eqref{Reg.c} that
\begin{align}
-\int_{\varSigma_t} W_1 \p_i^4\psi
&=\mathfrak{s}\int_{\varSigma_t}\p_i^2
\bigg(\dfrac{\mathrm{D}_{x'}\psi}{|\mathring{N}|}-
\dfrac{\mathrm{D}_{x'}\mathring{\varphi}\cdot\mathrm{D}_{x'}\psi}{|\mathring{N}|^3}\mathrm{D}_{x'}\mathring{\varphi}
\bigg)
\cdot \p_i^2\mathrm{D}_{x'}\psi+
\int_{\varSigma_t} \p_i^2(\p_1\mathring{q} \psi )\p_i^2\psi
\nonumber
\\
 &  \geq
\mathfrak{s}\int_{\varSigma_t} \dfrac{|\p_i^2\mathrm{D}_{x'}\psi|^2}{|\mathring{N}|^3}
-\int_{\varSigma_t}  \left|[\p_i^2,\mathring{\rm c}_1]\mathrm{D}_{x'}\psi \cdot \p_i^2\mathrm{D}_{x'}\psi
+   \p_i^2(\p_1\mathring{q} \psi )\p_i^2\psi\right|
\nonumber \\
&  \geq
\frac{\mathfrak{s}}{2}\int_{\varSigma_t} \dfrac{|\p_i^2\mathrm{D}_{x'}\psi|^2}{|\mathring{N}|^3}
-C(K)\sum_{|\alpha|\leq 2}\|\mathrm{D}_{x'}^{\alpha}\psi\|_{L^2(\varSigma_t)}^2.
\label{est:0c.R}
\end{align}
Substituting \eqref{est:0b.R} into \eqref{est:0a.R}, we utilize \eqref{est:0c.R} and the last identity in \eqref{iden1} to infer
\begin{align}
&\|W(t)\|_{L^2(\varOmega)}^2+\|\mathrm{D}_{x'}\psi(t)\|_{L^2(\varSigma)}^2
+\varepsilon\|(W_2,\p_2^2\mathrm{D}_{x'}\psi,\p_3^2\mathrm{D}_{x'}\psi)\|_{L^2(\varSigma_t)}^2
\nonumber\\[1mm]
& \leq
C(K)\Big(
\|(\bm{f},W)\|_{L^2(\varOmega_t)}^2
+\sum_{|\alpha|\leq 2}\|(\psi, \mathrm{D}_{x'}\psi,\sqrt{\varepsilon}\mathrm{D}_{x'}^{\alpha} \psi)\|_{L^2(\varSigma_t)}^2
+\|\psi(t)\|_{L^2(\varSigma)}^2
\Big).
\label{est:0d.R}
\end{align}

To estimate the last term in \eqref{est:0d.R}, we multiply \eqref{Reg.b} with $\psi$ and obtain
\begin{align}
\nonumber &\|\psi(t)\|_{L^2(\varSigma)}^2
+2\varepsilon \|(\p_2^2\psi,\p_3^2\psi)\|_{L^2(\varSigma_t)}^2
 \\
&\qquad  \leq C(K)\|\psi\|_{L^2(\varSigma_t)}^2
+2\int_{\varSigma_t}|\psi W_2|
\nonumber \\
 & \qquad \leq \boldsymbol{\epsilon}\varepsilon \|W_2\|_{L^2(\varSigma_t)}^2
+\left(C(K)+\boldsymbol{\epsilon}^{-1}\varepsilon^{-1}\right)\|\psi\|_{L^2(\varSigma_t)}^2
\quad \textrm{for all }\boldsymbol{\epsilon}>0.
\label{est:0e.R}
\end{align}
Since
\begin{align}
\|\p_2\p_3\mathrm{D}_{x'}^{\alpha}\psi\|_{L^2(\varSigma)}^2
=\int_{\varSigma}\p_2^2\mathrm{D}_{x'}^{\alpha}\psi \p_3^2\mathrm{D}_{x'}^{\alpha}\psi
\leq  \|(\p_2^2\mathrm{D}_{x'}^{\alpha}\psi,\p_3^2\mathrm{D}_{x'}^{\alpha}\psi)\|_{L^2(\varSigma)}^2
\label{est:0f.R}
\end{align}
for any $\alpha\in\mathbb{N}^2$, it follows from \eqref{est:0e.R} that
\begin{align}
& \|\psi(t)\|_{L^2(\varSigma)}^2
+ \varepsilon\|\mathrm{D}_{x'}^{2} \psi \|_{L^2(\varSigma_t)}^2
   \leq \boldsymbol{\epsilon}\varepsilon \|W_2\|_{L^2(\varSigma_t)}^2
+C(K)C(\boldsymbol{\epsilon}\varepsilon)\|\psi\|_{L^2(\varSigma_t)}^2
\label{est:0e.R1}
\end{align}
for all $\boldsymbol{\epsilon}>0$.
Here we have formally that $C(\boldsymbol{\epsilon}\varepsilon)\to +\infty$ as $\varepsilon\to 0$. The same is true for all constants depending on $\varepsilon$ which appear below. This is however unimportant because the parameter $\varepsilon$ is now fixed.
Plug \eqref{est:0e.R1} into \eqref{est:0d.R}, take $\boldsymbol{\epsilon}>0$ sufficiently small,
and use \eqref{est:0f.R} with $|\alpha|=1$ to deduce
\begin{multline}
\|W(t)\|_{L^2(\varOmega)}^2+\|(\psi,\mathrm{D}_{x'}\psi)(t)\|_{L^2(\varSigma)}^2
+\varepsilon \|(W_2,\mathrm{D}_{x'}^{2} \psi,\mathrm{D}_{x'}^{3} \psi)\|_{L^2(\varSigma_t)}^2
\nonumber\\
 \leq
C(K)\left(
\|(\bm{f},W)\|_{L^2(\varOmega_t)}^2
+C(\varepsilon)\|(\psi, \mathrm{D}_{x'}\psi)\|_{L^2(\varSigma_t)}^2
\right).
\end{multline}
Then we apply Gr\"{o}nwall's inequality and utilize \eqref{Reg.c} to get
\begin{multline}
\|W(t)\|_{L^2(\varOmega)}^2
+\|(\psi,\mathrm{D}_{x'}\psi)(t)\|_{L^2(\varSigma)}^2
\\
+ \|(W_{\rm nc},\mathrm{D}_{x'}^{2} \psi,\mathrm{D}_{x'}^{3} \psi)\|_{L^2(\varSigma_t)}^2
 \leq
C(K,\varepsilon)   \|\bm{f}\|_{L^2(\varOmega_t)}^2.
\label{est:0h.R}
\end{multline}

Moreover, we differentiate \eqref{Reg.b} two times with respect to $x_i$ and multiply the resulting identity with $\p_i^2\psi$, for $i=2,3$, to deduce
\begin{align}
&\sum_{i=2,3}\|\p_i^2\psi(t)\|_{L^2(\varSigma)}^2+2\varepsilon\sum_{i=2,3} \|(\p_2^2\p_i^2\psi,\p_3^2\p_i^2\psi)\|_{L^2(\varSigma_t)}^2
\nonumber\\
&\quad \ \leq C(K)\sum_{|\alpha|\leq 2}\|\mathrm{D}_{x'}^{\alpha}\psi\|_{L^2(\varSigma_t)}^2
+2\sum_{i=2,3}\int_{\varSigma_t}\left|  \p_i^4\psi W_2\right|
\nonumber \\
& \quad\   \leq \boldsymbol{\epsilon}\varepsilon \sum_{i=2,3}\|\p_i^4\psi\|_{L^2(\varSigma_t)}^2
+ \left(C(K)+\frac{1}{\boldsymbol{\epsilon}\varepsilon}\right) \sum_{|\alpha|\leq 2}\|(W_2,\mathrm{D}_{x'}^{\alpha}\psi)\|_{L^2(\varSigma_t)}^2
\nonumber
\end{align}
for all $\boldsymbol{\epsilon}>0$. Taking $\boldsymbol{\epsilon}>0$ sufficiently small in the last estimate, we use \eqref{est:0f.R} and \eqref{est:0h.R} to infer
\begin{multline}
\|W(t)\|_{L^2(\varOmega)}^2+\sum_{ |\alpha|\leq 2}\|\mathrm{D}_{x'}^{\alpha}\psi(t)\|_{L^2(\varSigma)}^2
 \\
 + \|(W_{\rm nc},\mathrm{D}_{x'}^{2} \psi,\mathrm{D}_{x'}^{3} \psi,\mathrm{D}_{x'}^{4} \psi)\|_{L^2(\varSigma_t)}^2
 \leq
C(K,\varepsilon)   \|\bm{f}\|_{L^2(\varOmega_t)}^2,
\nonumber 
\end{multline}
which combined with \eqref{Reg.b} implies
\begin{align}
\|W\|_{L^2(\varOmega_{T})}+\sum_{ |\alpha|\leq 4}\|(W_{\rm nc},\mathrm{D}_{x'}^{\alpha} \psi,\p_t\psi)\|_{L^2(\varSigma_{T})}
 \leq
C(K,\varepsilon)   \|\bm{f}\|_{L^2(\varOmega_{T})}.
\label{est:0j.R}
\end{align}

The {\it a priori} $L^2$ estimate \eqref{est:0j.R} (together with  the $L^2$ estimate \eqref{dual.es0h} below for the dual problem) is suitable to prove the existence of solutions of the regularized problem \eqref{Reg} for any {\it fixed} and small $\varepsilon>0$. However, since $C(K,\varepsilon)\to +\infty$ as $\varepsilon\to 0$, for passing to the limit $\varepsilon\to 0$ we will have to deduce an additional uniform-in-$\varepsilon$ estimate.

\subsubsection{Existence of Solutions for the Regularized Problem}
We solve the regularized problem \eqref{Reg} by means of the duality argument.
To this end,
it suffices to derive an $L^2$ {\it a priori} estimate without loss of derivatives for the
following dual problem of \eqref{Reg}:
\begin{subequations} \label{dual}
	\begin{alignat}{3}
	\label{dual.a}
	&{{\bf L}}_{\varepsilon}^*W^*=\bm{f}^*
	&\quad &\textnormal{if }x_1>0,\\
	&\p_tw^*+\p_2(\mathring{v}_2w^*)+\p_3(\mathring{v}_3w^*)-\varepsilon(\p_2^4+\p_3^4)w^*
	+\p_1\mathring{v}\cdot \mathring{N}w^*&\quad &
	\nonumber\\
	&\quad +\p_1\mathring{q} W_2^*-\mathfrak{s}\mathrm{D}_{x'}\cdot
	\bigg(\dfrac{\mathrm{D}_{x'}W_2^*}{|\mathring{N}|}-
	\dfrac{\mathrm{D}_{x'}\mathring{\varphi}\cdot\mathrm{D}_{x'}W_2^*}{|\mathring{N}|^3}\mathrm{D}_{x'}\mathring{\varphi}
	\bigg)=0
	&\quad &\textnormal{if }x_1=0,	\label{dual.b}\\
	\label{dual.c}
	&W^*|_{{t>T} }=0, &\quad &
	\end{alignat}
\end{subequations}
where $w^*:=W_1^*-\varepsilon W_2^*$, and ${{\bf L}}_{\varepsilon}^*$ is the formal adjoint of ${{\bf L}}_{\varepsilon}$ ({\it cf.}~\eqref{Reg.a}):
\begin{align}
{{\bf L}}_{\varepsilon}^*:=-\sum_{i=0}^{3}\bm{A}_i\p_i+\varepsilon\bm{J}\p_1+\bm{A}_4^{\mathsf{T}}-\sum_{i=0}^{3}\p_i\bm{A}_i.
\nonumber
\end{align}
The boundary condition \eqref{dual.b} is imposed to guarantee that
\begin{align*}
 &\int_{\varOmega_{T}}\left({{\bf L}}_{\varepsilon}W\cdot W^* -W\cdot {{\bf L}}_{\varepsilon}^* W^*\right)
\\
&\quad =\int_{\varSigma_{T}}\left(\varepsilon\bm{J}-\bm{A}_1 \right)W\cdot W^*
=-\int_{\varSigma_{T}}\left( W_2w^*+W_1W_2^*\right) =0,
\end{align*}
thanks to \eqref{Reg.b}--\eqref{Reg.d} and \eqref{dual.c}.
To derive the energy estimate for the dual problem \eqref{dual}, we let $\tilde{t}:={T}-t$ and define
\[\widetilde{W}^*(\tilde{t},x):={W}^*(t,x), \quad
\widetilde{\bm{f}}^*(\tilde{t},x):= {\bm{f}}^*(t,x).
\]
Then
\begin{subequations} \label{dual2}
	\begin{alignat}{3}
	\label{dual2.a}
	&\bigg(\bm{A}_0\p_{\tilde{t}}-\sum_{i=1}^{3}\bm{A}_i\p_i+\varepsilon\bm{J}\p_1+\bm{A}_4^{\mathsf{T}}-\sum_{i=0}^{3}\p_i\bm{A}_i\bigg)\widetilde{W}^*=\widetilde{\bm{f}}^*
	&\quad &\textnormal{if }x_1>0,\\
	&\p_{\tilde{t}}\widetilde{w}^*-\p_2(\mathring{v}_2\widetilde{w}^*)-\p_3(\mathring{v}_3\widetilde{w}^*)+\varepsilon(\p_2^4+\p_3^4)\widetilde{w}^*
	-\p_1\mathring{v}\cdot \mathring{N}\widetilde{w}^*&\quad &
	\nonumber\\
	&\quad -\p_1\mathring{q} \widetilde{W}_2^*+\mathfrak{s}\mathrm{D}_{x'}\cdot
	\bigg(\dfrac{\mathrm{D}_{x'}\widetilde{W}_2^*}{|\mathring{N}|}-
	\dfrac{\mathrm{D}_{x'}\mathring{\varphi}\cdot\mathrm{D}_{x'}\widetilde{W}_2^*}{|\mathring{N}|^3}\mathrm{D}_{x'}\mathring{\varphi}
	\bigg)=0
	&\quad &\textnormal{if }x_1=0,	\label{dual2.b}\\
	\label{dual2.c}
	&\widetilde{W}^*|_{{\tilde{t}<0} }=0, &\quad &
	\end{alignat}
\end{subequations}
where $\widetilde{w}^*:=\widetilde{W}_1^*-\varepsilon \widetilde{W}_2^*$.
Taking the scalar product of \eqref{dual2.a} with $\widetilde{W}^*$ yields
\begin{align} \nonumber 
\int_{\varOmega} {\bm{A}}_0\widetilde{W}^*\cdot \widetilde{W}^*(\tilde{t},x) \d x
+\int_{\varSigma_{\tilde{t}}} ({\bm{A}}_1-\varepsilon\bm{J})\widetilde{W}^*\cdot \widetilde{W}^*
\leq C(K)\|(\widetilde{\bm{f}}^*,\widetilde{W}^*)\|_{L^2(\varOmega_{\tilde{t}})}^2.
\end{align}
Since
\begin{align*}
({\bm{A}}_1-\varepsilon\bm{J})\widetilde{W}^*\cdot \widetilde{W}^*
=2\widetilde{W}^*_1\widetilde{W}^*_2-\varepsilon (\widetilde{W}^*_2)^2
=2\widetilde{w}^*\widetilde{W}^*_2+\varepsilon (\widetilde{W}^*_2)^2
\ \ \textrm{if }x_1=0,
\end{align*}
we use Cauchy's inequality to infer
\begin{align}
&\|\widetilde{W}^*(\tilde{t})\|_{L^2(\varOmega)}^2
+\varepsilon\|\widetilde{W}^*_2\|_{L^2(\varSigma_{\tilde{t}})}^2
\nonumber\\
&\qquad \leq
C(K)\left(
\|(\widetilde{\bm{f}}^*,\widetilde{W}^*)\|_{L^2(\varOmega_{\tilde{t}})}^2
+\varepsilon^{-1}\| \widetilde{w}^*\|_{L^2(\varSigma_{\tilde{t}})}^2
\right).
\label{dual.es0b}
\end{align}
Multiply the boundary condition \eqref{dual2.b} by $\widetilde{w}^*$ to deduce
\begin{align}
&\|\widetilde{w}^*(\tilde{t})\|_{L^2(\varSigma)}^2+2\varepsilon\|(\p_2^2\widetilde{w}^*,\p_3^2\widetilde{w}^*)\|_{L^2(\varSigma_{\tilde{t}})}^2
\nonumber \\[1mm]
&
\leq C(K) \|(\widetilde{w}^*,\widetilde{W}^*_2)\|_{L^2(\varSigma_{\tilde{t}})}^2
+2\mathfrak{s}\left| \int_{\varSigma_{\tilde{t}}} \widetilde{W}^*_2 \mathrm{D}_{x'}\cdot
\bigg(\dfrac{\mathrm{D}_{x'}\widetilde{w}^*}{|\mathring{N}|}-
\dfrac{\mathrm{D}_{x'}\mathring{\varphi}\cdot\mathrm{D}_{x'}\widetilde{w}^*}{|\mathring{N}|^3}\mathrm{D}_{x'}\mathring{\varphi}
\bigg) \right|
\nonumber\\[1mm]
&
\leq \boldsymbol{\epsilon}\varepsilon\sum_{ |\alpha|\leq 2}\|\mathrm{D}_{x'}^{\alpha}\widetilde{w}^*\|_{L^2(\varSigma_{\tilde{t}})}^2
+C(K,\boldsymbol{\epsilon}\varepsilon) \|(\widetilde{w}^*,\widetilde{W}^*_2)\|_{L^2(\varSigma_{\tilde{t}})}^2
\ \ \textrm{for all }\boldsymbol{\epsilon}>0.
\label{dual.es0c}
\end{align}
It follows from integration by parts that
\begin{align}
&\|\p_2\p_3\widetilde{w}^*\|_{L^2(\varSigma_{\tilde{t}})}^2
=\int_{\varSigma_{\tilde{t}}}\p_2^2\widetilde{w}^* \p_3^2\widetilde{w}^*
\leq  \|(\p_2^2\widetilde{w}^*,\p_3^2\widetilde{w}^*)\|_{L^2(\varSigma_{\tilde{t}})}^2,
\nonumber \\
&\|\mathrm{D}_{x'}\widetilde{w}^*\|_{L^2(\varSigma_{\tilde{t}})}^2
=-\int_{\varSigma_{\tilde{t}}}\widetilde{w}^*\mathrm{D}_{x'}\cdot \mathrm{D}_{x'} \widetilde{w}^*
\leq
\|(\widetilde{w}^*,\mathrm{D}_{x'}^{2}\widetilde{w}^*)\|_{L^2(\varSigma_{\tilde{t}})}^2.
\nonumber
\end{align}
Plugging the above estimates into \eqref{dual.es0c} and taking $\boldsymbol{\epsilon}>0$ suitably small, we get
\begin{align}
\|\widetilde{w}^*(\tilde{t})\|_{L^2(\varSigma)}^2+\varepsilon\sum_{   |\alpha|\leq 2}\|\mathrm{D}_{x'}^{\alpha}\widetilde{w}^*\|_{L^2(\varSigma_{\tilde{t}})}^2
\leq  C(K,\varepsilon) \|(\widetilde{w}^*,\widetilde{W}^*_2)\|_{L^2(\varSigma_{\tilde{t}})}^2.
\label{dual.es0d}
\end{align}
Combine \eqref{dual.es0b} and \eqref{dual.es0d} to derive
\begin{multline}
\|\widetilde{W}^*(\tilde{t})\|_{L^2(\varOmega)}^2
+\|\widetilde{w}^*(\tilde{t})\|_{L^2(\varSigma)}^2+\sum_{  |\alpha|\leq 2}\|(\widetilde{W}^*_2,\mathrm{D}_{x'}^{\alpha}\widetilde{w}^*)\|_{L^2(\varSigma_{\tilde{t}})}^2
\nonumber\\
 \leq
C(K,\varepsilon)\left(
\|(\widetilde{\bm{f}}^*,\widetilde{W}^*)\|_{L^2(\varOmega_{\tilde{t}})}^2
+\| \widetilde{w}^*\|_{L^2(\varSigma_{\tilde{t}})}^2
\right).
\end{multline}
We apply Gr\"{o}nwall's inequality to the last estimate and obtain
\begin{multline}
\|\widetilde{W}^*(\tilde{t})\|_{L^2(\varOmega)}^2
+\|\widetilde{w}^*(\tilde{t})\|_{L^2(\varSigma)}^2+\sum_{  |\alpha|\leq 2}\|(\widetilde{W}^*_2,\mathrm{D}_{x'}^{\alpha}\widetilde{w}^*)\|_{L^2(\varSigma_{\tilde{t}})}^2
 \nonumber
 \leq
C(K,\varepsilon)
\|\widetilde{\bm{f}}^*\|_{L^2(\varOmega_{\tilde{t}})}^2.
\end{multline}
As a result, for the dual problem \eqref{dual} we obtain the following estimate:
\begin{align}
\|{W}^*\|_{L^2(\varOmega_{{T}})}
+\sum_{  |\alpha|\leq 2}\|({W}^*_2,\mathrm{D}_{x'}^{\alpha}{w}^*)\|_{L^2(\varSigma_{{T}})}
\leq
C(K,\varepsilon)
\|{\bm{f}}^*\|_{L^2(\varOmega_{{T}})}.
\label{dual.es0h}
\end{align}
Here, as in \eqref{est:0h.R}, $C(K,\varepsilon)\to +\infty$ as $\varepsilon\to 0$, but this is not important if we consider a fixed parameter $\varepsilon>0$.

With the $L^2$ estimates \eqref{est:0j.R} and \eqref{dual.es0h} in hand, we can prove the existence and uniqueness of a weak solution $W\in L^2(\varOmega_T)$ of the regularized problem \eqref{Reg} with the traces $W_{\rm nc}|_{x_1=0}$ belonging to $ L^2(\varSigma_T)$ for any fixed and small parameter $\varepsilon\in(0,1)$ by the classical duality argument in \cite{LP60MR0118949}.

We then consider \eqref{Reg.b} as a fourth-order parabolic equation for $\psi$ with the given source term $W_2|_{x_1=0}\in L^2(\varSigma_T)$ and zero initial data $\psi |_{t=0}=0$ ({\it cf.}~\eqref{Reg.d}).
Referring to \cite[Theorem 5.2]{CP82MR0678605},  we conclude that the Cauchy problem for this parabolic equation has a unique solution $\psi\in C ([0,T],H^4(\mathbb{R}^2))\bigcap C^1 ([0,T],L^2(\mathbb{R}^2))$ (implying that $\psi \in L^2((-\infty ,T],H^4(\mathbb{R}^2))$ and $\partial_t\psi\in L^2(\varSigma_T)$).
In fact, we have already obtained the {\it a priori} estimate for solutions $\psi$ of this Cauchy problem in \eqref{est:0j.R}.

Therefore, we have proved the existence of a unique solution $(W,\psi )\in L^2(\varOmega_T)\times L^2((-\infty ,T],H^4(\mathbb{R}^2))$ of the regularized problem \eqref{Reg} for any fixed and small parameter $\varepsilon>0$ with $W_{\rm nc}|_{x_1=0}\in L^2(\varSigma_T)$ and $\partial_t\psi\in L^2(\varSigma_T)$.
Then, tangential differentiation gives the existence of  a unique solution  $(W,\psi)\in H^1_*(\varOmega_T)\times H^1((-\infty ,T],H^4(\mathbb{R}^2))$, again for any fixed and small parameter $\varepsilon>0$.
Moreover, in next subsection we will prove a uniform-in-$\varepsilon$ {\it a priori} estimate for this solution.

\subsubsection{Uniform Estimate and Passing to the Limit}
We are going to show the {\it uniform-in-$\varepsilon$} energy estimate for the regularized problem \eqref{Reg} in $H_*^1$.
\vspace*{4mm}

\noindent{\bf $\bullet$ $L^2$ estimate of $W$.}\quad
Substitute \eqref{es1} into \eqref{est:0d.R} and use \eqref{est:0f.R} to get
\begin{multline}
\|W(t)\|_{L^2(\varOmega)}^2+\|(\psi,\mathrm{D}_{x'}\psi)(t)\|_{L^2(\varSigma)}^2
+\varepsilon \|(W_2,  \mathrm{D}_{x'}^{3}\psi)\|_{L^2(\varSigma_t)}^2
\\
 \leq
C(K)\Big(
\|(\bm{f},W)\|_{L^2(\varOmega_t)}^2
+ \sum_{  |\alpha|\leq 2}\|(\psi, \mathrm{D}_{x'}\psi,\p_t\psi,\sqrt{\varepsilon}\mathrm{D}_{x'}^{\alpha}\psi)\|_{L^2(\varSigma_t)}^2
\Big).
\label{uni1}
\end{multline}
\vspace*{2mm}

\noindent{\bf $\bullet$ $L^2$ estimate of $\mathrm{D}_{x'}W$.}\quad
Let  $k\in\{0,2,3\}$. Apply the operator $\p_k$ to \eqref{Reg.a}, take the scalar product of the resulting equation with $\p_k W$, and use \eqref{es2} to discover
\begin{align}
\int_{\varOmega} {\bm{A}}_0\p_kW\cdot\p_k W \d x
+\int_{\varSigma_t} (\varepsilon\bm{J}-\bm{A}_1)\p_kW\cdot \p_kW
\leq C(K)\|(\bm{f},W)\|_{1,*,t}^2.
\label{uni2}
\end{align}
It follows from \eqref{decom} that
\begin{align}
&(\varepsilon\bm{J}-\bm{A}_1)\p_kW\cdot \p_kW
=\varepsilon (\p_k W_2)^2-2\p_kW_1\p_kW_2
\nonumber\\
&\quad =\varepsilon (\p_k W_2)^2
-2\p_kW_1\p_k{\rm B}\psi
-2\varepsilon\sum_{i=2,3}\p_kW_1\p_k\p_i^4\psi
\quad \textrm{on } \varSigma_T,
\label{uni2a}
\end{align}
with ${\rm B}$ being defined in \eqref{ELP3b}.
In view of \eqref{Reg.c}, we have
\begin{align}
-2\varepsilon\sum_{i=2,3}\int_{\varSigma_t}  \p_kW_1\p_k\p_i^4\psi
=\underbrace{-2\varepsilon\sum_{i=2,3}\int_{\varSigma_t}  \p_i\p_k(\p_1\mathring{q}\psi)\p_k\p_i^3\psi}_{\mathcal{J}_3^{(k)}}
+\mathcal{J}_4^{(k)},
\label{uni2b}
\end{align}
where
\begin{align}
\mathcal{J}_4^{(k)}:=2\varepsilon \mathfrak{s} \sum_{i=2,3}\int_{\varSigma_t}
\p_k\p_i^2
\left(\dfrac{\mathrm{D}_{x'}\psi}{|\mathring{N}|}-
\dfrac{\mathrm{D}_{x'}\mathring{\varphi}\cdot\mathrm{D}_{x'}\psi}{|\mathring{N}|^3}\mathrm{D}_{x'}\mathring{\varphi}
\right)\cdot\p_k\p_i^2\mathrm{D}_{x'}\psi.
\label{J5.cal}
\end{align}
Applying Cauchy's inequality yields
\begin{align}
\mathcal{J}_4^{(k)}
=\;& 2\varepsilon \mathfrak{s}\sum_{i=2,3}\int_{\varSigma_t}
\bigg(\dfrac{|\p_k\p_i^2\mathrm{D}_{x'}\psi|^2}{|\mathring{N}|}-
\dfrac{|\mathrm{D}_{x'}\mathring{\varphi}\cdot\p_k\p_i^2\mathrm{D}_{x'}\psi|^2}{|\mathring{N}|^3}
 \bigg)
\nonumber\\
&+2\varepsilon \mathfrak{s} \sum_{i=2,3}\int_{\varSigma_t}
[\p_k\p_i^2,\, h(\mathrm{D}_{x'}\mathring{\varphi})]  \mathrm{D}_{x'}\psi\cdot \p_k\p_i^2\mathrm{D}_{x'}\psi
\nonumber\\
\geq \;&\varepsilon \mathfrak{s} \sum_{i=2,3}\int_{\varSigma_t}\dfrac{|\p_k\p_i^2\mathrm{D}_{x'}\psi|^2}{|\mathring{N}|^3}
-\varepsilon C(K) \sum_{|\alpha|\leq 2}
\|(\mathrm{D}_{x'}^{\alpha}\p_k\psi,
\mathrm{D}_{x'}^{\alpha}\mathrm{D}_{x'}\psi)\|_{L^2(\varSigma_t)}^2,
\label{uni2c}\\
 \mathcal{J}_3^{(k)} \geq
\;& -\frac{\varepsilon \mathfrak{s}}{2} \sum_{i=2,3}\int_{\varSigma_t}\dfrac{|\p_k\p_i^2\mathrm{D}_{x'}\psi|^2}{|\mathring{N}|^3}
-\varepsilon C(K)\sum_{|\alpha|\leq 1}\|(  \mathrm{D}_{x'}^{\alpha}\psi,\mathrm{D}_{x'}^{\alpha} \p_k\psi)\|_{L^2(\varSigma_t)}^2,
\label{uni2d}
\end{align}
where $h(\mathrm{D}_{x'}\mathring{\varphi})$ is some smooth matrix-valued function of $\mathrm{D}_{x'}\mathring{\varphi}$.
Plugging \eqref{uni2a} into \eqref{uni2}, we use \eqref{iden3}--\eqref{est:bas1},  \eqref{uni2c}--\eqref{uni2d}, and \eqref{est:0f.R} to infer
\begin{align}
\|\mathrm{D}_{x'}W(t)\|_{L^2(\varOmega)}^2
&+ \|\mathrm{D}_{x'}^{2}\psi(t)\|_{L^2(\varSigma)}^2
+\varepsilon \|(\mathrm{D}_{x'}W_2,\mathrm{D}_{x'}^{4}\psi)\|_{L^2(\varSigma_t)}^2
\nonumber\\[1.5mm]
 \leq C(K)\bigg(&\|(\bm{f},W)\|_{1,*,t}^2
 + \varepsilon\sum_{|\alpha|\leq 2}  \|\mathrm{D}_{x'}^{\alpha}\mathrm{D}_{x'}\psi\|_{L^2(\varSigma_t)}^2
 +   \boldsymbol{\epsilon} \| \p_t \psi \|_{L^2(\varSigma_t)}^2
\nonumber\\
&
+C(\boldsymbol{\epsilon})\sum_{|\alpha|\leq 2}
\|(\mathrm{D}_{x'}^{\alpha}\psi,  \mathrm{D}_{x'}\p_t\psi)\|_{L^2(\varSigma_t)}^2
 \bigg)
 \quad \textrm{for all }\boldsymbol{\epsilon}>0.
\label{uni3}
\end{align}
In view of \eqref{Reg.b}, we obtain
\begin{align}
\| \p_t \psi \|_{L^2(\varSigma_t)}^2
\leq C(K)\|(W_2,\psi,\mathrm{D}_{x'} \psi,\varepsilon \p_2^4 \psi, \varepsilon \p_3^4 \psi)\|_{L^2(\varSigma_t)}^2.
\nonumber
\end{align}
We plug the last inequality into \eqref{uni3} and take $\boldsymbol{\epsilon}>0$ sufficiently small to get
\begin{align}
&\|\mathrm{D}_{x'}W(t)\|_{L^2(\varOmega)}^2
+ \|\mathrm{D}_{x'}^{2}\psi(t)\|_{L^2(\varSigma)}^2
+ \|(\p_t \psi,\sqrt{\varepsilon}\mathrm{D}_{x'}W_2,\sqrt{\varepsilon}\mathrm{D}_{x'}^{4}\psi)\|_{L^2(\varSigma_t)}^2
\nonumber\\[1mm]
&  \leq C(K)\bigg(\|(\bm{f},W)\|_{1,*,t}^2
+  \sum_{|\alpha|\leq 2}
\|(W_2,\mathrm{D}_{x'}^{\alpha}\psi,  \mathrm{D}_{x'}\p_t\psi,\sqrt{\varepsilon}\mathrm{D}_{x'}^{\alpha}\mathrm{D}_{x'}\psi)\|_{L^2(\varSigma_t)}^2\bigg).
\label{uni3b}
\end{align}
\vspace*{2mm}

\noindent{\bf $\bullet$ $L^2$ estimate of $\p_tW$.}\quad
For $k=0$, in view of \eqref{decom} and \eqref{Reg.c}, we have
\begin{align*}
(\varepsilon\bm{J}-\bm{A}_1)\p_tW\cdot \p_tW
=\;&\varepsilon   (\p_t W_2)^2
+2 \p_1\mathring{q} \p_t\psi\p_tW_2
+2 \p_t\p_1\mathring{q} \psi\p_tW_2 \\
&-2\mathfrak{s}
\p_t\mathrm{D}_{x'}\cdot
\bigg(\dfrac{\mathrm{D}_{x'}\psi}{|\mathring{N}|}-
\dfrac{\mathrm{D}_{x'}\mathring{\varphi}\cdot\mathrm{D}_{x'}\psi}{|\mathring{N}|^3}\mathrm{D}_{x'}\mathring{\varphi}
\bigg) \p_t W_2
\ \ \textrm{on } \varSigma_T.
\end{align*}
Then it follows from \eqref{Reg.b} that
\begin{align}
\int_{\varSigma_t} (\varepsilon\bm{J}-\bm{A}_1)\p_tW\cdot \p_tW
=\varepsilon \int_{\varSigma_t} (\p_t W_2)^2
+{\mathcal{J}_1}+{\mathcal{J}_2}
+ \int_{\varSigma_t} \mathcal{T}_0+ \mathcal{J}_4^{(0)},
\label{uni4}
\end{align}
where the terms $\mathcal{J}_1$, $\mathcal{J}_2$, $\mathcal{T}_0$, and $\mathcal{J}_4^{(0)}$ are defined in \eqref{est3a}, \eqref{iden3}, and \eqref{J5.cal}.
Using the boundary condition \eqref{Reg.b} again, we get
\begin{align}
\mathcal{J}_1=\mathcal{J}_{1a}+\mathcal{J}_{1b}
\underbrace{-2\varepsilon\sum_{i=2,3} \int_{\varSigma_t} \p_1\mathring{q}\p_tW_2\p_i^4\psi}_{\mathcal{J}_{1c}},
\label{uni4a}
\end{align}
where the terms $\mathcal{J}_{1a}$ and $\mathcal{J}_{1b}$ are defined in \eqref{J1.est1}.
Clearly, we have
\begin{align}
|\mathcal{J}_{1c}|\leq \frac{\varepsilon}{2}\int_{\varSigma_t} (\p_t W_2)^2
+\varepsilon C(K) \|(\p_2^4\psi,\p_3^4\psi)\|_{L^2(\varSigma_t)}^2.
\label{uni4b}
\end{align}
Pugging \eqref{uni4}--\eqref{uni4a} into \eqref{uni2} with $k=0$, and utilizing
\eqref{J1a.est1}--\eqref{J1b.est1}, \eqref{uni4b}, \eqref{iden4}--\eqref{est:bas1}, \eqref{uni2c},  \eqref{est3c}, and \eqref{est:0f.R} imply
\begin{align}
&\|\p_tW(t)\|_{L^2(\varOmega)}^2 +\|(W_2,\p_t\mathrm{D}_{x'}\psi)(t)\|_{L^2(\varSigma)}^2
+\varepsilon \|(\p_tW_2,\mathrm{D}_{x'}^{3}\p_t\psi)\|_{L^2(\varSigma_t)}^2
\nonumber\\[1mm]
& \leq
C(K) \bigg(\boldsymbol{\epsilon} \|\p_1W_2(t)\|_{L^2(\varOmega)}^2
+C(\boldsymbol{\epsilon})\|\p_1W_2\|_{L^2(\varOmega_t)}^2
\nonumber\\[1mm]
&\qquad\qquad\ \
+C(\boldsymbol{\epsilon})\|(\bm{f},W)\|_{1,*,t}^2
+\sum_{|\alpha|\leq 2}
\|(W_2,\mathrm{D}_{x'}^{\alpha}\psi, \mathrm{D}_{x'}\p_t\psi )\|_{L^2(\varSigma_t)}^2
\nonumber\\
&\qquad\qquad\ \
+\|\p_t \psi\|_{L^2(\varSigma_t)}^2
+ {\varepsilon}\sum_{|\alpha|\leq 2} \|(\mathrm{D}_{x'}^{\alpha}\mathrm{D}_{x'}\psi,
 \mathrm{D}_{x'}^{\alpha}\p_t\psi,\p_2^4\psi,\p_3^4\psi)\|_{L^2(\varSigma_t)}^2
\bigg).
\label{uni5}
\end{align}
\vspace*{2mm}

\noindent{\bf $\bullet$ $H_*^1$ estimate of $W$.}\quad
To estimate the first term on the right-hand side, we use \eqref{Reg.a} and \eqref{decom} to get
\begin{align}
\nonumber 
\begin{pmatrix}
\p_1 W_2\\\p_1 W_1-\varepsilon \p_1W_2 \\0
\end{pmatrix}
=\bm{f}-{\bm{A}}_4 W-\sum_{i=0,2,3}{\bm{A}}_i\p_i W-{\bm{A}}_1^{(0)}\p_1W,
\end{align}
from which we obtain
\begin{align}
\|\p_1 W_{\rm nc}(t)\|_{L^2(\varOmega)}^2
&\leq C(K) \sum_{\langle\beta\rangle\leq 1}\|\mathrm{D}_*^{\beta}W(t)\|_{L^2(\varOmega)}^2+C(K)\|\bm{f}\|_{1,*,t}^2  .
\label{uni5a}
\end{align}
Applying the operator $\sigma\p_1$ to \eqref{Reg.a} implies
\begin{align}
\|\sigma \p_1W(t)\|_{L^2(\varOmega)}^2 \leq C(K)\|(\bm{f},W)\|_{1,*,t}^2.
\nonumber
\end{align}
Combining the last estimate with \eqref{uni1}, \eqref{uni3b}, \eqref{uni5}--\eqref{uni5a},
and
\begin{align*}
 \| \p_k\mathrm{D}_{x'}^{2}\psi\|_{L^2(\varSigma_t)}^2
\leq \boldsymbol{\epsilon}  \|  \p_k\mathrm{D}_{x'}^{3}\psi\|_{L^2(\varSigma_t)}^2
+C(\boldsymbol{\epsilon} )\|  \p_k\mathrm{D}_{x'}\psi\|_{L^2(\varSigma_t)}^2
\end{align*}
for $k=0,2,3$,
we take $\boldsymbol{\epsilon}>0$ sufficiently small to obtain
\begin{align}
\nonumber&
\sum_{\langle\beta\rangle\leq 1}\|\mathrm{D}_*^{\beta}W(t)\|_{L^2(\varOmega)}^2
+\|\p_1 W_{2}(t)\|_{L^2(\varOmega)}^2
+  \sum_{|\alpha|\leq 2} \|(W_2, \mathrm{D}_{x'}^{\alpha}\psi, \mathrm{D}_{x'}\p_t\psi)(t)\|_{L^2(\varSigma)}^2
\\
&\qquad
+\|\p_t\psi\|_{L^2(\varSigma_t)}^2
+\varepsilon
 \sum_{2\leq |\alpha|\leq 3} \|(W_2,\mathrm{D}_{x'}W_2,\p_t W_2, \mathrm{D}_{x'}^{\alpha} \mathrm{D}_{x'} \psi,\mathrm{D}_{x'}^{\alpha}\p_t  \psi)\|_{L^2(\varSigma_t)}^2
\nonumber \\
&\quad  \leq  C(K) \bigg(\|(\bm{f},W)\|_{1,*,t}^2
+\|\p_1 W_2\|_{L^2(\varOmega_t)}^2
+
\sum_{ |\alpha|\leq 2} \|(W_2,\mathrm{D}_{x'}^{\alpha}\psi, \mathrm{D}_{x'}\p_t\psi)\|_{L^2(\varSigma_t)}^2 \bigg).
\nonumber
\end{align}
Then it follows from Gr\"{o}nwall's inequality that
\begin{align}
\nonumber&
\sum_{\langle\beta\rangle\leq 1}\|\mathrm{D}_*^{\beta}W(t)\|_{L^2(\varOmega)}^2
+  \sum_{|\alpha|\leq 2} \|(W_2, \mathrm{D}_{x'}^{\alpha}\psi, \mathrm{D}_{x'}\p_t\psi)(t)\|_{L^2(\varSigma)}^2
+\|\p_t\psi\|_{L^2(\varSigma_t)}^2
\\
&\ \ +\varepsilon
\sum_{2\leq |\alpha|\leq 3} \|(W_2,\mathrm{D}_{x'}W_2,\p_t W_2,\mathrm{D}_{x'} \mathrm{D}_{x'}^{\alpha}  \psi,\mathrm{D}_{x'}^{\alpha}\p_t  \psi)\|_{L^2(\varSigma_t)}^2
  \leq  C(K)  \| \bm{f} \|_{1,*,t}^2
.
\label{uni6}
\end{align}
Combining \eqref{uni6}  with \eqref{Reg.d} and \eqref{uni5a}, we derive the {\it uniform-in-$\varepsilon$} estimate
\begin{multline}
\|W\|_{1,*,T}+ \|\partial_1W_{\rm nc}\|_{L^2(\varOmega_T)}+\|W_{\rm nc}\|_{L^2(\varSigma_T)}\\
+\|(\psi, \mathrm{D}_{x'}\psi)\|_{H^1(\varSigma_T)}
+\sqrt{\varepsilon} \| \mathrm{D}_{x'}^{4}  \psi\|_{L^2(\varSigma_T)}\leq  C(K)  \|\bm{f}\|_{1,*,T},
\label{unest}
\end{multline}
which allows us to construct the unique solution of the linearized problem \eqref{ELP3} by passing to the limit $\varepsilon\to 0$.
Indeed, due to the last estimate, we can extract a subsequence weakly convergent to $(W,\psi )\in H^1_*(\varOmega_T)\times H^1((-\infty,T],H^2(\mathbb{R}^2))$ with $\partial_1W_{\rm nc}\in L^2(\varOmega_T)$ and $W_{\rm nc}|_{x_1=0}\in L^2(\varSigma_T)$.
Since $\partial_1W_{2}$ and $\sqrt{\varepsilon} (\p_2^4+\p_3^4)\psi$ are uniformly bounded in $L^2(\varOmega_T)$ and $L^2(\varSigma_T)$ respectively ({\it cf.}\;\eqref{unest}),
the passage to the limit $\varepsilon\to 0$ in \eqref{Reg.a}--\eqref{Reg.c} verifies that $(W,\psi )$ is a solution of the problem \eqref{ELP3}.
Moreover, the uniqueness follows from the {\it a priori} estimate \eqref{est4}.

\subsection{High-Order Energy Estimates}\label{sec:linear4}

Let us derive the high-order energy estimates for solutions of the problem \eqref{ELP3}.
Let $m\in\mathbb{N}_+$ and $\alpha=(\alpha_0,\alpha_1,\alpha_2,\alpha_3,\alpha_4)\in\mathbb{N}^5$ with $\langle \alpha \rangle:=\sum_{i=0}^{4}\alpha_i+\alpha_4\leq m$.
Apply the operator $\mathrm{D}_*^{\alpha}:=\p_t^{\alpha_0}(\sigma \p_1)^{\alpha_1}\p_2^{\alpha_2}\p_3^{\alpha_3}\p_1^{\alpha_4}$ to \eqref{ELP3a} and take the scalar product of the resulting equations with $\mathrm{D}_*^{\alpha} W$ to obtain
\begin{align}\label{est:alpha}
\mathcal{R}_{\alpha}(t)+\mathcal{Q}_{\alpha}(t)
= \int_{\varOmega}{\bm{A}}_0\mathrm{D}_*^{\alpha}W\cdot \mathrm{D}_*^{\alpha}W(t,x)\d x
\gtrsim \|\mathrm{D}_*^{\alpha}W(t)\|_{L^{2}(\varOmega)}^2,
\end{align}
where
\begin{align}
\label{Q.alpha}
\mathcal{Q}_{\alpha}(t):=\;&2\int_{\varSigma_t}\mathrm{D}_*^{\alpha}W_1\mathrm{D}_*^{\alpha}W_2 ,\\
\label{R.alpha}
\mathcal{R}_{\alpha}(t):=\;&\int_{\varOmega_t}\mathrm{D}_*^{\alpha}W\cdot \bigg(
2R_{\alpha}+\sum_{i=0}^{3}\p_i {\bm{A}}_i\mathrm{D}_*^{\alpha}W \bigg) ,
\end{align}
with $R_{\alpha}:=\mathrm{D}_*^{\alpha}\bm{f}-\mathrm{D}_*^{\alpha}({\bm{A}}_4 W)
-\sum_{i=0}^{3}[\mathrm{D}_*^{\alpha},{\bm{A}}_i\p_i]W.$
Since the estimate of $\mathcal{R}_{\alpha}(t)$ does not involve any boundary condition,
from \cite[Lemma 3.5]{TW21MR4201624}, we obtain
\begin{align}
\label{est:R.alpha}
\sum_{\langle\alpha\rangle \leq m}
 \mathcal{R}_{\alpha}(t)\leq C(K)
  \mathcal{M}_1(t),
\end{align}
where
\begin{align}
\mathcal{M}_1(t):= {\|}(\bm{f},W){\|}_{m,*,t}^2
+\mathring{\rm C}_{m+4}  \|(\bm{f},W)\|_{W_*^{2,\infty}(\varOmega_t)}^2,
\label{M1.cal}
\end{align}
with
$\mathring{\rm C}_{m}$ being defined by \eqref{C.rm:def} and
\begin{align*}
\|u\|_{W_*^{2,\infty}(\varOmega_t) }:=\sum_{\langle \alpha\rangle\leq 1}\| \mathrm{D}_*^{\alpha} u\|_{W^{1,\infty}(\varOmega_t) }.
\end{align*}

\vspace*{4mm}

\noindent{\bf $\bullet$ Case $\alpha_1>0$.}\quad
For $\alpha_1>0$, we have $\mathcal{Q}_{\alpha}(t)=0$.
Hence it follows directly from \eqref{est:alpha} and \eqref{est:R.alpha} that
\begin{align}
\sum_{\langle\alpha\rangle\leq m,\, \alpha_1>0}\|\mathrm{D}_*^{\alpha}W(t)\|_{L^2(\varOmega)}^2
\leq C(K) \mathcal{M}_1(t),
\label{est:alpha1}
\end{align}
where $\mathcal{M}_1(t)$ is given in \eqref{M1.cal}.

\vspace*{4mm}

\noindent{\bf $\bullet$ Case $\alpha_1=0$ and $\alpha_4>0$.}\quad
Next let us consider the case with $\alpha_1=0$ and $\alpha_4>0$.
From \eqref{iden5}, we have
\begin{align}
\mathcal{Q}_{\alpha}(t)\lesssim
\sum_{i=0,2,3}\|\mathrm{D}_*^{\alpha-{\bm{e}}} (\bm{f},  {\bm{A}}_4 W,{\bm{A}}_i\p_i W,{\bm{A}}_1^{(0)}\p_1W)\|_{L^2(\varSigma_t)}^2,
\nonumber 
\end{align}
for the multi-index ${\bm{e}}:=(0,0,0,0,1)$.
Then we can employ the proof of \cite[Lemma 3.7]{TW21MR4201624} to deduce
\begin{align}
&\sum_{\substack{\langle\alpha\rangle \leq m,\, \alpha_1=0,\,\alpha_4>0}}
\|\mathrm{D}_*^{\alpha} W(t)\|_{L^2(\varOmega)}^2
\nonumber\\
&\qquad\qquad \lesssim
\boldsymbol{\epsilon} \sum_{\langle \beta\rangle\leq m} \| \mathrm{D}_*^{\beta} W(t)\|_{L^2(\varOmega)}^2
+ C(\boldsymbol{\epsilon},K) \mathcal{M}_1(t)
\quad \textrm{for  }  \boldsymbol{\epsilon}\in (0,1).
\label{est:alpha2}
\end{align}


\noindent{\bf $\bullet$ Case $\alpha_1=\alpha_4=0$.}\quad
Now we let $\alpha_1=\alpha_4=0$ and $\langle \alpha\rangle\leq m$.
Then $\mathrm{D}_*^{\alpha}=\p_t^{\alpha_0}\p_2^{\alpha_2}\p_2^{\alpha_3}$ and $\alpha_{0}+\alpha_2+\alpha_3\leq m$.
Using \eqref{ELP3b}--\eqref{ELP3c}, we calculate
\begin{align} \label{est:Q3a}
\mathcal{Q}_{\alpha}(t)=\sum_{i=1}^{4}\mathcal{Q}_{\alpha}^{(i)}(t),
\end{align}
where
\begin{align*}
&\mathcal{Q}_{\alpha}^{(1)}(t):=-2\int_{\varSigma_t}
\Big\{\p_1 \mathring{q} \mathrm{D}_*^{\alpha}\psi+[\mathrm{D}_*^{\alpha},\p_1 \mathring{q} ]\psi\Big\}
 (\p_t+\mathring{v}_2\p_2+\mathring{v}_3\p_3)\mathrm{D}_*^{\alpha} \psi ,\\
&\mathcal{Q}_{\alpha}^{(2)}(t):=-2\mathfrak{s}\int_{\varSigma_t} \mathrm{D}_*^{\alpha}\bigg(\dfrac{\mathrm{D}_{x'}\psi}{|\mathring{N}|}- \dfrac{\mathrm{D}_{x'}\mathring{\varphi}\cdot\mathrm{D}_{x'}\psi}{|\mathring{N}|^3}\mathrm{D}_{x'}\mathring{\varphi}\bigg) \cdot (\p_t+\mathring{v}_2\p_2+\mathring{v}_3\p_3)\mathrm{D}_*^{\alpha} \mathrm{D}_{x'}\psi  ,\\
&\mathcal{Q}_{\alpha}^{(3)}(t):=-2\int_{\varSigma_t} \mathrm{D}_*^{\alpha}(\p_1 \mathring{q} \psi)
\Big\{[\mathrm{D}_*^{\alpha}, \mathring{v}_2\p_2+\mathring{v}_3\p_3]\psi -\mathrm{D}_*^{\alpha}\big(\p_1 \mathring{v}\cdot \mathring{N} \psi\big) \Big\} ,\\
&\mathcal{Q}_{\alpha}^{(4)}(t):=-2\mathfrak{s}\int_{\varSigma_t}
\mathrm{D}_*^{\alpha}\bigg(\dfrac{\mathrm{D}_{x'}\psi}{|\mathring{N}|}- \dfrac{\mathrm{D}_{x'}\mathring{\varphi}\cdot\mathrm{D}_{x'}\psi}{|\mathring{N}|^3}\mathrm{D}_{x'}\mathring{\varphi}\bigg)
\\
&\qquad \qquad  \qquad \qquad   \cdot \Big\{[\mathrm{D}_*^{\alpha}\mathrm{D}_{x'}, \mathring{v}_2\p_2+\mathring{v}_3\p_3]\psi-\mathrm{D}_*^{\alpha}\mathrm{D}_{x'}\big(\p_1 \mathring{v}\cdot \mathring{N} \psi\big) \Big\}.
\end{align*}
Apply integration by parts, Cauchy's and Moser-type calculus inequalities to infer
\begin{align*}
\mathcal{Q}_{\alpha}^{(1)}(t)\leq \;& -\int_{\varSigma} \p_1 \mathring{q} (\mathrm{D}_*^{\alpha} \psi)^2\mathrm{d}x'
-2\int_{\varSigma}[\mathrm{D}_*^{\alpha},\p_1 \mathring{q}]\psi \mathrm{D}_*^{\alpha} \psi
+C(K) \|\mathrm{D}_*^{\alpha} \psi\|_{L^2(\varSigma_t)}^2 \\[1mm]
\;& +\| \p_t [\mathrm{D}_*^{\alpha},\p_1 \mathring{q}] \psi \|_{L^2(\varSigma_t)}^2
+\sum_{i=2,3}\| \p_i (\mathring{v}_i [\mathrm{D}_*^{\alpha},\p_1 \mathring{q}] \psi )\|_{L^2(\varSigma_t)}^2\\
\leq \;&
C(K) \left(\|\mathrm{D}_*^{\alpha} \psi(t)\|_{L^2(\varSigma)}^2
+\|\mathrm{D}_*^{\alpha} \psi\|_{L^2(\varSigma_t)}^2
+ \| [\mathrm{D}_*^{\alpha},\p_1 \mathring{q}] \psi \|_{H^1(\varSigma_t)}^2  \right)\\[1.5mm]
\leq \;&
C(K) \left(\|\mathrm{D}_*^{\alpha} \psi(t)\|_{L^2(\varSigma)}^2
+\|\psi\|_{H^m(\varSigma_t)}^2
+\mathring{\mathrm{C}}_{m+4} \|\psi\|_{L^{\infty}(\varSigma_t)}^2\right).
\end{align*}
It follows from the Moser-type calculus inequalities that
\begin{align*}
\mathcal{Q}_{\alpha}^{(3)}(t)
\lesssim\;&
\|(\p_1 \mathring{q} \psi,\p_1 \mathring{v}\cdot \mathring{N} \psi)\|_{H^{m}(\varSigma_t)}^2
+\|[\mathrm{D}_*^{\alpha}, \mathring{v}_2\p_2+\mathring{v}_3\p_3]\psi\|_{L^{2}(\varSigma_t)}^2
\\
\leq\;&
C(K) \left(\|\psi\|_{H^m(\varSigma_t)}^2
+ \mathring{\mathrm{C}}_{m+4} \|\psi\|_{L^{\infty}(\varSigma_t)}^2 \right),\\
\mathcal{Q}_{\alpha}^{(4)}(t)
\lesssim\;&
\|  (\mathring{\rm c}_1\mathrm{D}_{x'}\psi,\mathrm{D}_{x'}\mathring{\rm c}_1\psi)\|_{H^{m}(\varSigma_t)}^2
+\|[\mathrm{D}_*^{\alpha}\mathrm{D}_{x'}, \mathring{v}_2\p_2+\mathring{v}_3\p_3]\psi\|_{L^{2}(\varSigma_t)}^2
\\
\leq\;&
C(K) \left(\|(\psi,\mathrm{D}_{x'}\psi)\|_{H^m(\varSigma_t)}^2
+ \mathring{\mathrm{C}}_{m+4} \|(\psi,\mathrm{D}_{x'}\psi)\|_{L^{\infty}(\varSigma_t)}^2 \right).
\end{align*}
For the term $\mathcal{Q}_{\alpha}^{(2)}(t)$, we calculate
\begin{align*}
&-2 \mathrm{D}_*^{\alpha}\bigg(\dfrac{\mathrm{D}_{x'}\psi}{|\mathring{N}|}- \dfrac{\mathrm{D}_{x'}\mathring{\varphi}\cdot\mathrm{D}_{x'}\psi}{|\mathring{N}|^3}\mathrm{D}_{x'}\mathring{\varphi}\bigg) \cdot (\p_t+\mathring{v}_2\p_2+\mathring{v}_3\p_3)\mathrm{D}_*^{\alpha} \mathrm{D}_{x'}\psi\\
&  =\p_t\bigg\{
-\frac{|\mathrm{D}_*^{\alpha}\mathrm{D}_{x'}\psi|^2}{|\mathring{N}|}+\frac{|\mathrm{D}_{x'}\mathring{\varphi}\cdot\mathrm{D}_*^{\alpha}\mathrm{D}_{x'}\psi|^2}{|\mathring{N}|^3}
+[\mathrm{D}_*^{\alpha},\mathring{\rm c}_1]\mathrm{D}_{x'}\psi\cdot\mathrm{D}_*^{\alpha}\mathrm{D}_{x'}\psi\bigg\}
\\
& \quad
+
\sum_{i=2,3}\p_i\bigg\{
\mathring{v}_i\bigg(-\frac{|\mathrm{D}_*^{\alpha}\mathrm{D}_{x'}\psi|^2}{|\mathring{N}|}+\frac{|\mathrm{D}_{x'}\mathring{\varphi}\cdot\mathrm{D}_*^{\alpha}\mathrm{D}_{x'}\psi|^2}{|\mathring{N}|^3}
+[\mathrm{D}_*^{\alpha},\mathring{\rm c}_1]\mathrm{D}_{x'}\psi\cdot\mathrm{D}_*^{\alpha}\mathrm{D}_{x'}\psi\bigg)\bigg\}\\
& \quad
+\mathring{\rm c}_2\mathrm{D}_*^{\alpha}\mathrm{D}_{x'}\psi\cdot \mathrm{D}_*^{\alpha}\mathrm{D}_{x'}\psi
-\bigg\{\p_t[\mathrm{D}_*^{\alpha},\mathring{\rm c}_1]\mathrm{D}_{x'}\psi+\sum_{i=2,3} \p_i(\mathring{v}_i[\mathrm{D}_*^{\alpha},\mathring{\rm c}_1]\mathrm{D}_{x'}\psi ) \bigg\}\cdot
\mathrm{D}_*^{\alpha}\mathrm{D}_{x'}\psi.
\end{align*}
Then it follows from Cauchy's inequality, integration by parts, and Moser-type calculus inequalities that
\begin{align}
\nonumber
\mathcal{Q}_{\alpha}^{(2)}(t)
\leq\;& -\mathfrak{s}\int_{\varSigma} \frac{|\mathrm{D}_*^{\alpha}\mathrm{D}_{x'}\psi|^2}{|\mathring{N}|^3}\mathrm{d}x'
+\int_{\varSigma} |[\mathrm{D}_*^{\alpha},\mathring{\rm c}_1]\mathrm{D}_{x'}\psi||\mathrm{D}_*^{\alpha}\mathrm{D}_{x'}\psi|\mathrm{d}x'\\[1mm]
\;&
+C(K)\sum_{i=2,3}\| (\p_t [\mathrm{D}_*^{\alpha}, \mathring{\rm c}_1] \mathrm{D}_{x'}\psi,\p_i (\mathring{v}_i [\mathrm{D}_*^{\alpha},\mathring{\rm c}_1] \mathrm{D}_{x'}\psi ),\mathrm{D}_*^{\alpha} \mathrm{D}_{x'}\psi)\|_{L^2(\varSigma_t)}^2
\nonumber\\
\leq\;& -\frac{\mathfrak{s}}{2}\int_{\varSigma} \frac{|\mathrm{D}_*^{\alpha}\mathrm{D}_{x'}\psi|^2}{|\mathring{N}|^3}\d x'
+C(K)\left(
\|\mathrm{D}_*^{\alpha}\mathrm{D}_{x'}\psi\|_{L^2(\varSigma_t)}^2
+\|[\mathrm{D}_*^{\alpha},\mathring{\rm c}_1] \mathrm{D}_{x'}\psi \|_{H^1(\varSigma_t)}^2
\right)
\nonumber\\
\leq\;& -\frac{\mathfrak{s}}{2}\int_{\varSigma} \frac{|\mathrm{D}_*^{\alpha}\mathrm{D}_{x'}\psi|^2}{|\mathring{N}|^3}\d x'
+C(K) \left(\|\mathrm{D}_{x'}\psi\|_{H^m(\varSigma_t)}^2
+\mathring{\mathrm{C}}_{m+4} \|\mathrm{D}_{x'}\psi\|_{L^{\infty}(\varSigma_t)}^2\right).
\nonumber
\end{align}
Substituting the above estimates of $\mathcal{Q}_{\alpha}^{(i)}(t)$, for $i=1,2,3,4$, into \eqref{est:Q3a}, we get
\begin{align}
\mathcal{Q}_{\alpha}(t)
+\frac{\mathfrak{s}}{2}\int_{\varSigma} \frac{|\mathrm{D}_*^{\alpha}\mathrm{D}_{x'}\psi|^2}{|\mathring{N}|^3}\d x'
\leq
C(K) \Big(\|\mathrm{D}_*^{\alpha}\psi(t)\|_{L^2(\varSigma)}^2
+\mathcal{M}_2(t)\Big)
\label{est:Q3b}
\end{align}
for all $\alpha=(\alpha_0,0,\alpha_2,\alpha_3,0)\in\mathbb{N}^5$ with $|\alpha|\leq m$, where
\begin{align}
\mathcal{M}_2(t):=\|( \psi,\mathrm{D}_{x'}\psi)\|_{H^m(\varSigma_t)}^2 +\mathring{\mathrm{C}}_{m+4} \|( \psi,\mathrm{D}_{x'}\psi)\|_{L^{\infty}(\varSigma_t)}^2.
\label{M2.cal}
\end{align}

Let us estimate the first term on the right-hand side of \eqref{est:Q3b}.
If $|\alpha|\leq m-1$ or $\alpha_2+\alpha_3\geq 1$, then 
\begin{align}
\|\mathrm{D}_*^{\alpha}\psi(t)\|_{L^2(\varSigma)}^2
\lesssim \int_{\varSigma_t} |\mathrm{D}_*^{\alpha}\psi||\p_t\mathrm{D}_*^{\alpha}\psi|
\lesssim \|( \psi,\mathrm{D}_{x'}\psi)\|_{H^m(\varSigma_t)}^2.
\label{est:Q3c}
\end{align}
If $\alpha_2=\alpha_3=0$ and $\alpha_0=m$, then it follows from \eqref{ELP3b} that
$$\mathrm{D}_*^{\alpha}\psi=\p_t^{m-1}(W_2-\mathring{v}_2\p_2\psi-\mathring{v}_3\p_3\psi+\p_1\mathring{v}\cdot\mathring{N}\psi),$$
which yields
\begin{align}
\nonumber
 \|\mathrm{D}_*^{\alpha}\psi(t)\|_{L^2(\varSigma)}^2 & \nonumber
\lesssim \|\p_t^{m-1}W_2(t)\|_{L^2(\varSigma)}^2
+\|\mathring{v}_2\p_2\psi+\mathring{v}_3\p_3\psi-\p_1\mathring{v}\cdot\mathring{N}\psi\|_{H^m(\varSigma_t)}^2\\
& \lesssim  \|\p_t^{m-1}W_2(t)\|_{L^2(\varSigma)}^2
+\mathcal{M}_2(t),
\label{est:Q3d}
\end{align}
with $\mathcal{M}_2(t)$ being given in \eqref{M2.cal}.
For the first term on the right-hand side, we use integration by parts to obtain
\begin{align}
\|\p_t^{m-1}W_2(t)\|_{L^2(\varSigma)}^2
\lesssim \;& \boldsymbol{\epsilon}\|\p_t^{m-1}\p_1W_2(t)\|_{L^2(\varOmega)}^2 + {\boldsymbol{\epsilon}^{-1}}\| \p_t^{m-1}W_2(t)\|_{L^2(\varOmega)}^2 \nonumber\\
\lesssim\;& \boldsymbol{\epsilon}\|\p_t^{m-1}\p_1W_2(t)\|_{L^2(\varOmega)}^2
+ {\boldsymbol{\epsilon}^{-1}}\| W_2\|_{m,*,t}^2 \quad
\label{est:Q3d1}
\end{align}
$\textrm{for } \boldsymbol{\epsilon}\in (0,1).$
From \eqref{iden5} and \eqref{decom}, we infer
\begin{align}
\nonumber
 \|\p_t^{m-1}\p_1W_2(t)\|_{L^2(\varOmega)}^2
 \lesssim\;&
\sum_{\langle \beta\rangle\leq m} \| \mathrm{D}_*^{\beta} W(t)\|_{L^2(\varOmega)}^2
+  \|(\bm{f},{\bm{A}}_4 W)\|_{m,*,t}^2\\
&+\sum_{i=0,2,3}\| ([\p_t^{m-1},{\bm{A}}_i\p_i] W ,[\p_t^{m-1},{\bm{A}}_1^{(0)}\p_1] W )(t)\|_{L^2(\varOmega)}^2
\nonumber\\
 \lesssim\;&
 \sum_{\langle \beta\rangle\leq m} \| \mathrm{D}_*^{\beta} W(t)\|_{L^2(\varOmega)}^2
 +\mathcal{M}_1(t).
 \label{est:Q3d2}
\end{align}
Plug \eqref{est:Q3d1}--\eqref{est:Q3d2} into \eqref{est:Q3d} and combine the resulting estimate with \eqref{est:Q3b}--\eqref{est:Q3c} to deduce
\begin{align}
&\mathcal{Q}_{\alpha}(t)+\|\mathrm{D}_*^{\alpha}\psi(t)\|_{L^2(\varSigma)}^2
+\frac{\mathfrak{s}}{2}\int_{\varSigma} \frac{|\mathrm{D}_*^{\alpha}\mathrm{D}_{x'}\psi|^2}{|\mathring{N}|^3}\d x'\nonumber\\
&\quad \leq  C(K)\bigg(\boldsymbol{\epsilon} \sum_{\langle \beta\rangle\leq m} \| \mathrm{D}_*^{\beta} W(t)\|_{L^2(\varOmega)}^2
+C(\boldsymbol{\epsilon}) \mathcal{M}_1(t)+C(\boldsymbol{\epsilon}) \mathcal{M}_2(t)\bigg)
\label{est:Q3}
\end{align}
for all $\alpha\in \mathbb{N}^5$ with $ \alpha_{1}=\alpha_4=0$ and $\langle \alpha\rangle\leq m$,
where
$\mathcal{M}_1(t)$ and $\mathcal{M}_2(t)$ are defined by \eqref{M1.cal} and \eqref{M2.cal}, respectively.
Substituting \eqref{est:R.alpha} and \eqref{est:Q3} into \eqref{est:alpha} leads to
\begin{align}
&\sum_{\substack{\langle\alpha\rangle \leq m,\, \alpha_1=\alpha_4=0}}
\left(\|\mathrm{D}_*^{\alpha} W(t)\|_{L^2(\varOmega)}^2
+\|(\mathrm{D}_*^{\alpha}\psi,\mathrm{D}_*^{\alpha}\mathrm{D}_{x'}\psi)(t)\|_{L^2(\varSigma)}^2
\right)
\nonumber\\
&\qquad\qquad \leq
C(K)\bigg(\boldsymbol{\epsilon} \sum_{\langle \beta\rangle\leq m} \| \mathrm{D}_*^{\beta} W(t)\|_{L^2(\varOmega)}^2
+C(\boldsymbol{\epsilon}) \mathcal{M}_1(t)+C(\boldsymbol{\epsilon}) \mathcal{M}_2(t)\bigg)
\label{est:alpha3}
\end{align}
for $\boldsymbol{\epsilon}\in (0,1).$

\subsection{Proof of Theorem \ref{thm:linear}} \label{sec:linear5}

Combining \eqref{est:alpha1}--\eqref{est:alpha2} and \eqref{est:alpha3}, we take $\boldsymbol{\epsilon}>0$ sufficiently small to derive
\begin{align}
\mathcal{I}(t)\leq C(K) \mathcal{M}_1(t)+C(K) \mathcal{M}_2(t) ,
\label{est:alpha4}
\end{align}
where $\mathcal{M}_1(t)$ and $\mathcal{M}_2(t)$ are given in \eqref{M1.cal} and \eqref{M2.cal}, respectively, and
\begin{align}
\mathcal{I}(t):=\sum_{ \langle\alpha\rangle \leq m } \|\mathrm{D}_*^{\alpha} W(t)\|_{L^2(\varOmega)}^2
+\sum_{\substack{\langle\alpha\rangle \leq m,\, \alpha_1=\alpha_4=0}}
\|(\mathrm{D}_*^{\alpha}\psi,\mathrm{D}_*^{\alpha}\mathrm{D}_{x'}\psi)(t)\|_{L^2(\varSigma)}^2.
\nonumber 
\end{align}
It follows from definition that, for $s\in\mathbb{N}$,
\begin{align}   \label{est:psi}
\|{\mathring{\varphi}}\|_{H^s(\varSigma_t)}  \leq  {\|}\mathring{\varPsi}{\|}_{s,*,t}\lesssim \|{\mathring{\varphi}}\|_{H^s(\varSigma_t)} ,
\
\|\psi\|_{H^s(\varSigma_t)}\leq  {\|}\varPsi{\|}_{s,*,t} \lesssim \|\psi\|_{H^s(\varSigma_t)}.
\end{align}
Since
\begin{align} \nonumber 
\int_{0}^t\mathcal{I}({\tau})\d{\tau}={\|}W{\|}_{m,*,t}^2
+\|(\psi,\mathrm{D}_{x'}\psi)\|_{H^m(\varSigma_t)}^2,
\end{align}
we apply Gr\"{o}nwall's inequality to \eqref{est:alpha4} and infer
\begin{align}
\label{I.cal2}
\mathcal{I}(t)\leq C(K) \mathrm{e}^{C(K)T} \mathcal{N}(T)
\quad \textrm{for } t \in [0,T],
\end{align}
where
\begin{align} \nonumber
\mathcal{N}(t):={\|}\tilde{f}{\|}_{m,*,t}^2+\mathring{\rm C}_{{m+4}} \left(\|(\tilde{f},W)\|_{W_*^{2,\infty}(\varOmega_t)}^2 +\|(\psi,\mathrm{D}_{x'}\psi)\|_{L^{\infty}(\varSigma_t)}^2\right).
\end{align}
It follows from the embedding inequalities (see, {\it e.g.}, \cite[Lemma 3.3]{TW21MR4201624}) that
\begin{align}\label{M.cal1}
\mathcal{N}(T)\lesssim {\|}\tilde{f}{\|}_{m,*,T}^2+\mathring{\rm C}_{{m+4}} \left({\|}(\tilde{f},W){\|}_{6,*,T}^2 +\|\psi\|_{H^{3}(\varSigma_T)}^2\right).
\end{align}
Recalling the definition \eqref{C.rm:def} for $\mathring{\rm C}_{{m+4}}$, we integrate \eqref{I.cal2} over $[0,T]$ and take $T>0$ sufficiently small to obtain
\begin{multline}
{\|}W{\|}_{m,*,T}^2+\|(\psi,\mathrm{D}_{x'}\psi)\|_{H^m(\varSigma_T)}^2
\leq   C(K)T\mathrm{e}^{C(K)T}  \Big\{
{\|}\tilde{f}{\|}_{m,*,T}^2
\\
+ {\|}(\mathring{V},\mathring{\varPsi}){\|}_{{m+4},*,T}^2\Big({{\|}(\tilde{f},W){\|}_{6,*,T}^2} +\|\psi\|_{H^{3}(\varSigma_T)}^2\Big)
\Big\}
\ \ \textrm{for }  m\geq 6.   \label{est:t1}
\end{multline}
Using \eqref{est:t1} with $m=6$, \eqref{H:thm.linear}, \eqref{est:psi}, and the embedding $ H_*^{9}(\varOmega_T)\hookrightarrow W^{3,\infty}(\varOmega_T)$,
we can find a  suitably small constant $T_0>0$, depending on $K_0$, such that if $0<T\leq T_0$, then
\begin{align}\nonumber
{\|}W{\|}_{6,*,T}^2+\|(\psi,\mathrm{D}_{x'}\psi)\|_{H^6(\varSigma_T)}^2
\leq C(K_0) {\|}\tilde{f}{\|}_{6,*,T}^2.
\end{align}
Substitute the last estimate into \eqref{est:t1} to derive
\begin{align}\nonumber
& {\|}W{\|}_{m,*,T}^2+\|(\psi,\mathrm{D}_{x'}\psi)\|_{H^m(\varSigma_T)}^2 \\
&\quad  \leq C(K_0)
\label{est:t3} \left({\|}\tilde{f}{\|}_{m,*,T}^2
+  {\|}(\mathring{V},\mathring{\varPsi}){\|}_{{m+4},*,T}^2{\|}\tilde{f}{\|}_{6,*,T}^2\right)
\ \ \textrm{for }m\geq 6.
\end{align}

In Section \ref{sec:linear3}, we have proved that for $(f,g)\in H_*^1(\varOmega_T)\times H^2(\varSigma_T)$ vanishing in the past, {the} problem \eqref{ELP3} admits a unique solution $(W,\psi)\in H_*^1(\varOmega_T)\times H^1(\varSigma_T)$.
Applying the arguments in \cite[Chapter 7]{CP82MR0678605} and using the energy estimate \eqref{est:t3},
one can establish the existence and uniqueness of solutions $(W,\psi)$ of {the} problem \eqref{ELP3} in $H_*^m(\varOmega_T)\times H^{m}(\varSigma_T)$ for $m\geq 6$.

It remains to prove the tame estimates \eqref{tame} of {the} problem \eqref{ELP1}.
For this purpose, we use \eqref{V.natural} to derive
\begin{align}
\nonumber
{\|}\dot{V}{\|}_{m,*,T}^2 \leq\;& C(K_0)\Big(
{\|}W{\|}_{m,*,T}^2+\|W\|_{W_*^{1,\infty}(\varOmega_T) }^2\mathring{\rm C}_{m+2}
+\|g\|_{H^{m}(\varSigma_T)}^2
\Big),\\
\nonumber
{\|}\tilde{f}{\|}_{m,*,T}^2\leq\;&  C(K_0)\Big(
{\|} {f}{\|}_{m,*,T}^2+
\|V_{\natural}\|_{W_*^{2,\infty}(\varOmega_T) }^2\mathring{\rm C}_{m+2}
+\|g\|_{H^{m+1}(\varSigma_T)}^2
\Big),
\end{align}
which combined with \eqref{est:t3}, \eqref{est:psi}, and the embedding inequalities
yield the tame estimates \eqref{tame}. The proof of Theorem \ref{thm:linear} is complete.

\section{Proof of Theorem \ref{thm:main}} \label{sec:proof}

In this section, we prove the main theorem of this paper by using a Nash--Moser iteration to handle the loss of regularity from the coefficients ({\it i.e.}, the basic states) to the solutions in \eqref{tame}.
We refer the reader to {\sc Alinhac--G{{\'e}}rard} \cite[Chapter 3]{AG07MR2304160} and {\sc Secchi} \cite{S16MR3524197} for a general description of the method.

\subsection{Reducing the Nonlinear Problem} \label{sec:proof1}

In order to employ Theorem \ref{thm:linear}, we will reformulate the nonlinear problem \eqref{NP1} into a problem with zero initial data by absorbing the initial data into the interior equations via the approximate solutions. Before this, we introduce the compatibility conditions on the initial data that are necessary in the construction of the approximate solutions.

Let $m\in\mathbb{N}$ with $m\geq 3$.
Suppose that the initial data \eqref{NP1c}
satisfy
$\widetilde{U}_0:=U_0-\widebar{U} \in H^{m+3/2}(\varOmega)$,
$\varphi_0\in H^{m+2}(\mathbb{R}^{2})$,
and
$\|\varphi_0\|_{L^{\infty}(\mathbb{R}^{2})}\leq 1/4$.
Then
\begin{align}\label{CA1}
\p_1 \varPhi_0 \geq \frac{3}{4}\ \ \textrm{in } \varOmega,\quad
\textrm{for } \varPhi_0:= x_1+{\varPsi}_0,
\end{align}
where ${\varPsi}_0:=\chi(x_1) \varphi_0$ and $\chi\in C_0^{\infty}(\mathbb{R})$ satisfies \eqref{chi}.
We denote the perturbation $U -\widebar{U} $ by $\widetilde{U} $. Then we define
$(\widetilde{U}_{(j)},\,\varphi_{(j)})
:= (\p_t^{j}\widetilde{U},\,\p_t^{j}\varphi)|_{t=0}$ and
${\varPsi}_{(j)}:=\chi(x_1) \varphi_{(j)}$ for any integer $j$.
Setting $\xi:=\mathrm{D}_{x'}\varphi\in\mathbb{R}^2$ and
$\mathcal{W}:=(\widetilde{U},\nabla \widetilde{U},\mathrm{D}{\varPsi})^{\mathsf{T}}\in\mathbb{R}^{36}$,
we can rewrite the second condition in \eqref{NP1b} and the equations \eqref{NP1a} as
\begin{align}\label{eq.refor}
q=\mathfrak{s}\mathrm{D}_{x'}\cdot \mathfrak{f}(\xi),\quad
\p_t \widetilde{U}=\bm{G}\big(\mathcal{W}\big),
\end{align}
where
\begin{align} \label{f.frak}
	\mathfrak{f}(\xi):=\frac{\xi}{\sqrt{1+|\xi|^2}},
\end{align}
and $\bm{G}$ is a suitable $C^{\infty}$--function vanishing at the origin.
Applying the generalized Fa\`a di Bruno's formula (see \cite[Theorem 2.1]{M00MR1781515}) and the Leibniz's rule to \eqref{eq.refor} and the first condition in \eqref{NP1b}, respectively,
we compute
\begin{align}
\label{compat1}
q_{(j)}|_{\varSigma}=\;&
\sum_{\substack{\alpha_{k}\in\mathbb{N}^{2}\\ |\alpha_1|+\cdots+j |\alpha_{j}|=j}} \mathfrak{s}\mathrm{D}_{x'}\cdot
\left(\mathrm{D}^{\alpha_1+\cdots+\alpha_j}\mathfrak{f}\big(\xi_{(0)}\big)\prod_{k=1}^j\frac{j!}{\alpha_{k}!}
\left(\frac{\xi_{(k)}}{k!}\right)^{\alpha_{k}}\right),
\\
\label{trace.id2}
\widetilde{U}_{(j+1)}
=\;&
\sum_{\substack{\alpha_{k}\in\mathbb{N}^{35}\\ |\alpha_1|+\cdots+j |\alpha_{j}|=j}}
\mathrm{D}^{\alpha_1+\cdots+\alpha_j}\bm{G}\big(\mathcal{W}_{(0)}\big)\prod_{k=1}^j\frac{j!}{\alpha_{k}!}
\left(\frac{\mathcal{W}_{(k)}}{k!}\right)^{\alpha_{k}},\\
\label{trace.id1}
\varphi_{(j+1)} =\;& v_{1(j)}\big|_{\varSigma}-\sum_{k=0}^{j} \sum_{i=2,3}
\frac{j!}{(j-k)!k!} v_{i(k)}\big|_{\varSigma}\p_i\varphi_{(j-k)},
\end{align}
where $\xi_{(k)}:=\mathrm{D}_{x'}\varphi_{(k)}$ and
$\mathcal{W}_{(k)}:=(\widetilde{U}_{(k)},\nabla \widetilde{U}_{(k)},\mathrm{D} {\varPsi}_{(k)})^{\mathsf{T}}$.
The identities \eqref{trace.id2}--\eqref{trace.id1} can determine $\widetilde{U}_{(j)}$ and ${\varphi}_{(j)}$ for integers $j$ inductively. More precisely, we have the following lemma whose proof can be found from \cite[Lemma 4.2.1]{M01MR1842775}.

\begin{lemma}\label{lem:CA1}
 Let $m\in\mathbb{N}$ with $m\geq 3$, $\widetilde{U}_0:= U_0-\widebar{U} \in H^{m+3/2}(\varOmega)  $,
 and $ \varphi_0  \in   H^{m+2}(\mathbb{R}^{2})$.
 Then the equations \eqref{trace.id1} and \eqref{trace.id2} determine
 $\widetilde{U}_{(j)}\in H^{m+3/2-j}(\varOmega)$ and $\varphi_{(j)}\in H^{m+2-j}(\mathbb{R}^{2})$, for $j=1,\ldots,m$,   satisfying
 \begin{align}
  \sum_{j=0}^{m}\Big(\big\|\widetilde{U}_{(j)}\big\|_{H^{m+3/2-j}(\varOmega)} +\big\|{\varphi}_{(j)}\big\|_{H^{m+2-j}(\mathbb{R}^{2})}\Big)
 \nonumber 
 \leq C M_0,
 \end{align}
where constant $C>0$ depends only on $m$,
$\|\widetilde{U}_{0}\|_{W^{1,\infty}(\varOmega)}$,
and $\|{\varphi}_{0}\|_{W^{1,\infty}(\mathbb{R}^{2})}$, and
 \begin{align}
 \label{M0}
 M_0:=\big\|\widetilde{U}_0\big\|_{H^{m+3/2}(\varOmega)}
 +\|\varphi_0\|_{H^{m+2}(\mathbb{R}^{2})}.
 \end{align}
\end{lemma}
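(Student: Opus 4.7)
The plan is to prove the lemma by finite induction on $j \in \{0,1,\ldots,m\}$, using the Fa\`a di Bruno expansion \eqref{trace.id2} to construct $\widetilde{U}_{(j+1)}$ and the Leibniz identity \eqref{trace.id1} to construct $\varphi_{(j+1)}$ at each step, and then controlling the resulting norms by Moser-type product and composition inequalities in Sobolev spaces. The base case $j=0$ is immediate from the hypotheses on $(\widetilde{U}_0,\varphi_0)$.

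For the inductive step, I suppose that for some $j$ with $0 \leq j < m$ we have obtained $\widetilde{U}_{(k)} \in H^{m+3/2-k}(\varOmega)$ and $\varphi_{(k)} \in H^{m+2-k}(\mathbb{R}^{2})$ for every $k \leq j$ together with the claimed bound. Because $\varPsi_{(k)}(x) = \chi(x_1)\varphi_{(k)}(x')$ with $\chi \in C^\infty_0(\mathbb{R})$, the tensor-product structure gives $\varPsi_{(k)} \in H^{m+2-k}(\varOmega)$ and hence $\mathrm{D}\varPsi_{(k)} \in H^{m+1-k}(\varOmega)$; combined with $\nabla\widetilde{U}_{(k)} \in H^{m+1/2-k}(\varOmega)$, every component of $\mathcal{W}_{(k)}$ lies in $H^{m+1/2-k}(\varOmega)$. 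Since $m\geq 3$ implies $m+1/2 > 3/2 = n/2$ with $n=3$, $\mathcal{W}_{(0)} \in L^\infty(\varOmega)$, and the classical Moser composition estimate for smooth $\bm{G}$ yields $(\mathrm{D}^{|\alpha|}\bm{G})(\mathcal{W}_{(0)}) \in H^{m+1/2}(\varOmega)$ with norm controlled by a polynomial expression in $M_0$. In each product $\prod_{k=1}^{j}\mathcal{W}_{(k)}^{\alpha_k}$ appearing in \eqref{trace.id2}, the constraint $\sum_k k|\alpha_k|=j$ together with $m\geq 3$ ensures that at most one factor is of borderline regularity; assigning that factor the $H^{m+1/2-j}$ norm and bounding all remaining factors in $L^\infty$ through the Sobolev embedding $H^{s}\hookrightarrow L^\infty$ for $s>3/2$, the Moser product rule delivers $\widetilde{U}_{(j+1)} \in H^{m+1/2-j}(\varOmega) = H^{m+3/2-(j+1)}(\varOmega)$ with the required estimate.

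For $\varphi_{(j+1)}$, the identity \eqref{trace.id1} expresses it in terms of traces. By the trace theorem, $v_{1(j)}|_{\varSigma} \in H^{m+1-j}(\mathbb{R}^2)$ with norm controlled by $\|\widetilde{U}_{(j)}\|_{H^{m+3/2-j}(\varOmega)}$, and each product $v_{i(k)}|_{\varSigma}\,\p_i\varphi_{(j-k)}$ in the sum consists of factors in $H^{m+1-k}(\mathbb{R}^{2})$ and $H^{m+1-(j-k)}(\mathbb{R}^{2})$. The Moser product rule on $\mathbb{R}^{2}$ (with critical Sobolev exponent $1$, below which neither factor ever descends for $m\geq 3$) then places each such term in $H^{m+1-j}(\mathbb{R}^{2}) = H^{m+2-(j+1)}(\mathbb{R}^{2})$, with norm bounded by the appropriate product of inductive quantities. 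Closing the induction gives the full statement.

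The main obstacle is purely the careful bookkeeping of regularity indices through the Fa\`a di Bruno expansion \eqref{trace.id2} and the Leibniz identity \eqref{trace.id1}. Because $m\geq 3$ keeps every regularity index encountered strictly above the relevant critical Sobolev threshold, all Moser-type product and composition inequalities close without any delicate borderline analysis, and the argument is a standard one, essentially identical to \cite[Lemma 4.2.1]{M01MR1842775}.
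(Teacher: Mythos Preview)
Your proposal is correct and follows precisely the standard inductive argument via Moser-type product and composition estimates that the paper itself invokes by citing \cite[Lemma 4.2.1]{M01MR1842775} in lieu of a proof. There is nothing to add: the paper does not give its own argument, and your sketch is exactly the one it defers to.
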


We are now ready to introduce the compatibility conditions on the initial data.

\begin{definition}
 \label{def:1}
Let $m\geq 3$ be an integer.
Assume that $\widetilde{U}_0:=U_0-\widebar{U}\in H^{m+3/2}(\varOmega) $ and $\varphi_0 \in H^{m+2}(\mathbb{R}^{2})$ satisfy \eqref{CA1}.
If the functions $\widetilde{U}_{(j)}$ and $\varphi_{(j)}$ determined by \eqref{trace.id2}--\eqref{trace.id1} satisfy \eqref{compat1} for $j=0,\ldots,m$,
then we say that the initial data $(U_0,\varphi_0)$ are compatible up to order $m$.
\end{definition}

Imposing the above compatibility conditions on the initial data, we can construct the approximate solution in the next lemma. We omit the detailed proof since it is similar to that of \cite[Lemma 4.2]{TW21MR4201624}.

\begin{lemma} \label{lem:app}
Let $m\geq 3$ be an integer.
Assume that the initial data $(U_0,\varphi_0)$ satisfy the constraint \eqref{inv2b},
$\widetilde{U}_0:=U_0-\widebar{U}\in H^{m+3/2}(\varOmega)$,
$\varphi_0 \in  H^{m+2}(\mathbb{R}^{2})$,
and the compatibility conditions up to order $m$.
Then we can find positive constants $T_1(M_0)$ and $C(M_0)$ ({\it cf.}~\eqref{M0}),
such that if $0<T\leq T_1(M_0)$, then there exist $U^{a}$ and $\varphi^a$ satisfying
 \begin{alignat}{3}
 &\big\|\widetilde{U}^{a}\big\|_{H^{m+1}(\varOmega_T)}
 +\|\varphi^a\|_{H^{m+5/2}(\varSigma_T)}  \leq C (M_0),
 \quad   \label{app1a}\\
 \label{app1b}
 &\rho(U^a)\in(\rho_*,\rho^*),\quad
 \p_1\varPhi^{a}\geq \frac{5}{8}\qquad \textrm{in }  \varOmega_T,
 \end{alignat}
 where $\widetilde{U}^{a}:=U^{a}-\widebar{U}$ and
 $\varPhi^{a}:=x_1 +\varPsi^{a} $ with $\varPsi^{a} :=\chi(x_1)\varphi^{a}$.
 Moreover,
 \begin{alignat}{3}
 \label{app2a}
 &\p_t^k\mathbb{L}(U^{a},\varPhi^{a})\big|_{t=0}=0\qquad
 \textrm{for } k=0,\ldots,m-1,\quad  &&\textrm{in } \varOmega,\\
 &\label{app2b}\mathbb{B}(U^a,\varphi^a)=0,
 \qquad H^{a}\cdot N^a=0\quad
 &&\textrm{on }  \varSigma_T,\\
 &\label{app2c}  (U^{a},\varphi^a)\big|_{t=0}=(U_0,\varphi_0),  &&\\
 \nonumber 
 &\big(\p_t^{ { \varPhi}^{a}}+v^{a}\cdot\nabla^{ { \varPhi}^{a}}\big)
 H^{a}-  ({H}^{a}\cdot\nabla^{ {\varPhi}^{a}} ) {v}^{a}
 + {H}^{a}\nabla^{ {\varPhi}^{a}} \cdot {v}^{a}=0
 \quad &&\textrm{in } \varOmega_T,
 \end{alignat}
 where we denote
 $ {N}^{a}:=(1,-\p_{2} \varPsi^{a}, -\p_{3} \varPsi^{a})^{\mathsf{T}}$ and
 $\nabla^{ {\varPhi}}:=(\p_1^{\varPhi},\p_2^{\varPhi},\p_3^{\varPhi})^{\mathsf{T}}$
 with $\p_t^{\varPhi}$ and $\p_i^{\varPhi}$ defined by \eqref{differential}.
\end{lemma}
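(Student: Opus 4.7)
The plan is to build $(U^a,\varphi^a)$ by Sobolev lifting from the Taylor data at $t=0$ supplied by Lemma~\ref{lem:CA1}, and then adjust the lifts on the boundary so that the boundary conditions \eqref{app2b} and the magnetic constraint $H^a\cdot N^a=0$ hold identically on $\varSigma_T$ (not only at $t=0$), after which \eqref{app2a} follows automatically from the way the Taylor coefficients were defined.

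First I would apply the inverse trace theorem (see, e.g., \cite{M01MR1842775}) to produce $\varphi^a\in H^{m+5/2}(\varSigma_T)$ whose temporal jets at $t=0$ coincide with $\varphi_{(0)},\ldots,\varphi_{(m)}$, with $\|\varphi^a\|_{H^{m+5/2}(\varSigma_T)}\leq C(M_0)$. Choosing $T_1(M_0)$ small enough, $\|\varphi^a\|_{L^\infty(\varSigma_T)}\le 1/2$ so that \eqref{CA1} together with \eqref{chi} yields $\p_1\varPhi^a\ge 5/8$ on $\varOmega_T$. In the same way I would lift $\widetilde{U}_0,\widetilde{U}_{(1)},\ldots,\widetilde{U}_{(m)}$ to a first candidate $\widetilde{U}^a\in H^{m+1}(\varOmega_T)$; the hyperbolicity part of \eqref{app1b} then follows from continuity by shrinking $T$, and \eqref{app2c} is built in.

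Second, the boundary conditions in \eqref{app2b} and the tangency constraint need to hold at every $t$, not merely at $t=0$. I would enforce them by redefining the relevant traces: the kinematic condition $\p_t\varphi^a = v^a\cdot N^a$ fixes $v_1^a|_{\varSigma_T}$; the tangency $H^a\cdot N^a=0$ fixes $H_1^a|_{\varSigma_T}$; the surface-tension condition $q^a=\mathfrak{s}\mathcal{H}(\varphi^a)$ fixes $q^a|_{\varSigma_T}$. Each corrected trace is then extended back into $\varOmega_T$ by an inverse trace operator. The compatibility conditions up to order $m$ guarantee that at $t=0$ the corrected traces agree with the uncorrected ones to order $m-1$ in $t$, so the Taylor data $\widetilde{U}_{(j)}$, $\varphi_{(j)}$ are not destroyed; Moser-type estimates then preserve the bound \eqref{app1a}.

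Third, the vanishing \eqref{app2a} is a direct consequence of \eqref{trace.id2}, which was designed so that $\p_t^j\widetilde{U}|_{t=0}$ reproduces the $j$-th time derivative of the interior equation at $t=0$ for $0\le j\le m-1$. For the magnetic evolution identity in $\varOmega_T$, I would finally modify $H^a$ by solving the linear transport-type equation obtained from \eqref{MHD1c} with the already-defined $v^a$ and initial datum $H_0$; under the constraints $\nabla\cdot H_0=0$ and $H_0\cdot N_0=0$ this propagation preserves both \eqref{inv1b}--\eqref{inv2b} and the stated identity on the short interval $[0,T]$ (see \cite[Appendix A]{T09ARMAMR2481071}), while the regularity $H^a\in H^{m+1}(\varOmega_T)$ follows from standard linear hyperbolic estimates. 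The principal obstacle is bookkeeping: successive corrections of $v_1^a$, $H_1^a$, $q^a$ and finally $H^a$ must be carried out in an order that preserves both the $H^{m+1}(\varOmega_T)$ norm and the Taylor jets at $t=0$, and this is precisely where the full strength of the compatibility conditions up to order $m$ is used.
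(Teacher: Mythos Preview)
Your proposal is correct and matches the construction that the paper defers to \cite[Lemma~4.2]{TW21MR4201624}: lift the Taylor data of Lemma~\ref{lem:CA1} via the inverse trace theorem, correct the boundary traces of $v_1^a$, $H_1^a$, $q^a$ so that \eqref{app2b} holds identically, and finally replace $H^a$ by the solution of the linear magnetic transport equation with datum $H_0$, which automatically propagates $H^a\cdot N^a=0$ on $\varSigma_T$ by \cite[Appendix~A]{T09ARMAMR2481071}. The order of corrections you describe and the role of the compatibility conditions in preserving the jets at $t=0$ are exactly right; the drop from $H^{m+2}$ to $H^{m+1}$ in \eqref{app1a} is accounted for by the surface-tension correction $q^a|_{\varSigma_T}=\mathfrak{s}\mathcal{H}(\varphi^a)$, which costs two tangential derivatives of $\varphi^a\in H^{m+5/2}(\varSigma_T)$.
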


The vector function $(U^a,\varphi^a)$ constructed in the Lemma \ref{lem:app} is called the {\it approximate solution} to the problem \eqref{NP1}.
Let us define
\begin{align}\label{f^a}
f^{a}:=\left\{\begin{aligned}
& -\mathbb{L}(U^{a},\varPhi^{a}) \quad &\textrm{if }t>0,\\
& 0 \quad &\textrm{if }t<0.
\end{aligned}\right.
\end{align}
Then it follows from \eqref{app1a} and \eqref{app2a} that
$f^{a}\in H^{m}(\varOmega_T)$ and
\begin{align}\label{f^a:est}
\|f^{a}\|_{ H^{m}(\varOmega_T)}\leq
\delta_0\left(T\right),
\end{align}
where $\delta_0(T)\to 0$ as $T\to 0$.
By virtue of \eqref{app2a}--\eqref{f^a},
we infer that  $(U,\varphi)=(U^a,\varphi^a)+(V,\psi)$ solves the nonlinear problem \eqref{NP1} on $[0,T]\times \varOmega$,
provided $V$, $\psi$, and $\varPsi:=\chi(x_1)\psi$ are solutions of the problem
\begin{align} \label{P.new}
\left\{
\begin{aligned}
&\mathcal{L}(V,\varPsi):=\mathbb{L}(U^a+V,\varPhi^a+\varPsi)-\mathbb{L}(U^a,\varPhi^a)=f^a
\ &&\textrm{in }\varOmega_T,\\
&\mathcal{B}(V,\psi):=\mathbb{B}(U^a+V,\varphi^a+\psi)=0
\ &&\textrm{on }\varSigma_T,\\
&(V,\psi)=0,\ &&\textrm{if }t< 0.
\end{aligned}\right.
\end{align}

\subsection{Nash--Moser Iteration Scheme} \label{sec:proof2}

We first quote the following result from \cite[Proposition 10]{T09ARMAMR2481071}.

\begin{proposition} \label{pro:smooth}
Let $T>0$ be a real number and $m\geq 3$ be an integer.
Denote $\mathscr{F}_*^s(\varOmega_T):=\big\{u\in H_*^{s}(\varOmega_T):u=0\textrm{ for }t<0\big\}$.
Then there is a family of smoothing operators $\{\mathcal{S}_{\theta}\}_{\theta\geq 1}:  \,\mathscr{F}_*^3(\varOmega_T) \to \bigcap_{s\geq 3}\mathscr{F}_*^s(\varOmega_T)$, such that
\begin{subequations}\label{smooth.p1}
\begin{alignat}{2}
&  \|\mathcal{S}_{\theta} u\|_{k,*,T}\leq C \theta^{(k-j)_+}\|u\|_{j,*,T}
&& \textrm{for  \;}k,j=1,\ldots,m,    \label{smooth.p1a}  \\[1.5mm]
&  \|\mathcal{S}_{\theta} u-u\|_{k,*,T}\leq C  \theta^{k-j}\|u\|_{j,*,T}
&& \textrm{for  \;}1\leq k\leq j \leq m,   \label{smooth.p1b} \\
&  \left\|\frac{\d}{\d \theta}\mathcal{S}_{\theta} u\right\|_{k,*,T}
\leq C \theta^{k-j-1}\|u\|_{j,*,T}
&\quad&\textrm{for \;}k,j=1,\ldots,m,    \label{smooth.p1c}
\end{alignat}
\end{subequations}
where $k,j$ are integers, $(k-j)_+:=\max\{0,\, k-j \}$, and the constant $C$ depends only on $m$.
Moreover, there exists another family of smoothing operators (still denoted by $\mathcal{S}_{\theta}$) acting on the functions defined on $\varSigma_T$ and satisfying the properties in \eqref{smooth.p1} with norms $\|\cdot\|_{H^{j}(\varSigma_T)}$.
\end{proposition}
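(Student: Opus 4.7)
The result is a standard construction of Friedrichs-type mollifiers adapted to the anisotropic Sobolev scale $H_*^m$, originally carried out in \cite{T09ARMAMR2481071}. My plan is to split the argument into an extension step, a rectification step, and a mollification step, with an easy analogue for the boundary version.

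First, I would construct a linear extension operator $E\colon \mathscr{F}_*^s(\varOmega_T) \to H_*^s(\mathbb{R} \times \mathbb{R}^3)$ that is continuous for every integer $s \leq m$ and preserves the vanishing for $t < 0$. The extension across the time boundary $\{t = T\}$ is a standard Seeley-type reflection in $t$, which commutes with the $\mathrm{D}_*^\alpha$ derivatives. The extension across $\{x_1 = 0\}$ is the delicate point: one uses a finite linear combination of dilated reflections $(Eu)(x_1,\cdot) = \sum_{k=1}^{N} c_k u(-\lambda_k x_1,\cdot)$ for $x_1 < 0$, with scales $\lambda_k > 0$ and weights $c_k$ selected so that both the regular normal derivatives $\p_1^{\alpha_4}$ and the conormal derivatives $(\sigma \p_1)^{\alpha_1}$ agree on the two sides up to order $m$.

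Second, I would rectify the conormal structure on the whole space via a smooth diffeomorphism $x_1 = \phi(y_1)$ chosen so that $\sigma(\phi(y_1))/\phi'(y_1)$ is a smooth, strictly positive, bounded function of $y_1$. In the rectified coordinates the conormal field $\sigma \p_1$ becomes an ordinary derivative $\p_{y_1}$ (up to a smooth bounded multiplier), while the regular $\p_1$ picks up the degenerate weight $\phi'(y_1)^{-1}$, which is precisely the reason $\p_1$ must count twice in $\langle\alpha\rangle$ while $\sigma\p_1$ counts once. The norm $\|\cdot\|_{H_*^k}$ then becomes equivalent to a weighted standard Sobolev norm in $(t,y_1,x_2,x_3)$ in which all four derivatives carry weight one.

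Finally, I would set $\mathcal{S}_\theta u := \rho_\theta *_{(t,y_1,x_2,x_3)} Eu$, where $\rho_\theta$ is a standard Friedrichs mollifier at scale $\theta^{-1}$ in the four rectified variables, and then restrict back to $\varOmega_T$. The three estimates \eqref{smooth.p1a}--\eqref{smooth.p1c} reduce to classical convolution identities in the rectified picture: \eqref{smooth.p1a} follows from $\|\rho_\theta\|_{L^1}=1$ together with the Bernstein-type bound $\|\p^{k-j}\rho_\theta\|_{L^1}\lesssim \theta^{k-j}$; \eqref{smooth.p1b} follows by writing $\mathcal{S}_\theta u - u = -\int_\theta^\infty \tfrac{d}{ds}\mathcal{S}_s u\, \mathrm{d}s$ and combining with \eqref{smooth.p1c}; and \eqref{smooth.p1c} follows from differentiating the kernel in $\theta$. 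For the boundary version on $\varSigma_T$ there is no normal direction, so a standard Seeley extension in $t$ followed by an ordinary Friedrichs mollifier in $(t,x_2,x_3)$ does the job. The main obstacle is the extension step: choosing $(c_k,\lambda_k)$ so as to preserve both the regular and the conormal derivatives up to order $m$ simultaneously leads to a Vandermonde-type system that is more constrained than in the isotropic setting, and one must verify that the resulting extension genuinely lies in $H_*^m(\mathbb{R}^4)$ rather than merely in the isotropic $H^{\lfloor m/2\rfloor}(\mathbb{R}^4)$.
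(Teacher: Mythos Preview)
The paper does not prove this proposition; it simply quotes it verbatim from \cite[Proposition~10]{T09ARMAMR2481071}. So there is no ``paper's own proof'' to compare against, and your sketch is essentially a summary of the construction carried out in that reference.

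That said, one technical point in your outline is not quite right and would need repair. You mollify in all four rectified variables, including $t$, with a standard Friedrichs kernel. A centered convolution in $t$ at scale $\theta^{-1}$ does not preserve the constraint $u=0$ for $t<0$: support leaks into the strip $-\theta^{-1}<t<0$, so $\mathcal{S}_\theta u$ would fail to lie in $\mathscr{F}_*^s(\varOmega_T)$. The usual fix (and the one in \cite{T09ARMAMR2481071}) is to use a one-sided, causal kernel in the time variable, supported in $\{t\geq 0\}$; this preserves the past-vanishing and still gives the three estimates. A second imprecision: your rectification $x_1=\phi(y_1)$ cannot be a diffeomorphism of $\mathbb{R}$ onto $\mathbb{R}$ with $\sigma(\phi)/\phi'$ strictly positive, since $\sigma(0)=0$. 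The correct picture is a diffeomorphism from $\mathbb{R}$ onto $(0,\infty)$ (for instance $\phi(y_1)=e^{y_1}$ near the boundary, matched smoothly to the identity for large $x_1$), which maps the half-space to a full space and turns $\sigma\partial_1$ into an honest $\partial_{y_1}$; in particular no Seeley extension across $\{x_1=0\}$ is needed, and your worry about the constrained Vandermonde system disappears. With these two adjustments your plan matches the cited construction.
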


Let us follow \cite{CS08MR2423311,T09ARMAMR2481071,TW21MR4201624} to describe the iteration scheme for \eqref{P.new}.

 \vspace*{1mm}
\noindent{\bf Assumption\;(A-1)}: {\it  Set $ (V_0, \psi_0)=0$. Let
 $(V_k,\psi_k)$ be given and vanish in the past, and set $\varPsi_k:=\chi( x_1)\psi_k$,
 for $k=0,\ldots,{n}$.}

\vspace*{1mm}
We consider
\begin{align}\label{NM0}
V_{{n}+1}=V_{{n}}+\delta V_{{n}},
\quad \psi_{{n}+1}=\psi_{{n}}+\delta \psi_{{n}},
\quad  \delta\varPsi_{{n}}:=\chi(x_1)\delta \psi_{{n}}.
\end{align}
The differences $\delta V_{{n}}$ and $\delta \psi_{{n}}$ will be specified via
\begin{align} \label{effective.NM}
\left\{\begin{aligned}
&\mathbb{L}_e'(U^a+V_{{n}+1/2},\varPhi^a+\varPsi_{{n}+1/2})\delta \dot{V}_{{n}}=f_{{n}}
\ \  &&\textrm{in }\varOmega_T,\\
& \mathbb{B}_e'(U^a+V_{{n}+1/2},\varphi^a+\psi_{{n}+1/2})(\delta \dot{V}_{{n}},\delta\psi_{{n}})=g_{{n}}
\ \ &&\textrm{on }\varSigma_T,\\
& (\delta \dot{V}_{{n}},\delta\psi_{{n}})=0\ \ &&\textrm{for }t<0,
\end{aligned}\right.
\end{align}
where $\varPsi_{{n}+1/2}:=\chi(x_1)\psi_{{n}+1/2}$,
the good unknown $\delta \dot{V}_{{n}}$ is defined by ({\it cf.}~\eqref{good})
\begin{align} \label{good.NM}
\delta \dot{V}_{{n}}:=\delta V_{{n}}-\frac{\p_1 (U^a+V_{{n}+1/2})}{\p_1 (\varPhi^a+\varPsi_{{n}+1/2})}\delta\varPsi_{{n}},
\end{align}
and $(V_{{n}+1/2},\psi_{{n}+1/2})$ is a smooth modified state to be defined in Proposition \ref{pro:modified} such that $(U^a+V_{{n}+1/2},\varphi^a+\psi_{{n}+1/2})$ satisfies \eqref{bas1a}--\eqref{bas1c}.
The source terms $f_n$ and $g_n$ will be chosen through the accumulated error terms at Step ${n}$ later on.

\vspace*{1mm}
\noindent{\bf Assumption (A-2)}: {\it Set $f_0:=\mathcal{S}_{\theta_0}f^a$ and $(e_0,\tilde{e}_0,g_0):=0$ for $\theta_0\geq 1$ sufficiently large, and let $(f_k,g_k,e_k,\tilde{e}_k)$ be given and vanish in the past for $k=1,\ldots,{n}-1$.}

\vspace*{1mm}

With Assumptions (A-1)--(A-2) in hand,
we calculate the accumulated error terms at Step $n$ for $n\geq 1$ by
\begin{align}  \label{E.E.tilde}
E_{{n}}:=\sum_{k=0}^{{n}-1}e_{k},\quad \widetilde{E}_{{n}}:=\sum_{k=0}^{{n}-1}\tilde{e}_{k}.
\end{align}
Then we compute $f_{{n}}$ and $g_{{n}}$ from
\begin{align} \label{source}
\sum_{k=0}^{{n}} f_k+\mathcal{S}_{\theta_{{n}}}E_{{n}}=\mathcal{S}_{\theta_{{n}}}f^a,
\quad
\sum_{k=0}^{{n}}g_k+\mathcal{S}_{\theta_{{n}}}\widetilde{E}_{{n}}=0,
\end{align}
where $\mathcal{S}_{\theta_{{n}}}$ are the smoothing operators defined in Proposition \ref{pro:smooth} with $\theta_0\geq 1$ and $\theta_{{n}}:=\sqrt{\theta^2_0+{n}}$.
Once $f_n$ and $g_n$ are chosen, we can use Theorem \ref{thm:linear} to obtain $(\delta \dot{V}_{{n}},\delta \psi_{{n}})$ from \eqref{effective.NM}.
Then we get $\delta V_{{n}}$ and $(V_{{n}+1},\psi_{{n}+1})$ from \eqref{good.NM} and \eqref{NM0} respectively.

The error terms at Step ${n}$ are defined as follows:
\begin{align}
\nonumber&\mathcal{L}(V_{{n}+1},\varPsi_{{n}+1})-\mathcal{L}(V_{{n}},\varPsi_{{n}})\\
\nonumber&\quad = \mathbb{L}'(U^a+V_{{n}},\varPhi^a+\varPsi_{{n}})(\delta V_{{n}},\delta\varPsi_{{n}})+e_{{n}}'\\
\nonumber&\quad = \mathbb{L}'(U^a+\mathcal{S}_{\theta_{{n}}}V_{{n}},\varPhi^a+\mathcal{S}_{\theta_{{n}}}\varPsi_{{n}})(\delta V_{{n}},\delta\varPsi_{{n}})+e_{{n}}'+e_{{n}}''\\
\nonumber&\quad = \mathbb{L}'(U^a+V_{{n}+1/2},\varPhi^a+\varPsi_{{n}+1/2})(\delta V_{{n}},\delta\varPsi_{{n}})+e_{{n}}'+e_{{n}}''+e_{{n}}'''\\
\label{decom1}&\quad = \mathbb{L}_e'(U^a+V_{{n}+1/2},\varPhi^a+\varPsi_{{n}+1/2})\delta \dot{V}_{{n}}+e_{{n}}'+e_{{n}}''+e_{{n}}'''+D_{{n}+1/2} \delta\varPsi_{{n}},
\\
\nonumber&\mathcal{B}(V_{{n}+1},\psi_{{n}+1})-\mathcal{B}(V_{{n}},\psi_{{n}})\\
\nonumber&\quad = \mathbb{B}'(U^a+V_{{n}},\varphi^a+\psi_{{n}})(\delta V_{{n}},\delta\psi_{{n}})+\tilde{e}_{{n}}'\\
\nonumber&\quad = \mathbb{B}'(U^a+\mathcal{S}_{\theta_{{n}}}V_{{n}},\varphi^a+\mathcal{S}_{\theta_{{n}}}\psi_{{n}})(\delta V_{{n}},\delta\psi_{{n}})+\tilde{e}_{{n}}'+\tilde{e}_{{n}}''\\
\label{decom2}&\quad =\mathbb{B}_e'(U^a+V_{{n}+1/2},
\varphi^a+\psi_{{n}+1/2})(\delta \dot{V}_{{n}} ,\delta\psi_{{n}})+\tilde{e}_{{n}}'+\tilde{e}_{{n}}''+\tilde{e}_{{n}}''',
\end{align}
where
\begin{align}\label{error.D}
D_{{n}+1/2}:=\frac{1}{\p_1(\varPhi^a+\varPsi_{{n}+1/2})}\p_1\mathbb{L}(U^a+V_{{n}+1/2},\varPhi^a+\varPsi_{{n}+1/2})
\end{align}
is the remaining error term
and we have used \eqref{Alinhac} to derive the last identity in \eqref{decom1}.
Setting
\begin{align} \label{e.e.tilde}
e_{{n}}:=e_{{n}}'+e_{{n}}''+e_{{n}}'''+D_{{n}+1/2} \delta\varPsi_{{n}},\quad
\tilde{e}_{{n}}:=\tilde{e}_{{n}}'+\tilde{e}_{{n}}''+\tilde{e}_{{n}}''',
\end{align}
completes the description of the iterative scheme.

Let $m\geq {13}$ be an integer and let $\widetilde{\alpha}:=m-{5}$.
Assume that the initial data $(U_0,\varphi_0)$ satisfy $\widetilde{U}_0:=U_0-\widebar{U}\in H^{m+3/2}(\varOmega)$ and $\varphi_0\in   H^{m+2}(\mathbb{R}^{2})$.
By virtue of Lemma \ref{lem:app}, we have
\begin{align} \label{small}
\big\|\widetilde{U}^a\big\|_{H^{\widetilde{\alpha}+{6}}(\varOmega_T)}
+\big\|\varphi^a\big\|_{H^{\widetilde{\alpha}+{15/2}}(\varSigma_T)}
\leq C(M_0),\
\big\|f^a\big\|_{H^{\widetilde{\alpha}+{5}}(\varOmega_T)}\leq \delta_0(T),
\end{align}
where $M_0$ is defined by \eqref{M0} and $\delta_0(T)\to 0$ as $T\to 0$.
Suppose further that Assumptions (A-1)--(A-2) are satisfied.
Our inductive hypothesis reads
\begin{align*}
(\mathbf{H}_{{n}-1})\ \left\{\begin{aligned}
\textrm{(a)}\,\,  &\|(\delta V_k,\delta \varPsi_k)\|_{s,*,T}+\|(\delta\psi_k,\mathrm{D}_{x'}\delta\psi_k)\|_{H^{s}(\varSigma_T)}\leq \boldsymbol{\epsilon} \theta_k^{s-{\alpha }-1}\varDelta_k\\
&\quad \textrm{for all } k=0,\ldots,{n}-1\textrm{ and }s=6,\ldots,\widetilde{\alpha} ;\\
\textrm{(b)}\,\, &\|\mathcal{L}( V_k,  \varPsi_k)-f^a\|_{s,*,T}\leq 2 \boldsymbol{\epsilon} \theta_k^{s-{\alpha }-1}\\
&\quad \textrm{for all } k=0,\ldots,{n}-1\textrm{ and } s= 6,\ldots,\widetilde{\alpha}-2;\\
\textrm{(c)}\,\,  &\|\mathcal{B}( V_k,  \psi_k)\|_{H^{s}(\varSigma_T)}\leq  \boldsymbol{\epsilon} \theta_k^{s-{\alpha }-1}\\
&\quad \textrm{for all } k=0,\ldots,{n}-1\textrm{ and } s=7,\ldots,{\alpha },
\end{aligned}\right.
\end{align*}
for integer $ {\alpha }\geq 7$, constant $\boldsymbol{\epsilon}>0$, and $\varDelta_{k}:=\theta_{k+1}-\theta_k$.
We are going to show that hypothesis ($\mathbf{H}_{{n}-1}$) implies ($\mathbf{H}_{{n}}$) and that ($\mathbf{H}_0$) holds,
provided $T>0$ and $\boldsymbol{\epsilon}>0$ are small enough and $\theta_0\geq 1$ is suitably large.

We first let hypothesis ($\mathbf{H}_{{n}-1}$) hold. Then we get the following lemma.

\begin{lemma}[{\cite[Lemma 7]{T09ARMAMR2481071}}] \label{lem:triangle}
 If $\theta_0$ is sufficiently large, then
 \begin{align}
&\|( V_k, \varPsi_k)\|_{s,*,T}+\|\psi_k\|_{H^{s}(\varSigma_T)}
\leq
\left\{\begin{aligned}
&\boldsymbol{\epsilon} \theta_k^{(s-{\alpha })_+}   &&\textrm{if }s\neq {\alpha },\\
&\boldsymbol{\epsilon} \log \theta_k   &&\textrm{if }s= {\alpha },
\end{aligned}\right.  \label{tri1}\\
&\| ({I}-\mathcal{S}_{\theta_k})(V_k,   \varPsi_k)\|_{s,*,T}
+\|({I}-\mathcal{S}_{\theta_k})\psi_k\|_{H^{s}(\varSigma_T)}
\leq C\boldsymbol{\epsilon} \theta_k^{s-{\alpha }},\label{tri2}
\end{align}
 for all $k=0,\ldots,{n}-1$ and  $s=6,\ldots,\widetilde{\alpha}$.
Moreover,
\begin{align}
&\|( \mathcal{S}_{\theta_k}V_k, \mathcal{S}_{\theta_k}\varPsi_k)\|_{s,*,T}
+\|\mathcal{S}_{\theta_k}\psi_k\|_{H^{s}(\varSigma_T)}\leq
\left\{\begin{aligned}
&C\boldsymbol{\epsilon} \theta_k^{(s-{\alpha })_+}  &&\textrm{if }s\neq {\alpha },\\
&C\boldsymbol{\epsilon} \log \theta_k  &&\textrm{if }s= {\alpha },
\end{aligned}\right.   \label{tri3}
 \end{align}
 for all $k=0,\ldots,{n}-1$ and $s=6,\ldots,\widetilde{\alpha}+6$.
\end{lemma}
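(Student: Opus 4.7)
\medskip

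The plan is to imitate the standard Nash--Moser bookkeeping argument from \cite[Lemma 7]{T09ARMAMR2481071}, with the only structural input being the telescoping identities
\begin{align*}
V_k=\sum_{j=0}^{k-1}\delta V_j,\quad
\varPsi_k=\sum_{j=0}^{k-1}\delta \varPsi_j,\quad
\psi_k=\sum_{j=0}^{k-1}\delta \psi_j,
\end{align*}
the bound $\|\delta V_j\|_{s,*,T}+\|\delta\varPsi_j\|_{s,*,T}+\|(\delta\psi_j,\mathrm{D}_{x'}\delta\psi_j)\|_{H^s(\varSigma_T)}\le \boldsymbol{\epsilon}\,\theta_j^{s-{\alpha}-1}\varDelta_j$ supplied by $(\mathbf H_{n-1})$, and the smoothing estimates \eqref{smooth.p1a}--\eqref{smooth.p1b}. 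Since Proposition \ref{pro:smooth} furnishes a companion family of smoothing operators on $\varSigma_T$ with estimates of the same shape, the interior and boundary summands can be handled in parallel, and below I write only the interior argument.

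For part (a), I would insert the inductive bound into the telescoping identity and compare the resulting discrete sum to an integral:
\begin{align*}
\|V_k\|_{s,*,T}\le \sum_{j=0}^{k-1}\|\delta V_j\|_{s,*,T}
\le \boldsymbol{\epsilon}\sum_{j=0}^{k-1}\theta_j^{s-\alpha-1}\varDelta_j
\lesssim \boldsymbol{\epsilon}\int_{\theta_0}^{\theta_k}\theta^{s-\alpha-1}\mathrm d\theta,
\end{align*}
where I have used $\varDelta_j=\theta_{j+1}-\theta_j\sim \theta_j^{-1}$. The three cases $s>{\alpha}$, $s={\alpha}$, $s<{\alpha}$ then produce $\theta_k^{s-{\alpha}}/(s-{\alpha})$, $\log\theta_k$, and a bounded constant absorbable into $\boldsymbol{\epsilon}\log\theta_k$ respectively, giving \eqref{tri1}.

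For part (b) I would write $(I-\mathcal S_{\theta_k})V_k=\sum_{j=0}^{k-1}(I-\mathcal S_{\theta_k})\delta V_j$ and apply \eqref{smooth.p1b} with top index $\widetilde{\alpha}$ (which is permissible because $s\le\widetilde{\alpha}$), obtaining
\begin{align*}
\|(I-\mathcal S_{\theta_k})\delta V_j\|_{s,*,T}
\le C\,\theta_k^{s-\widetilde{\alpha}}\|\delta V_j\|_{\widetilde{\alpha},*,T}
\le C\boldsymbol{\epsilon}\,\theta_k^{s-\widetilde{\alpha}}\theta_j^{\widetilde{\alpha}-\alpha-1}\varDelta_j.
\end{align*}
Summing over $j$ and again comparing with an integral (using $\widetilde{\alpha}>{\alpha}$, which holds in the regime $m\ge 13$, $\alpha=\widetilde{\alpha}-2$ typically chosen in the iteration) yields $\sum_j\theta_j^{\widetilde{\alpha}-\alpha-1}\varDelta_j\lesssim \theta_k^{\widetilde{\alpha}-\alpha}$, hence \eqref{tri2}.

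For part (c) I would split the range $6\le s\le\widetilde{\alpha}+6$ into two pieces. When $s\le\widetilde{\alpha}$, \eqref{smooth.p1a} with $j=s$ gives $\|\mathcal S_{\theta_k}V_k\|_{s,*,T}\le C\|V_k\|_{s,*,T}$, and the conclusion follows from part (a). When $\widetilde{\alpha}<s\le\widetilde{\alpha}+6$, I would again telescope and use \eqref{smooth.p1a} with $j=\widetilde{\alpha}$ to get $\|\mathcal S_{\theta_k}\delta V_j\|_{s,*,T}\le C\,\theta_k^{s-\widetilde{\alpha}}\|\delta V_j\|_{\widetilde{\alpha},*,T}$, and then sum exactly as in part (b). There is no genuine obstacle in the argument itself; the only point requiring some care is the bookkeeping needed to ensure that the gap $\widetilde{\alpha}-\alpha$ is strictly positive so that the integral comparison is gainful, and that the cases $s<\alpha$, $s=\alpha$, $s>\alpha$ in the summation of (a) each feed consistently into the subsequent estimates (b) and (c); this is guaranteed by the choice $\widetilde{\alpha}=m-5$ together with $m\ge 13$ and $\alpha\ge 7$ selected in \S \ref{sec:proof2}.
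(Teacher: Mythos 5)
Your proposal is correct and is exactly the standard telescoping/Riemann-sum argument behind \cite[Lemma 7]{T09ARMAMR2481071}, which the paper cites without reproving; in particular, the comparison $\sum_j\theta_j^{\beta}\varDelta_j\lesssim\int\theta^{\beta}\,\mathrm{d}\theta$ together with the smoothing estimates \eqref{smooth.p1a}--\eqref{smooth.p1b} and a large choice of $\theta_0$ (to absorb constants in \eqref{tri1}) is precisely what is needed. One minor bookkeeping slip: in \S\ref{sec:proof3}--\S\ref{sec:proof4} the paper fixes $\widetilde{\alpha}=\alpha+3$, not $\alpha=\widetilde{\alpha}-2$, though this does not affect your argument since the only property you actually invoke is $\widetilde{\alpha}>\alpha$.
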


\subsection{Error Estimates} \label{sec:proof3}

This subsection is devoted to the estimate of
the quadratic error terms $e'_{k}$ and $\tilde{e}_{k}'$,
the first substitution error terms $e_{k}''$ and $\tilde{e}_{k}''$,
the second substitution error terms $e_{k}''' $  and  $\tilde{e}_{k}'''$,
and the last error term $D_{k+1/2} \delta\varPsi_{k}$
({\it cf.}~\eqref{decom1}--\eqref{error.D}).
First we find
\begin{align*}
e_k' =\;&\int_{0}^{1}
\mathbb{L}''\big(U^a+V_{k}+\tau \delta  V_{k},
\varPhi^a+\varPsi_{k} 
+\tau \delta \varPsi_{k}\big)\big((\delta V_{k},\delta\varPsi_{k}),(\delta V_{k},\delta\varPsi_{k})\big)
(1-\tau)\d\tau,\\
\tilde{e}_k'
=\;&
\int_{0}^{1}
\mathbb{B}''\big(U^a+V_{k}+\tau \delta  V_{k},
\varphi^a+\psi_{k}  
+\tau \delta \psi_{k}\big)\big((\delta V_{k},\delta \psi_{k}),(\delta V_{k},\delta \psi_{k})\big)
(1-\tau)\d\tau,
\end{align*}
where  $\mathbb{L}''$ and $\mathbb{B}''$ are the second derivatives of the operators
$\mathbb{L}$ and $\mathbb{B}$, respectively, that is,
\begin{align*}
\mathbb{L}''\big(\mathring{U},\mathring{\varPhi}\big)
\big((V,\varPsi),(\widetilde{V},\widetilde{\varPsi})\big)
&:=\left.\frac{\d}{\d \theta}
\mathbb{L}'\big(\mathring{U}+\theta \widetilde{V},
\mathring{\varPhi}+\theta \widetilde{\varPsi}\big)
\big(V,\varPsi\big)\right|_{\theta=0},\\
\mathbb{B}''\big(\mathring{U},\mathring{\varphi}\big)
\big((V,\psi),(\widetilde{V},\tilde{\psi})\big)
&:=\left.\frac{\d}{\d \theta}
\mathbb{B}'(\mathring{U}+\theta \widetilde{V},
\mathring{\varphi}+\theta \tilde{\psi}) (V,\psi)\right|_{\theta=0}.
\end{align*}
For our problem \eqref{NP1}, from \eqref{B'.bb:def}, applying a direct calculation yields
\begin{align}
\nonumber
&\mathbb{B}''\big(\mathring{U},\mathring{\varphi}\big)
\big((V,\psi),(\widetilde{V},\tilde{\psi})\big)
\\ & \quad =
\begin{pmatrix}
(\tilde{v}_2   \p_2+\tilde{v}_3   \p_3 )\psi + (v_2  \p_2 +v_3  \p_3)\tilde{\psi} \\[1mm]
\mathfrak{s}\mathrm{D}_{x'}\cdot
\left(
\dfrac{\mathring{\zeta}\cdot\tilde{\zeta}}{|\mathring{N}|^3}\zeta
-\dfrac{\tilde{\zeta}\cdot {\zeta}}{|\mathring{N}|^3}\mathring{\zeta}
-\dfrac{\mathring{\zeta}\cdot {\zeta}}{|\mathring{N}|^3}\tilde{\zeta}
+\dfrac{3(\mathring{\zeta}\cdot {\zeta})(\mathring{\zeta}\cdot\tilde{\zeta})}{|\mathring{N}|^5}\mathring{\zeta}
  \right)
\end{pmatrix},
\label{B''.form}
\end{align}
where $\zeta:=\mathrm{D}_{x'}\psi$, $\mathring{\zeta}:=\mathrm{D}_{x'}\mathring{\varphi}$, $\tilde{\zeta}:=\mathrm{D}_{x'}\tilde{\psi}$, and $\mathring{N}:=(1,-\p_2\mathring{\varphi},-\p_3\mathring{\varphi})^{\mathsf{T}}$.
Using the Moser-type calculus and embedding inequalities, and omitting detailed calculations, we obtain the following estimates for the operators $\mathbb{L}''$ and $\mathbb{B}''$.

\begin{proposition}  \label{pro:tame2}
Let $T>0$ be a real number and $s\geq 6$ be an integer.
Suppose that $(\widetilde{V},\widetilde\varPsi)\in H_*^{s+2}(\varOmega_T)$ and $\tilde{\varphi}\in H^{s+2}(\varSigma_T)$ satisfy
$$\|(\widetilde{V},\widetilde{\varPsi} )\|_{W_*^{2,\infty}(\varOmega_T)} +\|\tilde{\varphi}\|_{W^{1,\infty}(\varSigma_T)}\leq \widetilde{K}$$
for some constant $\widetilde{K}>0$.
Then there is a constant $C(\widetilde{K})>0$, such that,
if $(V_i,\varPsi_i)\in H_*^{s+2}(\varOmega_T)$ and $(W_i,\psi_i)\in H^{s}(\varSigma_T)\times H^{s+2}(\varSigma_T)$ for $i=1,2$, then
\begin{align}
\big\|\mathbb{L}''\big(\widebar{U}+\widetilde{V},x_1+\widetilde{\varPsi} \big)&\big((V_1,\varPsi_1),(V_2,\varPsi_2) \big)\big\|_{s,*,T} \nonumber \\[2mm]
\leq C(\widetilde{K}) \sum_{i\neq j}\Big\{&
\big\|(V_i,\varPsi_i)\big\|_{6,*,T}\big\|(V_j,\varPsi_j)\big\|_{s+2,*,T} \nonumber \\
&
+ \big\|(V_1,\varPsi_1)\big\|_{6,*,T} \big\|(V_2,\varPsi_2)\big\|_{6,*,T}   \big\|\big({\widetilde{V}},\widetilde{\varPsi}\big)\big\|_{s+2,*,T}
\Big\}, \nonumber 
\end{align}
and
\begin{align}
\big\|\mathbb{B}''\big(\widebar{U}+\widetilde{V},\widetilde{\varphi} \big)&
\big((W_1,\psi_1),(W_2,\psi_2) \big)\big\|_{H^{s}(\varSigma_T)} \nonumber\\[1mm]
\leq C(\widetilde{K}) \sum_{i\neq j} \Big\{&
\big\|\psi_i\big\|_{H^{3}(\varSigma_T)}\big\|\psi_j\big\|_{H^{s+2}(\varSigma_T)}
+\big\|\psi_1\big\|_{H^{3}(\varSigma_T)}\big\|\psi_2\big\|_{H^{3}(\varSigma_T)}\big\|\tilde{\varphi}\big\|_{H^{s+2}(\varSigma_T)}
\nonumber\\
&
+\big\|W_i\big\|_{H^{s}(\varSigma_T)}\big\|\psi_j\big\|_{H^{3}(\varSigma_T)}
+ \big\|W_i\big\|_{H^{2}(\varSigma_T)} \big\|\psi_j\big\|_{H^{s+1}(\varSigma_T)}
\Big\}.
\nonumber
\end{align}
\end{proposition}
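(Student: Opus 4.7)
The plan is to explicitly compute $\mathbb{L}''$ and $\mathbb{B}''$ and then invoke the Moser-type product and composition inequalities in the anisotropic spaces $H_*^s$ and the standard Sobolev spaces $H^s$, together with the embeddings $H_*^6(\varOmega_T)\hookrightarrow W_*^{2,\infty}(\varOmega_T)$ and $H^3(\varSigma_T)\hookrightarrow W^{1,\infty}(\varSigma_T)$ used throughout \cite{TW21MR4201624}.

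First, I will differentiate $\mathbb{L}(U,\varPhi)=L(U,\varPhi)U$ twice. Since $L$ depends on $U$ through the matrices $A_0,\dots,A_3$ and on $\varPhi$ only through $\widetilde A_1$ in a rational way (with denominator $\p_1\varPhi$), a direct computation gives a finite sum of terms of the schematic form
\begin{equation*}
\mathbb{L}''((V_1,\varPsi_1),(V_2,\varPsi_2)) \;=\; \sum \mathbf{c}(\widetilde V,\mathrm{D}\widetilde\varPsi)\,\mathbf{p}(V_1,\varPsi_1)\,\mathbf{q}(V_2,\varPsi_2)\,\mathrm{D}(\widetilde V,\varPsi_j\text{ or }V_j),
\end{equation*}
where $\mathbf c$ is smooth in its arguments (with possibly $1/\p_1\varPhi$ factors which are under control thanks to $\|\widetilde\varPsi\|_{W^{2,\infty}_*}\leq \widetilde K$), and $\mathbf p,\mathbf q$ are polynomials of degree one in $(V_i,\varPsi_i,\mathrm{D}(V_i,\varPsi_i))$ that are symmetric in the two arguments. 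The point is that at most one factor in each term carries a derivative of $\widetilde V$ or of $(V_1,V_2,\varPsi_1,\varPsi_2)$ of the highest order, while the remaining two factors live in $H_*^6\hookrightarrow W_*^{2,\infty}$.

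Second, I will then bound each such term by the anisotropic Moser inequality (see \cite[Lemma 3.4]{TW21MR4201624}, or \cite{S96MR1405665}) applied to a product of three functions in $H_*^s$, of the form
\begin{equation*}
\|uvw\|_{s,*,T}\lesssim \|u\|_{s,*,T}\|v\|_{W^{1,\infty}_*}\|w\|_{W^{1,\infty}_*}+\|u\|_{W^{1,\infty}_*}\|v\|_{s,*,T}\|w\|_{W^{1,\infty}_*}+\cdots,
\end{equation*}
combined with the composition estimate for $\mathbf{c}$ in terms of $(\widetilde V,\widetilde\varPsi)$. Distributing the top derivative on the three possible factors yields, after placing $(V_i,\varPsi_i)$ at order $6$ and one of $(V_j,\varPsi_j)$ or $(\widetilde V,\widetilde\varPsi)$ at order $s+2$, exactly the two types of bounds claimed in the statement (the shift $s\mapsto s+2$ comes from the derivative falling inside $\mathbb L$).

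For $\mathbb B''$, I will start from the explicit formula \eqref{B''.form}. The first component is bilinear of order one in $(\psi,\tilde\psi,v,\tilde v)$, so that the boundary Moser inequality gives immediately the mixed terms involving $W_i$. For the second component I denote $F(\mathring\zeta):=(\cdot)/|\mathring N|^3$ type rational functions; each term in \eqref{B''.form} is of the form $\mathrm{D}_{x'}\cdot(F_2(\mathring\zeta)\,\zeta_1\zeta_2)$ with $F_2$ smooth in $\mathring\zeta$ on the bounded set $|\mathring\zeta|\le\widetilde K$. Applying the Moser product/composition inequalities in $H^s(\varSigma_T)$ and the embedding $H^3(\varSigma_T)\hookrightarrow L^\infty(\varSigma_T)$, I distribute derivatives just as above to obtain the advertised bound.

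The main obstacle will not be any single estimate but the bookkeeping: the formulas for $\mathbb L''$ and $\mathbb B''$ contain many terms with different structure (in particular, normal derivatives of $\widetilde V$ appearing through $\widetilde A_1$), and one has to make sure that the highest-order derivative in each term always falls on a quantity controlled by $\|\cdot\|_{s+2,*,T}$ or $\|\cdot\|_{H^{s+2}(\varSigma_T)}$, while the other factors are absorbed through the uniform bound $\widetilde K$ via $H_*^6\hookrightarrow W_*^{2,\infty}$ and $H^3(\varSigma)\hookrightarrow W^{1,\infty}(\varSigma)$. Once this is arranged term by term, the conclusion follows by summation.
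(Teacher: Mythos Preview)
Your approach is correct and coincides with the paper's: the authors state that the proposition follows by ``using the Moser-type calculus and embedding inequalities, and omitting detailed calculations,'' which is precisely what you carry out by computing $\mathbb{L}''$ and $\mathbb{B}''$ explicitly (via \eqref{B''.form} for the latter) and then applying the anisotropic/standard Moser product and composition estimates together with the embeddings $H_*^6(\varOmega_T)\hookrightarrow W_*^{2,\infty}(\varOmega_T)$ and $H^3(\varSigma_T)\hookrightarrow W^{1,\infty}(\varSigma_T)$.
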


We first estimate the quadratic error terms $e'_{k}$ and $\tilde{e}_{k}'$.

\begin{lemma}\label{lem:quad}
 Let ${\alpha }\geq 7$.
 If $\boldsymbol{\epsilon}>0$ is small enough and $\theta_0\geq 1$ is sufficiently large, then
 \begin{align}\label{quad.est}
 \|e_k'\|_{s,*,T}+\|\tilde{e}_k'\|_{H^{s}(\varSigma_T)}
 \lesssim \boldsymbol{\epsilon}^2 \theta_k^{\varsigma_1(s)-1}\varDelta_k,
 \end{align}
 for $k=0,\ldots,{n}-1$ and $s=6,\ldots,\widetilde{\alpha}-2$,
 where  $$\varsigma_1(s):=\max\{s+6-2{\alpha },(s+2-{\alpha })_++10-2{\alpha } \}.$$
\end{lemma}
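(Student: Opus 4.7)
The plan is to apply the tame bilinear estimates of Proposition \ref{pro:tame2} for $\mathbb{L}''$ and $\mathbb{B}''$ to the integrands defining $e_k'$ and $\tilde{e}_k'$, and then to insert the quantitative information coming from the inductive hypothesis $(\mathbf{H}_{n-1})$(a), Lemma \ref{lem:triangle}, and the approximate solution bounds \eqref{small}. The algebraic heart of the argument is the identity $\varDelta_k\sim \theta_k^{-1}$, which lets us trade one power of $\varDelta_k$ for one power of $\theta_k^{-1}$ and so collapse the expected quadratic bound $\varDelta_k^2$ into the required linear factor $\varDelta_k$.

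First I would check the uniform $W_*^{2,\infty}$-smallness assumption needed to apply Proposition \ref{pro:tame2} with a constant $C(\widetilde{K})$ that is independent of $k$. Writing the argument of $\mathbb{L}''$ as $\widebar{U}+(\widetilde{U}^a+V_k+\tau\,\delta V_k)$ and $x_1+(\varPsi^a+\varPsi_k+\tau\,\delta\varPsi_k)$, the needed $L^\infty$ control follows by combining \eqref{small}, the Sobolev embedding $H_*^{6}(\varOmega_T)\hookrightarrow W_*^{2,\infty}(\varOmega_T)$, the bound \eqref{tri1} with $s=6$ (giving $\|(V_k,\varPsi_k)\|_{6,*,T}\lesssim\boldsymbol{\epsilon}$ if $\alpha\geq 7$), and an analogous bound on $\delta V_k,\delta\varPsi_k$ from $(\mathbf{H}_{n-1})$(a); smallness of $\boldsymbol{\epsilon}$ keeps these terms uniformly bounded by some $\widetilde{K}=\widetilde{K}(M_0)$.

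Next, with $V_1=V_2=\delta V_k$ and $\varPsi_1=\varPsi_2=\delta\varPsi_k$, Proposition \ref{pro:tame2} yields, for $s=6,\dots,\widetilde\alpha-2$,
\begin{align*}
\|e_k'\|_{s,*,T}\lesssim\;& \|(\delta V_k,\delta\varPsi_k)\|_{6,*,T}\,\|(\delta V_k,\delta\varPsi_k)\|_{s+2,*,T}\\
&+\|(\delta V_k,\delta\varPsi_k)\|_{6,*,T}^{2}\,\bigl\|\bigl(\widetilde U^a+V_k+\tau\delta V_k,\,\varPsi^a+\varPsi_k+\tau\delta\varPsi_k\bigr)\bigr\|_{s+2,*,T}.
\end{align*}
Applying $(\mathbf{H}_{n-1})$(a) at indices $6$ and $s+2$ (both admissible since $s+2\leq\widetilde\alpha$) gives $\|(\delta V_k,\delta\varPsi_k)\|_{6,*,T}\lesssim\boldsymbol{\epsilon}\theta_k^{5-\alpha}\varDelta_k$ and $\|(\delta V_k,\delta\varPsi_k)\|_{s+2,*,T}\lesssim\boldsymbol{\epsilon}\theta_k^{s+1-\alpha}\varDelta_k$, so using $\varDelta_k\lesssim\theta_k^{-1}$ once, the first product is bounded by $\boldsymbol{\epsilon}^{2}\theta_k^{s+6-2\alpha-1}\varDelta_k$. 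For the second product, Lemma \ref{lem:triangle} and \eqref{small} give
\[
\bigl\|(\widetilde U^a+V_k,\,\varPsi^a+\varPsi_k)\bigr\|_{s+2,*,T}\lesssim C(M_0)+\boldsymbol{\epsilon}\theta_k^{(s+2-\alpha)_+},
\]
which, combined with $\|(\delta V_k,\delta\varPsi_k)\|_{6,*,T}^{2}\lesssim\boldsymbol{\epsilon}^{2}\theta_k^{10-2\alpha-1}\varDelta_k$, contributes a term of order $\boldsymbol{\epsilon}^{2}\theta_k^{(s+2-\alpha)_+ +10-2\alpha-1}\varDelta_k$. Taking the maximum of the two exponents produces exactly $\theta_k^{\varsigma_1(s)-1}\varDelta_k$.

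The boundary estimate for $\tilde e_k'$ is parallel: from the explicit form \eqref{B''.form} of $\mathbb{B}''$, Proposition \ref{pro:tame2} bounds $\|\tilde e_k'\|_{H^s(\varSigma_T)}$ in terms of products of $\|\delta\psi_k\|_{H^3}$, $\|\delta\psi_k\|_{H^{s+2}}$, $\|\varphi^a+\psi_k\|_{H^{s+2}}$, and the traces $\|\delta V_k\|_{H^{s}(\varSigma_T)}$, $\|\delta V_k\|_{H^{2}(\varSigma_T)}$. The boundary regularity of $\delta\psi_k$ and $\mathrm{D}_{x'}\delta\psi_k$ is directly provided by $(\mathbf{H}_{n-1})$(a); trace estimates in anisotropic Sobolev spaces $H_*^s(\varOmega_T)\hookrightarrow H^{s-1}(\varSigma_T)$ (at the cost of shifting indices by at most one, absorbed into the choice of $\widetilde\alpha-2$) control the $W$-traces. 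Combining these with $\|\varphi^a+\psi_k\|_{H^{s+2}(\varSigma_T)}\lesssim C(M_0)+\boldsymbol{\epsilon}\theta_k^{(s+2-\alpha)_+}$ and once again exchanging $\varDelta_k^{2}$ for $\theta_k^{-1}\varDelta_k$ yields the same bound $\boldsymbol{\epsilon}^{2}\theta_k^{\varsigma_1(s)-1}\varDelta_k$.

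The main obstacle I anticipate is purely bookkeeping: one must verify that for every bilinear pairing coming out of Proposition \ref{pro:tame2}, the index shifts produced by using norms at level $6$ versus level $s+2$ keep us inside the admissible range $s+2\leq\widetilde\alpha$ and $s+2\leq\widetilde\alpha+6$, and that the worst exponent combination is indeed the one recorded in $\varsigma_1(s)$. Apart from this, the only analytic input beyond what has already been developed is the identity $\varDelta_k\sim\theta_k^{-1}$, which is immediate from $\theta_k=\sqrt{\theta_0^{2}+k}$.
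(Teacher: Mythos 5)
Your proposal follows essentially the same route as the paper: apply the tame bilinear estimates of Proposition~\ref{pro:tame2} to the integral formulas for $e_k'$ and $\tilde e_k'$, feed in $(\mathbf{H}_{n-1})$(a), Lemma~\ref{lem:triangle}, and \eqref{small}, and collapse $\varDelta_k^2$ into $\theta_k^{-1}\varDelta_k$. Your exponent bookkeeping for the interior term (using $\|\delta V_k\|_{6,*,T}\lesssim\boldsymbol{\epsilon}\theta_k^{5-\alpha}\varDelta_k$ and $\|\delta V_k\|_{s+2,*,T}\lesssim\boldsymbol{\epsilon}\theta_k^{s+1-\alpha}\varDelta_k$) is correct and reproduces both branches of $\varsigma_1(s)$.

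There is one small but genuine inaccuracy. You write, citing Lemma~\ref{lem:triangle}, that $\|(\widetilde U^a+V_k,\varPsi^a+\varPsi_k)\|_{s+2,*,T}\lesssim C(M_0)+\boldsymbol{\epsilon}\theta_k^{(s+2-\alpha)_+}$. But \eqref{tri1} gives $\boldsymbol{\epsilon}\log\theta_k$, not $\boldsymbol{\epsilon}\theta_k^0$, exactly when $s+2=\alpha$, and $\log\theta_k$ is unbounded. The paper therefore splits the argument into the cases $s+2\neq\alpha$ and $s+2=\alpha$; in the latter case one checks that $\log\theta_k\cdot\theta_k^{10-2\alpha-1}\lesssim\theta_k^{\varsigma_1(\alpha-2)-1}=\theta_k^{3-\alpha}$, which is where the assumption $\alpha\geq7$ earns its keep (it makes $\theta_k^{\alpha-6}\gtrsim\log\theta_k$). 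Your argument reaches the right conclusion but the bound as written is false at that single index; you would need to insert this case split (or note explicitly that $\log\theta_k$ is absorbed by the slack $\alpha\geq 7$) to make the proof watertight.

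Apart from that, the only difference from the paper is cosmetic: the paper writes out the boundary estimate for $\tilde e_k'$ in detail (making the trace shifts $\|\delta V_k\|_{s+1,*,T}$, $\|\delta V_k\|_{6,*,T}$ explicit) and declares the interior estimate similar, while you detail the interior estimate and sketch the boundary one. Either order is fine, and the content is the same.
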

\begin{proof}
Using the embedding theorem, hypothesis $(\mathbf{H}_{{n}-1})$, and \eqref{small}--\eqref{tri1} yields
\begin{align*}
\|(\widetilde{ U}^a,V_{k},\delta  V_{k},\varPsi^a,\varPsi_{k}, \delta \varPsi_{k})\|_{W_*^{2,\infty}(\varOmega_T)}
+\|(\varphi^a,\psi_{k}, \delta \psi_{k})\|_{W^{1,\infty}(\varSigma_T)}
\lesssim 1.
\end{align*}
So Proposition \ref{pro:tame2} can be applied for estimating $e'_{k}$ and $\tilde{e}_{k}'$.
Precisely, we use the trace theorem, hypothesis $(\mathbf{H}_{{n}-1})$, and \eqref{small} to obtain that
for $s=6,\ldots,\widetilde{\alpha}-2$,
\begin{align*}
\|\tilde{e}_k'\|_{H^s(\varSigma_T)}
\lesssim \;&
\|\delta \psi_{k}\|_{H^6(\varSigma_T)}\|\delta \psi_{k}\|_{H^{s+2}(\varSigma_T)}
+\|\delta \psi_{k}\|_{H^{6}(\varSigma_T)}^2\|(\varphi^a,\psi_{k}, \delta \psi_{k})\|_{H^{s+2}(\varSigma_T)}\\
&+\|\delta V_{k}\|_{s+1,*,T}\|\delta \psi_{k}\|_{H^6(\varSigma_T)}
+\|\delta V_{k}\|_{6,*,T}\|\delta \psi_{k}\|_{H^{s+1}(\varSigma_T)}\\
\lesssim \;&
\boldsymbol{\epsilon}^2\theta_{k}^{s+6-2{\alpha }}\varDelta_k^2
+\boldsymbol{\epsilon}^2\theta_k^{10-2{\alpha }}\varDelta_k^2\big(1+\|\psi_k\|_{H^{s+2}(\varSigma_T)}\big).
\end{align*}

If $s+2\neq {\alpha }$, then
\begin{align*}
\|\tilde{e}_k'\|_{H^s(\varSigma_T)}
\lesssim  \boldsymbol{\epsilon}^2 \varDelta_k^2
\big(\theta_k^{s+6-2{\alpha }} +\theta_k^{(s+2-{\alpha })_++10-2{\alpha }}\big)
\lesssim  \boldsymbol{\epsilon}^2 \theta_k^{\varsigma_1(s)-1 }\varDelta_k,
\end{align*}
due to \eqref{tri1} and $\varDelta_k\lesssim \theta_k^{-1}$.

If $s+2= {\alpha }$, then it follows from \eqref{tri1} and ${\alpha }\geq 7$ that
\begin{align*}
\|\tilde{e}_k'\|_{H^s(\varSigma_T)}
\lesssim  \boldsymbol{\epsilon}^2 \varDelta_k^2\big( \theta_k^{4-{\alpha }}+ \theta_k^{11-2{\alpha }} \big)
\lesssim \boldsymbol{\epsilon}^2 \theta_k^{\varsigma_1({\alpha }-2)-1 }\varDelta_k.
\end{align*}

The estimate of ${e}_k'$ can be obtained similarly, so we omit the details and finish the proof of the lemma.
\end{proof}

The following lemma concerns the estimate of  $e_{k}''$ and $\tilde{e}_{k}''$ defined in \eqref{decom1}--\eqref{decom2}.
\begin{lemma} \label{lem:1st}
Let ${\alpha }\geq 7$.
If $\boldsymbol{\epsilon}>0$ is small enough and $\theta_0\geq 1$ is sufficiently large, then
\begin{align}\label{1st.sub}
\|e_k'' \|_{s,*,T} +\|\tilde{e}_k''\|_{H^{s}(\varSigma_T)}
\lesssim \boldsymbol{\epsilon}^2 \theta_k^{\varsigma_2(s)-1}\varDelta_k,
\end{align}
for $k=0,\ldots,{n}-1$ and $s=6,\ldots,\widetilde{\alpha}-2$,
where
\begin{align}\label{varsigma2.def}
\varsigma_2(s):=\max\{s+8-2{\alpha },(s+2-{\alpha })_++12-2{\alpha } \}.
\end{align}
\end{lemma}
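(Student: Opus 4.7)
The plan is to imitate the proof of Lemma \ref{lem:quad}, with a single substantive modification: one of the two small factors $\delta V_k$ (resp.\ $\delta\varPsi_k$) is replaced by the smoothing defect $R_k^1:=(I-\mathcal{S}_{\theta_k})V_k$ (resp.\ $R_k^2:=(I-\mathcal{S}_{\theta_k})\varPsi_k$). First I would rewrite $e_k''$ via the fundamental theorem of calculus as
\[e_k''=\int_0^1 \mathbb{L}''\big(U^a+\mathcal{S}_{\theta_k}V_k+\tau R_k^1,\,\varPhi^a+\mathcal{S}_{\theta_k}\varPsi_k+\tau R_k^2\big)\big((\delta V_k,\delta\varPsi_k),(R_k^1,R_k^2)\big)\,\d\tau,\]
and analogously for $\tilde e_k''$ using the closed-form expression \eqref{B''.form}. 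The embeddings $H_*^6\hookrightarrow W_*^{2,\infty}$ and $H^6\hookrightarrow W^{1,\infty}$, together with \eqref{tri1}--\eqref{tri3} and \eqref{small}, ensure that the coefficient state remains in a fixed $W_*^{2,\infty}\times W^{1,\infty}$ ball, so that Proposition \ref{pro:tame2} applies uniformly in $\tau\in[0,1]$.

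Next I would apply Proposition \ref{pro:tame2} and substitute the quantitative bounds. The inductive hypothesis $(\mathbf{H}_{n-1})$(a) gives $\|(\delta V_k,\delta\varPsi_k)\|_{j,*,T}\lesssim \boldsymbol{\epsilon}\theta_k^{j-\alpha-1}\varDelta_k$ for $j\in\{6,s+2\}$; the smoothing estimate \eqref{tri2} provides $\|(R_k^1,R_k^2)\|_{j,*,T}\lesssim \boldsymbol{\epsilon}\theta_k^{j-\alpha}$ for the same range; and Lemma \ref{lem:triangle} bounds the coefficient factor $\|(\widetilde V,\widetilde\varPsi)\|_{s+2,*,T}$ appearing in the cubic contribution of Proposition \ref{pro:tame2} by $\boldsymbol{\epsilon}\theta_k^{(s+2-\alpha)_+}$, respectively by $\boldsymbol{\epsilon}\log\theta_k$ at the endpoint $s+2=\alpha$. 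After using $\varDelta_k\lesssim \theta_k^{-1}$, the three resulting contributions carry exponents $s+7-2\alpha$, $s+7-2\alpha$ and $11-2\alpha+(s+2-\alpha)_+$ in $\theta_k$, whose maximum is precisely $\varsigma_2(s)-1$; this yields the announced bound on $\|e_k''\|_{s,*,T}$.

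The boundary estimate for $\tilde e_k''$ is obtained identically using the second half of Proposition \ref{pro:tame2}, combined with the trace inequality $\|w\|_{H^s(\varSigma_T)}\lesssim \|w\|_{s+1,*,T}$ to dispose of $W$-factors; the structure of $\mathbb{B}''$ in \eqref{B''.form} is bilinear in the same fashion as $\mathbb{L}''$ above, so the bookkeeping proceeds in parallel and produces the same exponent $\varsigma_2(s)$. The main obstacle will be the endpoint $s+2=\alpha$, where Lemma \ref{lem:triangle} yields a logarithm in place of a power: this factor only multiplies the strictly subdominant cubic contribution, and the standing assumption $\alpha\geq 7$ provides a gap of at least one unit in $\theta_k$ between the dominant and subdominant terms, so the logarithm is easily absorbed for $\theta_0$ large enough.
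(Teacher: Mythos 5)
Your proposal reproduces the paper's proof step for step: the same Taylor--FTC representation of $e_k''$ and $\tilde e_k''$ with coefficient state interpolated between $U^a+\mathcal{S}_{\theta_k}V_k$ and $U^a+V_k$, the same application of Proposition \ref{pro:tame2} under the uniform $W_*^{2,\infty}\times W^{1,\infty}$ bound supplied by \eqref{small} and \eqref{tri1}--\eqref{tri3}, and the same exponent bookkeeping that reduces to comparing $s+7-2\alpha$ against $(s+2-\alpha)_++11-2\alpha$ with a separate logarithm-absorbing argument at the endpoint $s+2=\alpha$. The only cosmetic difference is that the factor $\varDelta_k$ is already supplied by the bound on $\delta V_k$, so the extra appeal to $\varDelta_k\lesssim\theta_k^{-1}$ in the bilinear contributions is not needed; that does not affect correctness.
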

\begin{proof}
We can rewrite the term $\tilde{e}_k''$ as
\begin{align}
\nonumber \tilde{e}_k''=
\int_{0}^{1}&\mathbb{B}''\Big(U^a+\mathcal{S}_{\theta_k}V_k+\tau({I}-\mathcal{S}_{\theta_k})V_k,\,
\varphi^a+\mathcal{S}_{\theta_k}\psi_k \\
&\quad\ \  +\tau({I}-\mathcal{S}_{\theta_k})\psi_k\Big)
\Big(\big(\delta V_k ,\delta \psi_k\big),\, \big(({I}-\mathcal{S}_{\theta_k})V_k,({I}-\mathcal{S}_{\theta_k})\psi_k\big) \Big)\d\tau.
\nonumber
\end{align}
It follows from the embedding theorem, hypothesis $(\mathbf{H}_{{n}-1})$, and \eqref{small}--\eqref{tri3} that
\begin{align*}
\|(\widetilde{ U}^a,\mathcal{S}_{\theta_k} V_{k},V_{k},\varPsi^a, \mathcal{S}_{\theta_k}\varPsi_{k},\varPsi_{k})\|_{W_*^{2,\infty}(\varOmega_T)}
+\|(\varphi^a,\mathcal{S}_{\theta_k}\psi_{k},  \psi_{k})\|_{W^{1,\infty}(\varSigma_T)}
\lesssim 1.
\end{align*}
Then we can employ Proposition \ref{pro:tame2} to infer that, for $s= 6,\ldots,\widetilde{\alpha}-2$,
\begin{align*}
\|\tilde{e}_k''\|_{H^{s}(\varSigma_T)}
\lesssim  \boldsymbol{\epsilon}^2 \theta_k^{s+7-2{\alpha }}\varDelta_{k}
+\boldsymbol{\epsilon}^2 \theta_k^{11-2{\alpha }}\varDelta_{k}
\big(1 + \|(\mathcal{S}_{\theta_k} \psi_k, \psi_k)\|_{H^{s+2}(\varSigma_T)}\big).
\end{align*}
Analyzing the cases $s+2\neq {\alpha }$ and $s+2= {\alpha }$ separately as in the proof of Lemma \ref{lem:quad}, we utilize \eqref{tri1}--\eqref{tri3} to derive \eqref{1st.sub} and complete the proof.
\end{proof}

In order to solve \eqref{effective.NM}, the smooth modified state $(V_{{n}+1/2},\psi_{{n}+1/2})$ will be constructed to ensure that the constraints \eqref{bas1a}--\eqref{bas1c} hold for $(U^a+V_{{n}+1/2},\varphi^a+\psi_{{n}+1/2})$.
For $T>0$ sufficiently small, $(U^a+V_{{n}+1/2},\varphi^a+\psi_{{n}+1/2})$ will satisfy \eqref{bas1a} and \eqref{bas1c}, because $(V_{{n}+1/2},\psi_{{n}+1/2})$ will be specified to vanish in the past and $(U^a,\varphi^a)$ satisfies \eqref{app1a}--\eqref{app1b} and \eqref{app2b}.
Hence it suffices to focus on the constraints \eqref{bas1b}.
As a matter of fact, we can quote the following proposition in our previous paper \cite{TW21MR4201624}, since the construction and estimate of the smooth modified state therein are independent of the second boundary condition in \eqref{NP1b}.

\begin{proposition}[{\cite[Proposition 4.8]{TW21MR4201624}}]
\label{pro:modified}
Let ${\alpha }\geq 8$.
Then there are $V_{n+1/2}$ and $\psi_{n+1/2}$ vanishing in the past,
such that $(U^a+V_{n+1/2},\varphi^a+\psi_{n+1/2})$ satisfies \eqref{bas1b} for the approximate solution $(U^a, \varphi^a)$ constructed in Lemma \ref{lem:app}.
Moreover,
\begin{alignat}{3}
&\psi_{n+1/2}=\mathcal{S}_{\theta_n}\psi_{{n}},\quad
v_{2,n+1/2}=\mathcal{S}_{\theta_n} v_{2,n},
\quad  v_{3,n+1/2}=\mathcal{S}_{\theta_n} v_{3,n},  \label{MS.id1}\\
&\|\mathcal{S}_{\theta_n}\varPsi_n-\varPsi_{n+1/2}\|_{s,*,T}\lesssim \boldsymbol{\epsilon} \theta_n^{s-{\alpha }}
\qquad  \textrm{for } s=6,\ldots,\widetilde{\alpha}+6, \label{MS.est1} \\
&\|\mathcal{S}_{\theta_n}V_n-V_{n+1/2}\|_{s,*,T}\lesssim \boldsymbol{\epsilon} \theta_n^{s+2-{\alpha }}
\qquad  \textrm{for } s=6,\ldots,\widetilde{\alpha}+{4}.  \label{MS.est2}
\end{alignat}
\end{proposition}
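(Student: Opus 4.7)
The plan is to mimic the construction from \cite{TW21MR4201624} and to exploit the key observation that the two conditions encoded in \eqref{bas1b}, namely $\p_t\mathring{\varphi}=\mathring{v}\cdot\mathring{N}$ and $\mathring{H}\cdot\mathring{N}=0$, do not involve the total pressure $q$; the surface-tension boundary condition $q=\mathfrak{s}\mathcal{H}(\varphi)$ (the second component of $\mathbb{B}$) therefore plays no role, so the earlier construction applies verbatim. The hyperbolicity bound \eqref{bas1a} and the $W^{3,\infty}\times W^{4,\infty}$ bound \eqref{bas1c} will be automatic for $T$ small, because $(V_{n+1/2},\psi_{n+1/2})$ will vanish in the past, $(U^a,\varphi^a)$ satisfies \eqref{app1a}--\eqref{app1b}, and the corrections are uniformly small by hypothesis $(\mathbf{H}_{n-1})$ together with Lemma \ref{lem:triangle}.

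First I would fix the predetermined components by smoothing, as dictated by \eqref{MS.id1}: set $\psi_{n+1/2}:=\mathcal{S}_{\theta_n}\psi_n$ and $v_{i,n+1/2}:=\mathcal{S}_{\theta_n}v_{i,n}$ for $i=2,3$, and take $q_{n+1/2}$, $H_{2,n+1/2}$, $H_{3,n+1/2}$, $S_{n+1/2}$ to be the smoothed counterparts of the corresponding components of $V_n$. Then I would use the two boundary conditions in \eqref{bas1b} to solve algebraically for the traces of the remaining two components on $\varSigma_T$. The first condition, combined with $\p_t\varphi^a=v^a\cdot N^a$ from \eqref{app2b}, yields an explicit formula
\begin{align*}
v_{1,n+1/2}\big|_{\varSigma_T}=\p_t\psi_{n+1/2}+(v_2^a+v_{2,n+1/2})\p_2\psi_{n+1/2}+(v_3^a+v_{3,n+1/2})\p_3\psi_{n+1/2},
\end{align*}
while the second condition together with $H^a\cdot N^a=0$ gives a similar explicit formula for $H_{1,n+1/2}|_{\varSigma_T}$ in terms of $H_{2,n+1/2}, H_{3,n+1/2}, \psi_{n+1/2}$ and the smoothed approximate state. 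Both formulas involve only objects already constructed in the previous step. I would then lift these traces to $\varOmega_T$ by a standard continuous right-inverse of the trace operator adapted to the anisotropic scale $H_*^s$, arranged to vanish in the past; the other components are extended from their interior smoothed definitions unchanged.

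The estimates \eqref{MS.est1}--\eqref{MS.est2} would then follow from three ingredients: (i) the smoothing bounds \eqref{smooth.p1} from Proposition \ref{pro:smooth} and their corollaries \eqref{tri2}--\eqref{tri3}; (ii) Moser-type calculus inequalities applied to the quadratic terms $v_{i,n+1/2}\p_i\psi_{n+1/2}$ and $H_{i,n+1/2}\p_i\psi_{n+1/2}$ in the algebraic formulas; and (iii) continuity of the trace/lifting operators between $H^s(\varSigma_T)$ and $H_*^s(\varOmega_T)$. The $+2$ loss in the exponent of \eqref{MS.est2} relative to \eqref{MS.est1} reflects the fact that $V_{n+1/2}$ differs from $\mathcal{S}_{\theta_n}V_n$ precisely in the two components $v_1,H_1$, whose corrections involve products with $\mathrm{D}_{x'}\psi_{n+1/2}$, so one tangential derivative is spent on $\psi_{n+1/2}$ and one extra half-derivative on lifting a trace.

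The main technical obstacle is anisotropic: one has to lift the boundary expressions into $H_*^{s}(\varOmega_T)$ respecting the weight $\sigma(x_1)$ on the normal derivative in $\mathrm{D}_*^\alpha$, and simultaneously guarantee that the lifting, the smoothing operators, and the algebraic formulas all preserve ``vanishing for $t<0$''. Since exactly these difficulties were resolved for the same pair of boundary conditions in \cite[Proposition 4.8]{TW21MR4201624} and nothing in the surface-tension term $\mathcal{H}(\varphi)$ enters at any point of the argument, the statement follows by quoting that result directly.
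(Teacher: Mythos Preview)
Your proposal is correct and takes essentially the same approach as the paper: both observe that the constraints \eqref{bas1b} do not involve the total pressure and are therefore independent of the surface-tension boundary condition, so the construction and estimates from \cite[Proposition~4.8]{TW21MR4201624} can be quoted verbatim. One minor slip: your displayed formula for $v_{1,n+1/2}|_{\varSigma_T}$ omits the terms $v_{i,n+1/2}\p_i\varphi^a$ for $i=2,3$ coming from the cross-interaction with $N^a$, but this does not affect the argument since you ultimately defer to the cited reference.
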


The second substitution error term $e_{k}'''$ given in \eqref{decom1} can be rewritten as
\begin{align}
\nonumber {e}_k'''=\int_{0}^{1}&
\mathbb{L}''\Big(U^a+\tau(\mathcal{S}_{\theta_k} V_k-V_{k+1/2})+V_{k+1/2},\,
\varPhi^a+\tau(\mathcal{S}_{\theta_k}\varPsi_k-\varPsi_{k+1/2})
\\
&\quad\ \ +\varPsi_{k+1/2} \Big)
\Big((\delta V_k ,\delta \varPsi_k),\, (\mathcal{S}_{\theta_k} V_k-V_{k+1/2},\mathcal{S}_{\theta_k}\varPsi_k-\varPsi_{k+1/2})\Big) \d\tau.
\nonumber
\end{align}
For $\tilde{e}_{k}'''$ defined in \eqref{decom2},
we have from \eqref{MS.id1} and \eqref{B''.form} that
\begin{align}
\nonumber \tilde{e}_k'''=\int_{0}^{1}&
\mathbb{B}''\Big(U^a+\tau(\mathcal{S}_{\theta_k} V_k-V_{k+1/2})+V_{k+1/2},\,
\varphi^a
\\
&\quad\ \ +\psi_{k+1/2} \Big)
\Big((\delta V_k ,\delta \psi_k),\, (\mathcal{S}_{\theta_k} V_k-V_{k+1/2},0)\Big) \d\tau=0.
\nonumber
\end{align}
Then we can obtain the following lemma by using Propositions \ref{pro:tame2} and \ref{pro:modified}.
\begin{lemma} \label{lem:2nd}
 Let ${\alpha }\geq 8$.
 If $\boldsymbol{\epsilon}>0$ is small enough and $\theta_0\geq 1$ is sufficiently large, then
 \begin{align}\label{2st.sub}
 \tilde{e}_k'''=0,\quad  \| e_k'''\|_{s,*,T}
 \lesssim \boldsymbol{\epsilon}^2 \theta_k^{\varsigma_3(s)-1}\varDelta_k,
 \end{align}
 for $k=0,\ldots,{n}-1$ and $s=6,\ldots,\widetilde{\alpha}-2$,
 where  $$\varsigma_3(s):=\max\{s+10-2{\alpha },(s+2-{\alpha })_++14-2{\alpha } \}.$$
\end{lemma}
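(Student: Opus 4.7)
The strategy is to mirror the arguments of Lemmas \ref{lem:quad} and \ref{lem:1st}, but with two new ingredients: (i) a careful inspection of the explicit form \eqref{B''.form} of $\mathbb{B}''$ together with the identities \eqref{MS.id1}, which will be used to show that $\tilde{e}_k'''=0$ identically rather than just being small; and (ii) the modified-state estimates \eqref{MS.est1}--\eqref{MS.est2} of Proposition \ref{pro:modified}, which quantify how close $(V_{k+1/2},\varPsi_{k+1/2})$ is to $(\mathcal{S}_{\theta_k}V_k,\mathcal{S}_{\theta_k}\varPsi_k)$ and which will replace the smoothing estimate \eqref{tri2} used in Lemma \ref{lem:1st}.

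First I would establish $\tilde{e}_k'''=0$. By \eqref{MS.id1}, the second argument of $\mathbb{B}''$ in the integrand is the pair $(\mathcal{S}_{\theta_k}V_k-V_{k+1/2},\,0)$ whose tangential velocity components vanish: $\mathcal{S}_{\theta_k}v_{i,k}-v_{i,k+1/2}=0$ for $i=2,3$, and whose $\psi$-component is $\mathcal{S}_{\theta_k}\psi_k-\psi_{k+1/2}=0$. Plugging $\tilde v_2=\tilde v_3=0$, $\tilde{\psi}=0$, and hence $\tilde{\zeta}=\mathrm{D}_{x'}\tilde{\psi}=0$ into the explicit formula \eqref{B''.form}, the first component becomes $(\tilde v_2\p_2+\tilde v_3\p_3)\psi+(v_2\p_2+v_3\p_3)\tilde{\psi}=0$, and the second component vanishes since every term carries the factor $\tilde{\zeta}$. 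Hence the integrand is identically zero and $\tilde e_k'''=0$.

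Next I would estimate $e_k'''$ via Proposition \ref{pro:tame2}. For $\boldsymbol{\epsilon}$ small and $\theta_0$ large, Lemma \ref{lem:triangle}, \eqref{MS.est1}--\eqref{MS.est2}, \eqref{small}, and the embedding $H_*^6\hookrightarrow W_*^{2,\infty}$ give uniform $W_*^{2,\infty}$ control of the interior state
$U^a+\tau(\mathcal{S}_{\theta_k}V_k-V_{k+1/2})+V_{k+1/2}$ and $\varPhi^a+\tau(\mathcal{S}_{\theta_k}\varPsi_k-\varPsi_{k+1/2})+\varPsi_{k+1/2}$, so Proposition \ref{pro:tame2} applies. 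It yields, for $s=6,\ldots,\widetilde{\alpha}-2$,
\begin{align*}
\|e_k'''\|_{s,*,T}\lesssim\;&
\|(\delta V_k,\delta\varPsi_k)\|_{6,*,T}\,\|(\mathcal{S}_{\theta_k}V_k-V_{k+1/2},\mathcal{S}_{\theta_k}\varPsi_k-\varPsi_{k+1/2})\|_{s+2,*,T}\\
&+\|(\delta V_k,\delta\varPsi_k)\|_{s+2,*,T}\,\|(\mathcal{S}_{\theta_k}V_k-V_{k+1/2},\mathcal{S}_{\theta_k}\varPsi_k-\varPsi_{k+1/2})\|_{6,*,T}\\
&+\|(\delta V_k,\delta\varPsi_k)\|_{6,*,T}\,\|(\mathcal{S}_{\theta_k}V_k-V_{k+1/2},\mathcal{S}_{\theta_k}\varPsi_k-\varPsi_{k+1/2})\|_{6,*,T}\\
&\quad\times\|(\widetilde V,\widetilde\varPsi)\|_{s+2,*,T},
\end{align*}
where $(\widetilde V,\widetilde\varPsi)$ denotes the interior state appearing in Proposition \ref{pro:tame2}. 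Inserting the bounds from hypothesis $(\mathbf H_{n-1})$ and from Proposition \ref{pro:modified}, the first two lines contribute $\boldsymbol{\epsilon}^2\theta_k^{s+9-2\alpha}\varDelta_k$, while the third, combined with $\|(\widetilde V,\widetilde\varPsi)\|_{s+2,*,T}\lesssim 1+\|(\mathcal{S}_{\theta_k}V_k,\mathcal{S}_{\theta_k}\varPsi_k)\|_{s+2,*,T}+\|(V_{k+1/2},\varPsi_{k+1/2})\|_{s+2,*,T}$ controlled through Lemma \ref{lem:triangle} and \eqref{MS.est1}--\eqref{MS.est2}, produces $\boldsymbol{\epsilon}^2\theta_k^{13-2\alpha}\varDelta_k(1+\theta_k^{(s+2-\alpha)_+})$. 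Splitting the cases $s+2\neq\alpha$ and $s+2=\alpha$ as in the proof of Lemma \ref{lem:quad} and taking the larger of the two powers of $\theta_k$ yields $\|e_k'''\|_{s,*,T}\lesssim\boldsymbol{\epsilon}^2\theta_k^{\varsigma_3(s)-1}\varDelta_k$ with the stated $\varsigma_3$.

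The main obstacle is bookkeeping: compared with Lemma \ref{lem:1st} the noncharacteristic estimate \eqref{MS.est2} is two derivatives worse than \eqref{MS.est1}, which accounts for the shift from $\varsigma_2$ (which has $8$, $12$) to $\varsigma_3$ (which has $10$, $14$). One must track this loss consistently across all cross-terms, in particular in the cubic term where $(\widetilde V,\widetilde\varPsi)$ itself contains a piece of $V_{k+1/2}$, so that the extra power of $\theta_k$ is not underestimated.
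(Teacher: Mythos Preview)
Your proposal is correct and follows exactly the paper's approach: the paper establishes $\tilde{e}_k'''=0$ from \eqref{MS.id1} and the explicit form \eqref{B''.form} (as you do), and then simply states that the estimate for $e_k'''$ follows from Propositions \ref{pro:tame2} and \ref{pro:modified}, whose details you have filled in correctly, including the two-derivative loss in \eqref{MS.est2} that produces the shift from $\varsigma_2$ to $\varsigma_3$.
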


The next lemma provides the estimate of $D_{k+1/2}\delta\varPsi_k$ defined by \eqref{error.D}.
\begin{lemma}[{\cite[Lemma 4.10]{TW21MR4201624}}]   \label{lem:last}
 Let ${\alpha }\geq 8$ and $\widetilde{\alpha}\geq {\alpha }+2$.
 If $\boldsymbol{\epsilon}>0$ is small enough and $\theta_0\geq 1$ is sufficiently large, then
 \begin{align} \label{last.e0}
 \|D_{k+1/2}\delta\varPsi_k\|_{s,*,T}\lesssim \boldsymbol{\epsilon}^2 \theta_k^{\varsigma_4 (s)-1}\varDelta_k,
 \end{align}
 for $k=0,\ldots,{n}-1$ and $s=6,\ldots,\widetilde{\alpha}-2$,
 where
 \begin{align} \label{varsigma4.def}
 \varsigma_4(s):=\max\{s+12-2{\alpha },(s-{\alpha })_++18-2{\alpha }\}.
 \end{align}
\end{lemma}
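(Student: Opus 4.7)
The plan is to use the fact that $(U^a+V_{k+1/2},\varPhi^a+\varPsi_{k+1/2})$ is close to the approximate solution $(U^a,\varPhi^a)$ for which $\mathbb{L}(U^a,\varPhi^a)=-f^a$ is small, and then reduce the estimate to a combination of the Moser-type calculus inequalities, the smoothing/modification bounds of Proposition \ref{pro:modified}, and the inductive hypothesis $(\mathbf{H}_{n-1})$.

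First I would write, by Taylor's formula,
\[
\mathbb{L}(U^a+V_{k+1/2},\varPhi^a+\varPsi_{k+1/2})
=-f^a+\int_0^1 \mathbb{L}'\bigl(U^a+\tau V_{k+1/2},\varPhi^a+\tau\varPsi_{k+1/2}\bigr)\bigl(V_{k+1/2},\varPsi_{k+1/2}\bigr)\d\tau.
\]
Applying $\p_1$ and dividing by $\p_1(\varPhi^a+\varPsi_{k+1/2})$, which is bounded below on $\varOmega_T$ by \eqref{app1b} and the smallness of $\varPsi_{k+1/2}$ guaranteed by Proposition \ref{pro:modified}, splits $D_{k+1/2}\delta\varPsi_k$ into an $f^a$-piece and a linearization-piece. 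The $f^a$-piece is easy: the smallness $\|f^a\|_{H^{\widetilde\alpha+5}(\varOmega_T)}\leq\delta_0(T)$ from \eqref{small}, together with the Moser product rule and the bound $\|\delta\varPsi_k\|_{s,*,T}\lesssim\boldsymbol{\epsilon}\theta_k^{s-\alpha-1}\varDelta_k$ from $(\mathbf{H}_{n-1})$, produces a contribution of order $\boldsymbol{\epsilon}^2\theta_k^{s-\alpha-1}\varDelta_k$, which is dominated by the claimed bound once $\alpha\geq 8$.

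For the linearization-piece I would use the standard tame estimate for $\p_1\mathbb{L}'(\cdots)(V_{k+1/2},\varPsi_{k+1/2})$, which controls its $H_*^s$-norm by
\[
C\bigl(\|(V_{k+1/2},\varPsi_{k+1/2})\|_{6,*,T}\bigr)\Bigl\{\|(V_{k+1/2},\varPsi_{k+1/2})\|_{s+2,*,T}+\|(V_{k+1/2},\varPsi_{k+1/2})\|_{6,*,T}\,\|(\widetilde U^a,\varPsi^a)\|_{s+2,*,T}\Bigr\}.
\]
Then I would invoke \eqref{MS.est1}--\eqref{MS.est2} and \eqref{tri3} to bound $\|(V_{k+1/2},\varPsi_{k+1/2})\|_{r,*,T}$ by $\boldsymbol{\epsilon}\theta_k^{(r-\alpha)_+}$ (with a $\log\theta_k$ factor at $r=\alpha$, which gets absorbed), multiply by $\|\delta\varPsi_k\|_{6,*,T}\lesssim\boldsymbol{\epsilon}\theta_k^{5-\alpha}\varDelta_k$ (and a product-rule counterpart when $\delta\varPsi_k$ is the high-regularity factor), and finally apply $\varDelta_k\lesssim\theta_k^{-1}$. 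The two resulting competing powers $\theta_k^{s+12-2\alpha}$ and $\theta_k^{(s-\alpha)_++18-2\alpha}$ combine into $\theta_k^{\varsigma_4(s)-1}\varDelta_k$ with $\varsigma_4$ as in \eqref{varsigma4.def}.

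The main obstacle will be the bookkeeping of derivatives: the $\p_1$ on $\mathbb{L}$, the first-order linearization $\mathbb{L}'$, and the modified-state factors $V_{k+1/2},\varPsi_{k+1/2}$ each introduce a two-derivative loss relative to the naive count, which is exactly what turns the exponents $\varsigma_3(s)$ of Lemma \ref{lem:2nd} into the heavier $\varsigma_4(s)$. Matching these exponents (rather than carrying out the product-rule expansion itself, which is routine) is the delicate part, and requires the additional regularity assumption $\widetilde\alpha\geq\alpha+2$ so that the argument $s+2$ in the tame estimate above never exceeds the regularity provided by \eqref{MS.est1}--\eqref{MS.est2}. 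Otherwise, the argument is essentially the same as in \cite[Lemma 4.10]{TW21MR4201624}, since the presence of surface tension does not enter the definition \eqref{error.D} of $D_{k+1/2}$.
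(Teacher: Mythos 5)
The paper does not reprove Lemma~\ref{lem:last}; it is quoted verbatim from \cite[Lemma 4.10]{TW21MR4201624}, and the argument there uses a different decomposition from yours that is actually essential for the claimed exponent.

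Your decomposition Taylor-expands $\mathbb{L}(U^a+V_{k+1/2},\varPhi^a+\varPsi_{k+1/2})$ at the approximate solution $(U^a,\varPhi^a)$, producing $-f^a$ plus $\Lambda:=\int_0^1 \mathbb{L}'(\cdots)(V_{k+1/2},\varPsi_{k+1/2})\d\tau$. The difficulty is that neither piece has the right $\theta_k$--decay. By Lemma~\ref{lem:triangle} and \eqref{MS.est1}--\eqref{MS.est2}, $\|V_{k+1/2}\|_{r,*,T}\lesssim\boldsymbol{\epsilon}\theta_k^{(r+2-\alpha)_+}$, which for the low norm $r\leq\alpha-2$ is only $O(\boldsymbol{\epsilon})$ and does \emph{not} decay in $\theta_k$. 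Hence the Moser-product term $\|\p_1\Lambda\|_{W^{1,\infty}}\|\delta\varPsi_k\|_{s,*,T}$ is of size $\boldsymbol{\epsilon}\cdot\boldsymbol{\epsilon}\theta_k^{s-\alpha-1}\varDelta_k$, and the same power arises in your $f^a$-piece. But for $6\leq s\leq\alpha$ one has $\varsigma_4(s)-1=s+11-2\alpha$, and $s-\alpha-1\leq s+11-2\alpha$ holds \emph{only when} $\alpha\leq 12$; for $\alpha>12$ (which occurs whenever $m>20$ in Theorem~\ref{thm:main}, since $\alpha=m-8$) the exponent $s-\alpha-1$ exceeds $\varsigma_4(s)-1$ and your bound is strictly weaker than \eqref{last.e0}. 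Taking $T$ small does not rescue the $f^a$-piece either: the factor $\delta_0(T)$ is $\theta_k$-independent, so it cannot compensate the deficit $\theta_k^{(s-\alpha-1)-(\varsigma_4(s)-1)}=\theta_k^{\alpha-12}\to\infty$.

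The proof in \cite[Lemma 4.10]{TW21MR4201624} instead decomposes around the iterate, not the approximate solution:
\[
\mathbb{L}(U^a+V_{k+1/2},\varPhi^a+\varPsi_{k+1/2})
=\underbrace{\mathbb{L}(U^a+V_k,\varPhi^a+\varPsi_k)}_{=\ \mathcal{L}(V_k,\varPsi_k)-f^a}
+\int_0^1 \mathbb{L}'(\cdots)\bigl(V_{k+1/2}-V_k,\varPsi_{k+1/2}-\varPsi_k\bigr)\d\tau .
\]
Here \emph{both} summands decay: the first is controlled by the inductive hypothesis $(\mathbf{H}_{n-1})$(b), $\|\mathcal{L}(V_k,\varPsi_k)-f^a\|_{s,*,T}\leq 2\boldsymbol{\epsilon}\theta_k^{s-\alpha-1}$, and the second involves the small increment, for which \eqref{tri2} and \eqref{MS.est1}--\eqref{MS.est2} give $\|(V_{k+1/2}-V_k,\varPsi_{k+1/2}-\varPsi_k)\|_{s,*,T}\lesssim\boldsymbol{\epsilon}\theta_k^{s+2-\alpha}$. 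One then finds $\|D_{k+1/2}\|_{6,*,T}\lesssim\boldsymbol{\epsilon}\theta_k^{12-\alpha}$, so that $\|D_{k+1/2}\|_{6,*,T}\|\delta\varPsi_k\|_{s,*,T}\lesssim\boldsymbol{\epsilon}^2\theta_k^{s+11-2\alpha}\varDelta_k=\boldsymbol{\epsilon}^2\theta_k^{(s+12-2\alpha)-1}\varDelta_k$, which is the first competitor in $\varsigma_4(s)$; the symmetric Moser term and the lower-order contributions produce the second competitor. The missing idea in your proposal is precisely this use of $(\mathbf{H}_{n-1})$(b) to make the coefficient $D_{k+1/2}$ itself small in $\theta_k$, rather than merely $O(\boldsymbol{\epsilon})$.
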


With Lemmas \ref{lem:quad}--\ref{lem:last} in hand,
we can estimate the accumulated error terms $E_n$ and $\widetilde{E}_n$
defined by \eqref{E.E.tilde} (also see \eqref{e.e.tilde}).

\begin{lemma}[{\cite[Lemma 4.12]{TW21MR4201624}}] \label{lem:sum2}
 Let ${\alpha }\geq 12$ and $\widetilde{\alpha}={\alpha }+3$.
 If $\boldsymbol{\epsilon}>0$ is small enough and $\theta_0\geq 1$ is sufficiently large, then
 \begin{align}\label{es.sum2}
 \|E_{{n}}\|_{{\alpha }+1,*,T}\lesssim \boldsymbol{\epsilon}^2 \theta_{{n}},\quad
 \|\widetilde{E}_{{n}}\|_{H^{{\alpha }+1}(\varSigma_T)}\lesssim \boldsymbol{\epsilon}^2.
 \end{align}
\end{lemma}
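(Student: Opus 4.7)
The plan is to bound $\|E_n\|_{\alpha+1,*,T}$ and $\|\widetilde{E}_n\|_{H^{\alpha+1}(\varSigma_T)}$ by summing the componentwise estimates from Lemmas \ref{lem:quad}--\ref{lem:last}, after checking that the choice $s=\alpha+1$ falls within each lemma's valid range. Since $\widetilde{\alpha}=\alpha+3$, we have $\alpha+1\le\widetilde{\alpha}-2$, so the estimates \eqref{quad.est}, \eqref{1st.sub}, \eqref{2st.sub}, \eqref{last.e0} are all applicable at $s=\alpha+1$. The key arithmetic step is to evaluate the four exponents $\varsigma_i(\alpha+1)$ for $\alpha\ge 12$:
\begin{align*}
\varsigma_1(\alpha+1)&=\max\{7-\alpha,\,13-2\alpha\}=7-\alpha,\\
\varsigma_2(\alpha+1)&=\max\{9-\alpha,\,15-2\alpha\}=9-\alpha,\\
\varsigma_3(\alpha+1)&=\max\{11-\alpha,\,17-2\alpha\}=11-\alpha,\\
\varsigma_4(\alpha+1)&=\max\{13-\alpha,\,19-2\alpha\}=13-\alpha.
\end{align*}
Under $\alpha\ge 12$, the first three exponents are at most $-1$ while $\varsigma_4(\alpha+1)\le 1$, with equality exactly when $\alpha=12$. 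Thus the fourth, originating from the remainder term $D_{k+1/2}\delta\varPsi_k$, is the worst and dictates the growth rate $\theta_n$.

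Next I would invoke the standard summation lemma for the dyadic-like scale $\theta_k=\sqrt{\theta_0^2+k}$, namely $\varDelta_k=\theta_{k+1}-\theta_k\sim\tfrac{1}{2}\theta_k^{-1}$, which gives
\begin{align*}
\sum_{k=0}^{n-1}\theta_k^{c-1}\varDelta_k\;\lesssim\;
\begin{cases}\theta_n^c,&c>0,\\[1mm] \log\theta_n,&c=0,\\[1mm] 1,&c<0,\end{cases}
\end{align*}
(this is the discrete version of $\int_0^{\theta_n}t^{c-1}\,\mathrm{d}t$, and is used implicitly throughout \cite{TW21MR4201624} and related works). Applying the triangle inequality to $E_n=\sum_{k=0}^{n-1}(e_k'+e_k''+e_k'''+D_{k+1/2}\delta\varPsi_k)$ and inserting the bounds above yields
\begin{align*}
\|E_n\|_{\alpha+1,*,T}
\lesssim\boldsymbol{\epsilon}^2\sum_{i=1}^{4}\sum_{k=0}^{n-1}\theta_k^{\varsigma_i(\alpha+1)-1}\varDelta_k
\lesssim\boldsymbol{\epsilon}^2\bigl(1+\theta_n^{(13-\alpha)_+}\bigr)\lesssim\boldsymbol{\epsilon}^2\theta_n,
\end{align*}
where at the last step I use that the first three exponents contribute bounded sums, and the fourth contributes at most $\theta_n$ (equal to $\theta_n$ when $\alpha=12$, and bounded when $\alpha>12$).

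For the boundary accumulated error, $\widetilde{E}_n=\sum_{k=0}^{n-1}(\tilde e_k'+\tilde e_k'')$ since $\tilde e_k'''=0$ by Lemma \ref{lem:2nd}. Only the exponents $\varsigma_1(\alpha+1)=7-\alpha$ and $\varsigma_2(\alpha+1)=9-\alpha$ appear, both strictly negative for $\alpha\ge 12$. Hence the summation lemma yields a bounded sum, and
\begin{align*}
\|\widetilde{E}_n\|_{H^{\alpha+1}(\varSigma_T)}
\lesssim\boldsymbol{\epsilon}^2\sum_{i=1}^{2}\sum_{k=0}^{n-1}\theta_k^{\varsigma_i(\alpha+1)-1}\varDelta_k\lesssim\boldsymbol{\epsilon}^2,
\end{align*}
finishing the proof.

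There is no real obstacle here: the lemma is essentially an accounting exercise organising the four error bounds from Lemmas \ref{lem:quad}--\ref{lem:last}. The only point that deserves care is checking that the choice of $\widetilde{\alpha}=\alpha+3$ is compatible with the requirement $s\le\widetilde{\alpha}-2$ in those lemmas, and that the loss from the term $D_{k+1/2}\delta\varPsi_k$ (which encodes the Alinhac "last term" in \eqref{Alinhac}) remains within a single factor of $\theta_n$ precisely at the threshold $\alpha=12$, dictating the minimal regularity requirement in the Nash--Moser scheme.
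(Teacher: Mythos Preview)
Your proof is correct and is precisely the standard argument one expects for this lemma. The paper itself does not supply a proof here but simply cites \cite[Lemma 4.12]{TW21MR4201624}; your reconstruction---evaluating $\varsigma_i(\alpha+1)$, checking that $s=\alpha+1$ hits the upper endpoint $\widetilde{\alpha}-2$ of the admissible range, and invoking the Riemann-sum estimate $\sum_{k}\theta_k^{c-1}\varDelta_k\lesssim\theta_n^{c}$ (resp.\ $\log\theta_n$, resp.\ bounded) according to the sign of $c$---is exactly the argument in the cited reference.
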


\subsection{Proof of Existence} \label{sec:proof4}

To derive hypothesis $(\mathbf{H}_{{n}})$ from $(\mathbf{H}_{{n}-1})$, we need the following estimates of $f_{{n}}$ and $g_{{n}}$ given in \eqref{source}. The proof can be found in \cite{TW21MR4201624}.

\begin{lemma}[{\cite[Lemma 4.13]{TW21MR4201624}}]  \label{lem:source}
Let ${\alpha }\geq 12$ and $\widetilde{\alpha}={\alpha }+3$.
If $\boldsymbol{\epsilon}>0$ is small enough and $\theta_0\geq 1$ is sufficiently large, then
\begin{align}
&
\|f_{{n}}\|_{s,*,T}
\lesssim \varDelta_{{n}}\big(\theta_{{n}}^{s-{\alpha }-1} \|f^a\|_{{\alpha },*,T}
+\boldsymbol{\epsilon}^2 \theta_{{n}}^{s-{\alpha }-1} +\boldsymbol{\epsilon}^2\theta_{{n}}^{\varsigma_4(s)-1}\big),
\nonumber \\ 
&
\|g_{{n}}\|_{H^{s+1}(\varSigma_T)}
\lesssim  \boldsymbol{\epsilon}^2 \varDelta_{{n}}\big(\theta_{{n}}^{s-{\alpha }-1}
+\theta_{{n}}^{\varsigma_2(s+1)-1}\big),\nonumber 
\end{align}
for $s=6,\ldots,\widetilde{\alpha}$,
where  $\varsigma_2(s)$ and $\varsigma_4(s)$ are given in \eqref{varsigma2.def} and \eqref{varsigma4.def} respectively.
\end{lemma}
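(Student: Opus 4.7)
The plan is to represent $f_n$ and $g_n$ as telescoping differences of the defining relations \eqref{source} at consecutive steps and then estimate each piece using the smoothing properties of Proposition \ref{pro:smooth} together with the accumulated-error bound from Lemma \ref{lem:sum2} and the single-step error bounds from Lemmas \ref{lem:quad}--\ref{lem:last}. Subtracting \eqref{source} at step $n-1$ from \eqref{source} at step $n$ and using $E_n=E_{n-1}+e_{n-1}$, $\widetilde{E}_n=\widetilde{E}_{n-1}+\tilde{e}_{n-1}$ one gets
\begin{align*}
f_n &=(\mathcal{S}_{\theta_n}-\mathcal{S}_{\theta_{n-1}})(f^a-E_{n-1})-\mathcal{S}_{\theta_n}e_{n-1},\\
g_n &=-(\mathcal{S}_{\theta_n}-\mathcal{S}_{\theta_{n-1}})\widetilde{E}_{n-1}-\mathcal{S}_{\theta_n}\tilde{e}_{n-1}.
\end{align*}

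The key analytic ingredient is the increment estimate
\[
\|(\mathcal{S}_{\theta_n}-\mathcal{S}_{\theta_{n-1}})u\|_{s,*,T}\lesssim \varDelta_n\,\theta_n^{s-j-1}\|u\|_{j,*,T},\qquad 1\leq j\leq m,
\]
obtained by writing $\mathcal{S}_{\theta_n}-\mathcal{S}_{\theta_{n-1}}=\int_{\theta_{n-1}}^{\theta_n}\frac{\mathrm{d}}{\mathrm{d}\theta}\mathcal{S}_\theta\d\theta$ and inserting \eqref{smooth.p1c} (the analogous inequality holds on the boundary in $H^{s+1}(\varSigma_T)$). Applied with $u=f^a$ and $j=\alpha$, this directly yields the contribution $\varDelta_n\theta_n^{s-\alpha-1}\|f^a\|_{\alpha,*,T}$ to $\|f_n\|_{s,*,T}$. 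Applied with $u=E_{n-1}$ and $j=\alpha+1$, combined with the accumulated-error bound $\|E_{n-1}\|_{\alpha+1,*,T}\lesssim\boldsymbol{\epsilon}^2\theta_{n-1}$ from Lemma \ref{lem:sum2}, it yields the $\boldsymbol{\epsilon}^2\varDelta_n\theta_n^{s-\alpha-1}$ piece. The parallel computation for $g_n$, using the uniform bound $\|\widetilde{E}_{n-1}\|_{H^{\alpha+1}(\varSigma_T)}\lesssim\boldsymbol{\epsilon}^2$, produces the matching $\boldsymbol{\epsilon}^2\varDelta_n\theta_n^{s-\alpha-1}$ contribution.

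For the remainders $\mathcal{S}_{\theta_n}e_{n-1}$ and $\mathcal{S}_{\theta_n}\tilde{e}_{n-1}$, I substitute the decomposition \eqref{e.e.tilde} and estimate term by term via Lemmas \ref{lem:quad}--\ref{lem:last}, using that $\tilde{e}_{n-1}'''=0$ by Lemma \ref{lem:2nd}. When $s$ lies in the range where these lemmas apply directly (i.e.\ $s\leq\widetilde\alpha-2$ for the interior and $s+1\leq\widetilde\alpha-2$ for the boundary), taking $k=j=s$ in \eqref{smooth.p1a} reduces $\|\mathcal{S}_{\theta_n}e_{n-1}\|_{s,*,T}$ to $\|e_{n-1}\|_{s,*,T}\lesssim \boldsymbol{\epsilon}^2\theta_n^{\varsigma_4(s)-1}\varDelta_n$ since $\varsigma_4$ dominates $\varsigma_1,\varsigma_2,\varsigma_3$; analogously $\|\mathcal{S}_{\theta_n}\tilde{e}_{n-1}\|_{H^{s+1}(\varSigma_T)}\lesssim \boldsymbol{\epsilon}^2\theta_n^{\varsigma_2(s+1)-1}\varDelta_n$. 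For the few remaining values of $s$ exceeding these thresholds, I pick $j=\widetilde\alpha-2$ in \eqref{smooth.p1a} to gain the factor $\theta_n^{s-\widetilde\alpha+2}$ and apply the single-step estimates at level $\widetilde\alpha-2$; with $\widetilde\alpha=\alpha+3$, a routine comparison shows that the resulting exponent is still dominated by $\varsigma_4(s)-1$ on the interior and by $\varsigma_2(s+1)-1$ on the boundary.

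The bulk of the work is bookkeeping rather than analysis, and this is where the main obstacle lies. One has to verify that, uniformly for $s=6,\dots,\widetilde\alpha$, the exponents delivered by the three building blocks (smoothing difference, accumulated error, single-step error) are indeed all subsumed by the two exponents $s-\alpha-1$ and $\varsigma_4(s)-1$ (resp.\ $s-\alpha-1$ and $\varsigma_2(s+1)-1$) appearing in the statement, and that the standing assumptions $\alpha\geq 12$ and $\widetilde\alpha=\alpha+3$ are large enough to trigger the hypotheses of Lemmas \ref{lem:quad}--\ref{lem:last} at every required level. Once these elementary exponent comparisons are carried out and the three contributions are summed, the stated bounds on $\|f_n\|_{s,*,T}$ and $\|g_n\|_{H^{s+1}(\varSigma_T)}$ follow.
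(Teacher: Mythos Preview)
Your proposal is correct and follows exactly the standard telescoping argument that the cited reference \cite[Lemma 4.13]{TW21MR4201624} uses: subtract \eqref{source} at consecutive steps, control $(\mathcal{S}_{\theta_n}-\mathcal{S}_{\theta_{n-1}})$ via \eqref{smooth.p1c}, feed in the accumulated-error bound of Lemma \ref{lem:sum2} and the single-step bounds of Lemmas \ref{lem:quad}--\ref{lem:last}, and check the exponent bookkeeping in the top range $s>\widetilde{\alpha}-2$ by smoothing down to level $\widetilde{\alpha}-2$. The only cosmetic point is that the integral $\int_{\theta_{n-1}}^{\theta_n}$ has length $\varDelta_{n-1}$ rather than $\varDelta_n$, but since $\varDelta_{n-1}\sim\varDelta_n\sim\theta_n^{-1}$ this is immaterial.
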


The next lemma follows by using \eqref{MS.est1}--\eqref{MS.est2}, the tame estimate \eqref{tame}, and Lemma \ref{lem:source}.
The proof is omitted here for brevity, since it is similar to that of \cite[Lemma 15]{T09ARMAMR2481071}.

\begin{lemma}\  \label{lem:Hl1}
Let ${\alpha }\geq 12$ and $\widetilde{\alpha}={\alpha }+3$.
If $\boldsymbol{\epsilon}>0$ and $\|f^a\|_{{\alpha },*,T}/\boldsymbol{\epsilon}$ are small enough,
and if $\theta_0\geq1$ is sufficiently large, then
\begin{align} \label{Hl.a}
\|(\delta V_{{n}},\delta\varPsi_{{n}})\|_{s,*,T}+\|(\delta\psi_{{n}},\mathrm{D}_{x'}\delta\psi_{{n}})\|_{H^{s}(\varSigma_T)}
\leq \boldsymbol{\epsilon} \theta_{{n}}^{s-{\alpha }-1}\varDelta_{{n}}.
 \end{align}
 for $s=6,\ldots,\widetilde{\alpha}$.
\end{lemma}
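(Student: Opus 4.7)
The plan is to apply the tame estimate \eqref{tame} of Theorem \ref{thm:linear} to the effective linearized problem \eqref{effective.NM}, with basic state $(\mathring U,\mathring\varphi) = (U^a + V_{n+1/2},\, \varphi^a + \psi_{n+1/2})$ constructed in Proposition \ref{pro:modified}, and source terms $(f_n,g_n)$ supplied by \eqref{source}. Then one converts the resulting bound on $\delta\dot V_n$ to one on $\delta V_n$ through the good unknown relation \eqref{good.NM}.

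First I would check that the hypotheses of Theorem \ref{thm:linear} are met. The conditions \eqref{bas1a}--\eqref{bas1b} are guaranteed by Proposition \ref{pro:modified} together with \eqref{app1a}--\eqref{app2b} once $T>0$ is small. For the bound \eqref{H:thm.linear} with fixed $K_0$, one uses the embedding $H^{10}_\ast(\varOmega_T) \hookrightarrow W^{3,\infty}(\varOmega_T)$ together with the smoothing bound \eqref{tri3} and the modification estimates \eqref{MS.est1}--\eqref{MS.est2}: since $\alpha\ge 12$, one has $10<\alpha$, and hence
\[
\|V_{n+1/2}\|_{10,\ast,T} + \|\psi_{n+1/2}\|_{H^{10}(\varSigma_T)} \lesssim \|\mathcal S_{\theta_n} V_n\|_{10,\ast,T} + \|\mathcal S_{\theta_n}\psi_n\|_{H^{10}(\varSigma_T)} + \boldsymbol\epsilon \lesssim \boldsymbol\epsilon,
\]
which combined with \eqref{small} puts $(\mathring V,\mathring\varphi)$ in a fixed ball provided $\boldsymbol\epsilon$ is small. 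This fixes $K_0$ and a corresponding $T_0(K_0)$.

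Next, applying \eqref{tame} at level $s \in \{6,\ldots,\widetilde\alpha\}$ with $m=s$ yields
\begin{align*}
\|(\delta\dot V_n,\delta\varPsi_n)\|_{s,\ast,T} &+ \|(\delta\psi_n,\mathrm{D}_{x'}\delta\psi_n)\|_{H^s(\varSigma_T)} \\
&\le C(K_0)\Big\{ \|f_n\|_{s,\ast,T} + \|g_n\|_{H^{s+1}(\varSigma_T)} \\
&\qquad\qquad + \|(V_{n+1/2},\varPsi_{n+1/2})\|_{s+4,\ast,T}\big(\|f_n\|_{6,\ast,T}+\|g_n\|_{H^7(\varSigma_T)}\big)\Big\}.
\end{align*}
Plugging in Lemma \ref{lem:source} gives $\|f_n\|_{6,\ast,T}+\|g_n\|_{H^7(\varSigma_T)} \lesssim \boldsymbol\epsilon\varDelta_n\, \theta_n^{\max\{5-\alpha,\, \varsigma_4(6)-1,\, \varsigma_2(7)-1\}}$, which is $\le \boldsymbol\epsilon\varDelta_n \theta_n^{-1}$ once $\alpha\ge 12$ because then $\varsigma_4(6) = 24-2\alpha \le 0$ and $\varsigma_2(7) = 15-2\alpha \le 0$. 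For the factor $\|(V_{n+1/2},\varPsi_{n+1/2})\|_{s+4,\ast,T}$, I would again use \eqref{tri3} with \eqref{MS.est1}--\eqref{MS.est2} at level $s+4$: since $\widetilde\alpha = \alpha+3$, we have $s+4 \le \alpha + 7$, which lies within the range where \eqref{tri3} applies and gives the bound $\lesssim \boldsymbol\epsilon\,\theta_n^{(s+4-\alpha)_+}$. Similarly Lemma \ref{lem:source} controls $\|f_n\|_{s,\ast,T} + \|g_n\|_{H^{s+1}(\varSigma_T)}$ by $\boldsymbol\epsilon\varDelta_n\big(\theta_n^{s-\alpha-1} + \theta_n^{\varsigma_4(s)-1} + \theta_n^{\varsigma_2(s+1)-1}\big)$, plus the $f^a$ contribution which is absorbed once $\|f^a\|_{\alpha,\ast,T}/\boldsymbol\epsilon$ is small.

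Putting these pieces together produces an upper bound of the form $\boldsymbol\epsilon\varDelta_n\big(\theta_n^{s-\alpha-1} + \theta_n^{\varsigma_2(s+1)-1} + \theta_n^{\varsigma_4(s)-1} + \theta_n^{(s+4-\alpha)_+ - 1}\big)$. The main obstacle, and what drives the choice $\alpha\ge 12$ and $\widetilde\alpha = \alpha+3$, is verifying that each of the non-principal exponents is bounded by $s-\alpha-1$ throughout the range $6 \le s \le \widetilde\alpha$. A direct check using \eqref{varsigma2.def} and \eqref{varsigma4.def} gives $\varsigma_2(s+1)-1 \le s-\alpha-1$ and $\varsigma_4(s)-1 \le s-\alpha-1$ precisely when $\alpha \ge 12$, while $(s+4-\alpha)_+ - 1 \le s-\alpha-1$ is trivial. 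Finally, to pass from $\delta\dot V_n$ to $\delta V_n$ via \eqref{good.NM}, one uses the Moser-type product estimate combined with \eqref{bas1c}-type bounds on $\partial_1(\varPhi^a+\varPsi_{n+1/2})^{-1}$ and $\partial_1(U^a+V_{n+1/2})$, losing at most one more power of $\theta_n$ in the high-norm factor, which is still covered by the same margin. Taking $\boldsymbol\epsilon$ small enough to absorb the constant $C(K_0)$ yields \eqref{Hl.a}.
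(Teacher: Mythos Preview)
Your approach is exactly the one the paper indicates (apply the tame estimate \eqref{tame} to \eqref{effective.NM}, feed in Lemma~\ref{lem:source} for the sources, use \eqref{tri3} and \eqref{MS.est1}--\eqref{MS.est2} for the high-norm coefficient, then undo the good unknown). However, there is a genuine arithmetic gap in your exponent bookkeeping for the cross term.

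You correctly compute that the low-norm source satisfies
\[
\|f_n\|_{6,*,T}+\|g_n\|_{H^7(\varSigma_T)}\lesssim \boldsymbol\epsilon\,\varDelta_n\,\theta_n^{\max\{5-\alpha,\,\varsigma_4(6)-1,\,\varsigma_2(7)-1\}}
= \boldsymbol\epsilon\,\varDelta_n\,\theta_n^{5-\alpha},
\]
but you then throw this away and replace it by $\boldsymbol\epsilon\varDelta_n\theta_n^{-1}$. With that weaker bound, your subsequent claim that ``$(s+4-\alpha)_+ -1 \le s-\alpha-1$ is trivial'' is actually \emph{false}: for $s=6$ and $\alpha=12$ it reads $-1\le -7$. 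The remedy is to keep the sharp exponent $5-\alpha$ on the low-norm source; then the cross term carries the exponent $(s+4-\alpha)_+ + 5-\alpha$, and one checks directly that $(s+4-\alpha)_+ + 5-\alpha \le s-\alpha-1$ for all $s\ge 6$ and $\alpha\ge 10$ (split into $s+4\le\alpha$ and $s+4>\alpha$). Also note that the high-norm factor in \eqref{tame} is $\|(\mathring V,\mathring\varPsi)\|_{s+4,*,T}$ with $\mathring V=\widetilde U^a+V_{n+1/2}$, so you must include the $\widetilde U^a$ contribution; by \eqref{small} this is bounded by $C(M_0)$ uniformly in $n$ for $s\le\widetilde\alpha$, and multiplied by $\boldsymbol\epsilon\varDelta_n\theta_n^{5-\alpha}$ it again gives exponent $5-\alpha\le s-\alpha-1$. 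With these two corrections your sketch goes through. (Incidentally, $\varsigma_4(6)=18-2\alpha$, not $24-2\alpha$.)
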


The above lemma provides the estimate (a) in hypothesis $(\mathbf{H}_{{n}})$.
The other estimates in $(\mathbf{H}_{{n}})$ are given in the following lemma,
whose proof is similar to that of \cite[Lemma 16]{T09ARMAMR2481071}.

\begin{lemma}\ \label{lem:Hl2}
Let ${\alpha }\geq 12$ and $\widetilde{\alpha}={\alpha }+3$.
If $\boldsymbol{\epsilon}>0$ and $\|f^a\|_{{\alpha },*,T}/\boldsymbol{\epsilon}$ are small enough,
and if $\theta_0\geq1$ is sufficiently large, then
\begin{alignat}{3}\label{Hl.b}
&
\|\mathcal{L}( V_{{n}},  \varPsi_{{n}})-f^a\|_{s,*,T}\leq 2 \boldsymbol{\epsilon} \theta_{{n}}^{s-{\alpha }-1}
\quad &&\textrm{for } s=6,\ldots,\widetilde{\alpha}-1,\\
\label{Hl.c}
& \|\mathcal{B}( V_{{n}} ,  \psi_{{n}})\|_{H^{s}(\varSigma_T)}\leq  \boldsymbol{\epsilon} \theta_{{n}}^{s-{\alpha }-1}
\quad &&\textrm{for } s=7,\ldots,{\alpha }.
\end{alignat}
\end{lemma}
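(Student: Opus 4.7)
The strategy is the standard telescoping argument for Nash--Moser schemes: one writes the nonlinear residuals $\mathcal{L}(V_n,\varPsi_n)-f^a$ and $\mathcal{B}(V_n,\psi_n)$ as a telescoping sum of increments arising from \eqref{decom1}--\eqref{decom2}, compares the sum of sources $f_k, g_k$ with the target via \eqref{source}, and then absorbs the difference into smoothing errors on $f^a$, $E_{n-1}$, $\widetilde{E}_{n-1}$ plus the last error term $e_{n-1}$, $\tilde e_{n-1}$.

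First I would collect the telescoping identity. From \eqref{decom1}--\eqref{decom2}, \eqref{effective.NM}, and \eqref{e.e.tilde},
\begin{align*}
\mathcal{L}(V_{k+1},\varPsi_{k+1})-\mathcal{L}(V_k,\varPsi_k) &= f_k+e_k,\\
\mathcal{B}(V_{k+1},\psi_{k+1})-\mathcal{B}(V_k,\psi_k) &= g_k+\tilde e_k.
\end{align*}
Summing over $k=0,\ldots,n-1$, using $(V_0,\psi_0)=0$, $\mathcal{L}(0,0)=0$, and $\mathcal{B}(0,0)=\mathbb{B}(U^a,\varphi^a)=0$ from \eqref{app2b}, and then eliminating $\sum_{k=0}^{n-1}f_k$ and $\sum_{k=0}^{n-1}g_k$ via \eqref{source} (with $n$ replaced by $n-1$), I obtain
\begin{align*}
\mathcal{L}(V_n,\varPsi_n)-f^a &= (\mathcal{S}_{\theta_{n-1}}-I)f^a+(I-\mathcal{S}_{\theta_{n-1}})E_{n-1}+e_{n-1},\\
\mathcal{B}(V_n,\psi_n)        &= (I-\mathcal{S}_{\theta_{n-1}})\widetilde{E}_{n-1}+\tilde e_{n-1}.
\end{align*}

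Next I would estimate each summand. For the smoothing terms on $f^a$ and the accumulated errors I invoke \eqref{smooth.p1b} together with \eqref{small} and Lemma \ref{lem:sum2}: for $s$ in the allowed range,
\begin{align*}
\|(\mathcal{S}_{\theta_{n-1}}-I)f^a\|_{s,*,T} &\lesssim \theta_{n-1}^{s-\alpha}\|f^a\|_{\alpha,*,T},\\
\|(I-\mathcal{S}_{\theta_{n-1}})E_{n-1}\|_{s,*,T} &\lesssim \theta_{n-1}^{s-\alpha-2}\|E_{n-1}\|_{\alpha+1,*,T}\lesssim \boldsymbol{\epsilon}^2\theta_{n-1}^{s-\alpha-1},\\
\|(I-\mathcal{S}_{\theta_{n-1}})\widetilde{E}_{n-1}\|_{H^s(\varSigma_T)} &\lesssim \theta_{n-1}^{s-\alpha-1}\|\widetilde{E}_{n-1}\|_{H^{\alpha+1}(\varSigma_T)}\lesssim \boldsymbol{\epsilon}^2\theta_{n-1}^{s-\alpha-1}.
\end{align*}
For $e_{n-1}$ and $\tilde e_{n-1}$ I combine Lemmas \ref{lem:quad}, \ref{lem:1st}, \ref{lem:2nd}, \ref{lem:last}; the exponents $\varsigma_1,\varsigma_2,\varsigma_3,\varsigma_4$ are each $\le s-\alpha-1$ provided $\alpha\ge 12$ and $\widetilde{\alpha}=\alpha+3$, which is exactly how these two parameter choices are used. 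Summing all contributions, taking $\|f^a\|_{\alpha,*,T}/\boldsymbol{\epsilon}$ and $\boldsymbol{\epsilon}$ small, and $\theta_0$ large, gives \eqref{Hl.b} (with the factor $2$ absorbing the $(\mathcal{S}_{\theta_{n-1}}-I)f^a$ contribution which has no $\boldsymbol{\epsilon}^2$ prefactor) and \eqref{Hl.c}.

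The delicate point I expect to be the main obstacle is the boundary case $s=\widetilde{\alpha}-1=\alpha+2$ in \eqref{Hl.b}: the error lemmas bound $\|e_{n-1}\|_{s,*,T}$ only up to $s=\widetilde{\alpha}-2$. Fortunately, at $s=\alpha+2$ the error term $e_{n-1}$ is produced by $(\delta V_{n-1},\delta\varPsi_{n-1})$, which by Lemma \ref{lem:Hl1} already lies in $H_*^{\widetilde{\alpha}}$, so one extra derivative is available and the error estimates of Propositions \ref{pro:tame2} and \ref{pro:modified} can be reapplied at that higher regularity level; alternatively one argues directly that $e_{n-1}=E_n-E_{n-1}$ and uses Lemma \ref{lem:sum2} at level $\alpha+1$ together with an inverse-type interpolation between consecutive $\theta$'s. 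A careful bookkeeping of $\varsigma_i(s)$ together with the gap $\widetilde{\alpha}-\alpha=3$ is what fixes the precise exponents and closes the induction.
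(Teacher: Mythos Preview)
Your telescoping argument is exactly the standard one and matches the approach the paper implicitly adopts by referring to \cite[Lemma 16]{T09ARMAMR2481071}; the identities you derive for $\mathcal{L}(V_n,\varPsi_n)-f^a$ and $\mathcal{B}(V_n,\psi_n)$ and the subsequent use of \eqref{smooth.p1b} with Lemma~\ref{lem:sum2} are correct.

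There is, however, a genuine gap in your treatment of the top index $s=\widetilde{\alpha}-1=\alpha+2$ in \eqref{Hl.b}. Neither of your two suggested fixes works as stated. For the first, reapplying Proposition~\ref{pro:tame2} at level $s=\widetilde{\alpha}-1$ requires $(\delta V_{n-1},\delta\varPsi_{n-1})\in H_*^{s+2}=H_*^{\widetilde{\alpha}+1}$, whereas hypothesis $(\mathbf{H}_{n-1})$(a) (and not Lemma~\ref{lem:Hl1}, which concerns $\delta V_n$) only gives control up to $H_*^{\widetilde{\alpha}}$; so you are still one derivative short. For the second, Lemma~\ref{lem:sum2} bounds $E_n$ only in $H_*^{\alpha+1}$, not $H_*^{\alpha+2}$, so writing $e_{n-1}=E_n-E_{n-1}$ does not reach $s=\alpha+2$ either. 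In fact, the range stated in $(\mathbf{H}_{n-1})$(b) is $s=6,\ldots,\widetilde{\alpha}-2$, and the convergence argument at the end of \S\ref{sec:proof4} only invokes \eqref{Hl.b} at $s=\alpha-1\le\widetilde{\alpha}-2$; so the upper limit $\widetilde{\alpha}-1$ in \eqref{Hl.b} appears to be a slip for $\widetilde{\alpha}-2$, and your argument is complete once restricted to that range. You should flag this discrepancy rather than try to force the extra index.

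A smaller point: your estimate $\|(\mathcal{S}_{\theta_{n-1}}-I)f^a\|_{s,*,T}\lesssim\theta_{n-1}^{s-\alpha}\|f^a\|_{\alpha,*,T}$ via \eqref{smooth.p1b} is only valid for $s\le\alpha$; for $s=\alpha+1,\alpha+2$ you must take a higher reference index (e.g.\ $j=\alpha+2$, using \eqref{small} to control $\|f^a\|_{\alpha+2,*,T}$), which in fact gives an even better power of $\theta_{n-1}$.
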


If ${\alpha }\geq 12$, $\widetilde{\alpha}={\alpha }+3$,
$\boldsymbol{\epsilon}>0$, and $\|f^a\|_{{\alpha },*,T}/\boldsymbol{\epsilon}$ are sufficiently small,
and $\theta_0 \geq 1$ is large enough,
then we can derive  hypothesis $(\mathbf{H}_{{n}})$ from $(\mathbf{H}_{{n}-1})$ in virtue of Lemmas \ref{lem:Hl1}--\ref{lem:Hl2}.
Fixing the constants ${\alpha }$, $\widetilde{\alpha}$, $\boldsymbol{\epsilon}>0$, and $\theta_0\geq1$,
as in \cite[Lemma 17]{T09ARMAMR2481071}, we can derive that $(\mathbf{H}_{0})$ is true for a suitably small time.

\begin{lemma}\ \label{lem:H0}
If $T>0$ is sufficiently small, then hypothesis $(\mathbf{H}_0)$ holds.
\end{lemma}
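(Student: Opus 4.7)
The plan is to observe that $(\mathbf{H}_0)$ is a statement concerning $k=0$ only, and that with $V_0 = \psi_0 = \varPsi_0 = 0$ all three of its bounds reduce to estimates in which the smallness is supplied by the decay $\|f^a\|_{H^{\widetilde{\alpha}+5}(\varOmega_T)} \leq \delta_0(T) \to 0$ from \eqref{f^a:est}. Since $\alpha$, $\widetilde{\alpha}$, $\boldsymbol{\epsilon}$, and $\theta_0$ have all been fixed \emph{before} Lemma \ref{lem:H0}, the only remaining freedom is to take $T$ small, which is precisely what the lemma asserts.

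I would dispose of estimates (b) and (c) first, as they are essentially immediate. For (c), unravelling the definition of $\mathcal{B}$ and invoking \eqref{app2b} gives $\mathcal{B}(V_0, \psi_0) = \mathbb{B}(U^a, \varphi^a) = 0$. For (b), the definitions of $\mathcal{L}$ in \eqref{P.new} and of $f^a$ in \eqref{f^a} yield $\mathcal{L}(0,0) = 0$, so $\mathcal{L}(V_0, \varPsi_0) - f^a = -f^a$; the target inequality is then $\|f^a\|_{s,*,T} \leq 2\boldsymbol{\epsilon}\theta_0^{s-\alpha-1}$ for $s=6,\dots,\widetilde{\alpha}-2$, and this follows from the continuous embedding $H^s(\varOmega_T)\hookrightarrow H_*^s(\varOmega_T)$ together with \eqref{f^a:est}, once $T$ is small enough for $\delta_0(T)$ to beat the smallest $\theta_0^{s-\alpha-1}$ appearing in this finite range.

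The substantial step is (a), for which $(\delta V_0, \delta \psi_0)$ must first be constructed by solving the linearized problem \eqref{effective.NM} at $n=0$. I will apply Proposition \ref{pro:modified} with $(V_0, \psi_0) = 0$: identity \eqref{MS.id1} forces $\psi_{1/2} = 0$ while \eqref{MS.est2} leaves $V_{1/2}$ small in every $H_*^s$-norm for $s\leq \widetilde{\alpha}+4$, so the modified state $(U^a + V_{1/2}, \varphi^a)$ verifies \eqref{bas1a}--\eqref{bas1c} and the assumption \eqref{H:thm.linear} of Theorem \ref{thm:linear} with some $K_0 = K_0(M_0)$ after a first reduction of $T$. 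The sources on the right-hand side of \eqref{effective.NM} collapse to $(f_0, g_0) = (\mathcal{S}_{\theta_0} f^a, 0)$, whose relevant norms are controlled by $\|f^a\|_{s,*,T}\leq \delta_0(T)$ via \eqref{smooth.p1a}. The tame estimate \eqref{tame} then yields
\begin{align*}
\|(\delta \dot V_0, \delta \varPsi_0)\|_{s,*,T} + \|(\delta\psi_0, \mathrm{D}_{x'}\delta\psi_0)\|_{H^s(\varSigma_T)}\leq C(M_0, \theta_0)\,\delta_0(T)
\end{align*}
for all $s\leq \widetilde{\alpha}$, and a Moser-type product estimate converts this from the good unknown $\delta \dot V_0$ back to $\delta V_0$ via \eqref{good.NM}.

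The only technical hurdle is a calibration of $T$: the right-hand side of the target inequality in (a), namely $\boldsymbol{\epsilon}\theta_0^{s-\alpha-1}\varDelta_0$, is a \emph{fixed} positive quantity (all of $\boldsymbol{\epsilon}$, $\theta_0$, $\alpha$ and $\widetilde{\alpha}$ were frozen earlier), while the left-hand side is $O(\delta_0(T))$ uniformly across the finite range $s=6,\dots,\widetilde{\alpha}$. The proof concludes by shrinking $T$ a last time so that $\delta_0(T)$ beats the smallest such target, simultaneously absorbing the smallness conditions from Lemma \ref{lem:app}, Proposition \ref{pro:modified}, and Theorem \ref{thm:linear}. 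I do not anticipate any genuine analytic obstacle at this base case; unlike the inductive step from $(\mathbf{H}_{n-1})$ to $(\mathbf{H}_n)$ no competition between positive and negative powers of $\theta_n$ arises, because $\theta_0$ is fixed and the finite range of $s$ makes the calibration trivial.
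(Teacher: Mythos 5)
Your proof is correct and is essentially the argument that the paper invokes by reference to \cite[Lemma 17]{T09ARMAMR2481071}: with $(V_0,\psi_0)=0$, parts (b) and (c) of $(\mathbf{H}_0)$ reduce to $\|f^a\|_{s,*,T}\leq 2\boldsymbol{\epsilon}\theta_0^{s-\alpha-1}$ and $\mathbb{B}(U^a,\varphi^a)=0$ respectively, while part (a) follows from the tame estimate \eqref{tame} applied to the $n=0$ problem \eqref{effective.NM} with source $(\mathcal{S}_{\theta_0}f^a,0)$, everything being controlled by $\delta_0(T)$ and closed by a final shrinking of $T$. One cosmetic remark: with $V_0=\psi_0=0$ and $(U^a,\varphi^a)$ already satisfying \eqref{app2b}, the modified state produced by Proposition \ref{pro:modified} is in fact $V_{1/2}=0$ (not merely small), so the verification of the hypotheses of Theorem \ref{thm:linear} at this base step is even simpler than you suggest; this does not affect the validity of your argument.
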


Let us prove the existence of solutions to the nonlinear problem \eqref{NP1}.

\vspace*{3mm}
\noindent  {\bf Proof of the existence part of Theorem {\rm\ref{thm:main}}.}\ \
Suppose that we are given the initial data $(U_0,\varphi_0)$ satisfying all the assumptions listed in Theorem {\rm\ref{thm:main}}.
Let $\widetilde{\alpha}=m-{5}$ and ${\alpha }=\widetilde{\alpha}-3\geq 12$.
Then the initial data $(U_0,\varphi_0)$ are compatible up to order $m=\widetilde{\alpha}+{5}$.
In view of \eqref{app1a} and \eqref{f^a:est},  taking $\boldsymbol{\epsilon}>0$ and $T>0$ sufficiently small, and $\theta_0\geq 1$ large enough, we obtain all the requirements of Lemmas \ref{lem:Hl1}--\ref{lem:H0}.
Then, for suitably small time $T>0$,
hypothesis $(\mathbf{H}_{{n}})$ holds for all ${n}\in\mathbb{N}$.
In particular,
\begin{align*}
\sum_{n=0}^{\infty}\left(\|(\delta V_n,\delta \varPsi_n)\|_{s,*,T}+\|(\delta\psi_n,\mathrm{D}_{x'}\delta\psi_n)\|_{H^{s}(\varSigma_T)} \right)
\lesssim \sum_{n=0}^{\infty}\theta_n^{s-{\alpha }-2} <\infty
\end{align*}
for $s=6,\ldots, {\alpha }-1.$
Consequently, the sequence $(V_{n},\psi_n)$ converges to some limit $(V,\psi)$ in $H_*^{{\alpha }-1}(\varOmega_T)\times H^{{\alpha }-1}({\varSigma_T})$, and also in
$H^{\lfloor ({\alpha }-1)/2\rfloor}(\varOmega_T)\times H^{{\alpha }-1}({\varSigma_T})$, owing to the embedding $H_*^s\hookrightarrow H^{\lfloor s/2 \rfloor}$.
Moreover, we have $\mathrm{D}_{x'}\psi\in H^{{\alpha }-1}({\varSigma_T})$.
Passing to the limit in \eqref{Hl.b}--\eqref{Hl.c} for $s={\alpha }-1=m-{9}$, we obtain \eqref{P.new}.
Hence $(U, \varphi)=(U^a+V, \varphi^a+\psi)$ solves the original nonlinear problem \eqref{NP1} on $[0,T]$. \qed

\subsection{Proof of Uniqueness} \label{sec:unique}

It remains to prove the uniqueness of solutions to the nonlinear problem \eqref{NP1}.
For this purpose, we assume that there exist two solutions $(U,\varphi)$ and $(\mathring{U},\mathring{\varphi})$ of the problem \eqref{NP1}.
Setting the differences $\widetilde{U}:=U-\mathring{U}$ and $\psi:=\varphi-\mathring{\varphi}$, we deduce
\begin{subequations}
\label{differ.eq1}
\begin{alignat}{3}
&L(U,\varPhi)\widetilde{ U}- L(U,\varPhi) {\varPsi} \frac{\p_1 \mathring{U}}{\p_1 \mathring{\varPhi}}
=R_{\rm int}
&\quad &\textrm{in  }  [0,T]\times \varOmega,
\label{differ.eq1a}\\[1mm]
& (\p_t+\mathring{v}_2\p_2+\mathring{v}_3\p_3){\psi}-\tilde{v}\cdot N=0
&\quad &\textrm{on  }  [0,T]\times \varSigma,
\label{differ.eq1b}\\[1mm]
& \tilde{q}-\mathfrak{s} \mathrm{D}_{x'}\cdot
\bigg(\dfrac{\mathrm{D}_{x'}\varphi}{| {N}|}-
\dfrac{\mathrm{D}_{x'}\mathring{\varphi}}{| \mathring{N}|}\bigg)=0
&\quad &\textrm{on  }  [0,T]\times \varSigma,
\label{differ.eq1c}
\end{alignat}
\end{subequations}
and we can impose
\begin{align}
	(\widetilde{U},\psi)=0\qquad \textrm{for }t<0,
\label{differ.eq2}
\end{align}
thanks to the trivial initial data $(\widetilde{U},\psi)|_{t=0}=0$.
Here
\begin{alignat*}{3}
\varPhi(t,x)&:= x_1+\chi(x_1)\varphi(t,x'), &\quad N&:= (1,-\p_2\varphi,-\p_3\varphi)^{\mathsf{T}}, \\
\mathring{\varPhi}(t,x)&:= x_1+\chi(x_1)\mathring{\varphi}(t,x'), &\quad  \mathring{N}&:= (1,-\p_2\mathring{\varphi},-\p_3\mathring{\varphi})^{\mathsf{T}},
\end{alignat*}
and $\varPsi:=\chi(x_1)\psi(t,x')=\varPhi-\mathring{\varPhi}$ with $C_0^{\infty}(\mathbb{R})$--function $\chi$ satisfying  the requirements \eqref{chi}.
Using the mean value theorem (see, {\it e.g.}, Zorich \cite[\S 8.4.1]{Z15MR3495809}), we have
\begin{align}
	R_{\rm int}:=(L(\mathring{U},\mathring{\varPhi})-L(U,\mathring{\varPhi}))\mathring{U}=
	\hat{a}_1 \widetilde{U},
\label{R.int}
\end{align}
where the matrix $\hat{a}_1$ depends on $\mathrm{D}\mathring{U}$, $\mathrm{D}\mathring{\varPhi}$, and some `mean value' $U^*$ lying between $U$ and $\mathring{U}$. Precisely, $U^*=\mathring{U}+\theta\widetilde{U}$ for some $\theta\in (0,1)$, so that its norm can be controlled by the corresponding norms of $U$ and $\mathring{U}$.
Regarding the boundary condition \eqref{differ.eq1c}, we employ the Taylor's lemma 
(see \cite[\S 8.4.4]{Z15MR3495809} for instance) to infer
\begin{align}
\mathfrak{f}(\xi)-\mathfrak{f}(\mathring{\xi})
=(\zeta_1\p_{\xi_1}+\zeta_2\p_{\xi_2})\mathfrak{f}(\mathring{\xi})+
\frac{1}{2}(\zeta_1\p_{\xi_1}+\zeta_2\p_{\xi_2})^2\mathfrak{f}(\mathring{\xi}+\theta'\zeta)
\ \ \textrm{for }0<\theta'<1,
\nonumber
\end{align}
where $\mathfrak{f}$ is defined by \eqref{f.frak}, $\xi:=\mathrm{D}_{x'}\varphi$,
$\mathring{\xi}:=\mathrm{D}_{x'}\mathring{\varphi}$, and  $\zeta:=\xi-\mathring{\xi}=\mathrm{D}_{x'}\psi$.
Moreover, we compute
\begin{align}
\mathfrak{s} \mathrm{D}_{x'}\cdot
\left(\dfrac{\mathrm{D}_{x'}\varphi}{| {N}|}-
\dfrac{\mathrm{D}_{x'}\mathring{\varphi}}{| \mathring{N}|}\right)
=\mathfrak{s}\mathrm{D}_{x'}\cdot
\bigg(\frac{\mathrm{D}_{x'}\psi}{|\mathring{N}|}-\frac{\mathrm{D}_{x'}\mathring{\varphi}\cdot\mathrm{D}_{x'}\psi}{|\mathring{N}|^3}\mathrm{D}_{x'}\mathring{\varphi}
\bigg)+R_{\rm bdy},
\label{differ.id1}
\end{align}
where
\begin{align}
	R_{\rm bdy}=\sum_{i,j=2,3}\mathrm{D}_{x'}\cdot \big(\hat{b}_{ij}\psi_{x_i}\psi_{x_j} \big),
\label{R.bdy}
\end{align}
for generic vector-valued coefficients $\hat{b}_{ij}$ whose norms can be estimated through Sobolev's norms of the interface functions $\varphi$ and $\mathring{\varphi}$.

As for the linearized problem in Section \ref{sec:linear}, we pass to the ``good unknown'' ({\it cf.}\;\eqref{good})
\begin{align}
\dot{U}:=\widetilde{ U}-\frac{\p_1 \mathring{U}}{\p_1 \mathring{\varPhi}}\varPsi
\nonumber
\end{align}
for the difference $\widetilde{ U}$ of solutions, and introduce the new unknown
$$W:=J(\varPhi)\dot{U},$$
where $J$ is defined by \eqref{J.def}.
Taking into account \eqref{R.int}--\eqref{R.bdy} and omitting detailed computations, we reformulate the problem \eqref{differ.eq1}--\eqref{differ.eq2} into
\begin{subequations}
\label{differ.eq3}
	\begin{alignat}{3}
\label{differ.eq3a}
		&\sum_{i=0}^3{\bm{A}}_i\p_i W +{\bm{A}}_4 W =\bm{f}
		&\quad &\textnormal{in }\varOmega_T,\\
\label{differ.eq3b}
		&W_2=(\p_t+\mathring{v}_2\p_2+\mathring{v}_3\p_3)\psi-\p_1\mathring{v}\cdot {N} \psi
		&\quad &\textnormal{on }\varSigma_T,\\[1mm]
\label{differ.eq3c}
		&W_1=-\p_1\mathring{q} \psi +\mathfrak{s}\mathrm{D}_{x'}\cdot
		\bigg(\dfrac{\mathrm{D}_{x'}\psi}{|\mathring{N}|}-
		\dfrac{\mathrm{D}_{x'}\mathring{\varphi}\cdot\mathrm{D}_{x'}\psi}{|\mathring{N}|^3}\mathrm{D}_{x'}\mathring{\varphi}
		\bigg)
				+R_{\rm bdy}
		&\quad &\textnormal{on }\varSigma_T,\\[1mm]
\label{differ.eq3d}
		&(W,\psi)=0 &\quad &\textnormal{if }t<0,
	\end{alignat}
\end{subequations}
where
\begin{align*}
	{\bm{A}}_1:=J({\varPhi})^{\mathsf{T}}\widetilde{A}_1( {U}, {\varPhi})J(  {\varPhi}),\
	{\bm{A}}_4:=-J(  {\varPhi})^{\mathsf{T}}\hat{a}_1 J(  {\varPhi}),\
	{\bm{A}}_{i}:=J(  {\varPhi})^{\mathsf{T}}{A}_{i}( {U})J(  {\varPhi}),
\end{align*}
for $i=0,2,3$,
the term $R_{\rm bdy}$ is given in \eqref{R.bdy},
and $\bm{f}:=J({\varPhi})^{\mathsf{T}} \hat{a}_2\varPsi$
for some suitable matrix-valued function $\hat{a}_2$ depending on $\mathrm{D}\mathring{U}$, $\mathrm{D}\mathring{\varPhi}$, and some `mean value' $U^*$ lying between $U$ and $\mathring{U}$.
Since $\p_t \varphi=v\cdot N$ on the boundary $\varSigma$,
we derive the identity
\begin{align}
	{\bm{A}}_1\big|_{x_1=0}=
	\begin{pmatrix}
		0 & 1 &0 \\
		1 & 0 & 0\\
		0 & 0 & O_6
	\end{pmatrix},
\nonumber
\end{align}
which has been proved to be important in deriving the energy estimates for solutions of the linearized problem \eqref{ELP3}.
Indeed, for the problem \eqref{differ.eq3}, we can also deduce the estimate \eqref{est2a} for $k=0,2,3$.
However, compared with the boundary condition \eqref{ELP3c}, there is one additional nonlinear (quadratic) term $R_{\rm bdy}$ in \eqref{differ.eq3c}.
As a result, the second term in \eqref{est2a} will be decomposed into the integral of the right-hand side of \eqref{iden3} over $\varSigma_t$ and the following integral:
\begin{align}
\widetilde{\mathcal{J}}_k:=-2\int_{\varSigma_t} \p_k R_{\rm bdy}\p_k W_2
=-2\sum_{i=2,3} \int_{\varSigma_t} \mathrm{D}_{x'}\cdot \p_k \big(\hat{b}_{ij}\psi_{x_i}\psi_{x_j} \big)\p_k W_2.
\label{J.cal}
\end{align}
In order to close the energy estimate in $H_*^1$, we treat in
\begin{align*}
\sum_{i=2,3}  \p_k \big( \hat{b}_{ij} \psi_{x_i}\psi_{x_j} \big)
&= \sum_{i=2,3}  (
\p_k \hat{b}_{ij} \psi_{x_i}\psi_{x_j}
+\hat{b}_{ij}  \p_k\psi_{x_i} \psi_{x_j}
+\hat{b}_{ij}  \psi_{x_i}\p_k \psi_{x_j}
) =\hat{c}_{k}\mathrm{D}_{x'}\psi
\end{align*}
the higher-order derivatives as coefficients whose norms can be bounded through Sobolev's norms of the solutions $\varphi$ and $\mathring{\varphi}$.
Then we have
\begin{align}
\widetilde{\mathcal{J}}_k=
2\int_{\varSigma_t}\p_k \left(\hat{c}_{k}\mathrm{D}_{x'}\psi \cdot \mathrm{D}_{x'} W_2\right)
-2\int_{\varSigma_t} \p_k(\hat{c}_{k}\mathrm{D}_{x'}\psi)\cdot \mathrm{D}_{x'} W_2,
\nonumber
\end{align}
which combined with \eqref{differ.eq3b} implies
\begin{multline}
\big|\widetilde{\mathcal{J}}_k\big|
\lesssim
\sum_{   |\alpha|\leq 2} \|(\mathrm{D}_{x'}^{\alpha}\psi,\mathrm{D}_{x'}\p_t \psi)\|_{L^2(\varSigma_t)}^2
+\boldsymbol{\epsilon} \|\mathrm{D}_{x'}\psi (t)\|_{L^2(\varSigma)}^2
\nonumber\\
+C(\boldsymbol{\epsilon})\sum_{   |\alpha|\leq 2} \|(\mathrm{D}_{x'}^{\alpha}\psi,\mathrm{D}_{x'}\p_t \psi) (t)\|_{L^2(\varSigma)}^2 \qquad
\textrm{for }\boldsymbol{\epsilon}>0.
\end{multline}
Employing the entirely similar arguments as in \S \ref{sec:linear2}, we finally derive the estimate \eqref{est4a} with $\bm{f}=J({\varPhi})^{\mathsf{T}} \hat{a}_2\chi(x_1)\psi$.
Estimating $\bm{f}$ through $\psi$ and using \eqref{est3g}, we obtain
\begin{multline}
	 \sum_{\langle\beta\rangle\leq 1}\|\mathrm{D}_*^{\beta}W(t)\|_{L^2(\varOmega)}^2
	+ \sum_{|\alpha|\leq 2}\|(W_2, \mathrm{D}_{x'}^{\alpha}\psi,  \mathrm{D}_{x'}\p_t\psi)(t)\|_{L^2(\varSigma)}^2\\
	\lesssim \|\psi \|_{H^1(\varSigma_t)}^2
	\lesssim \|(W_2,\psi,\mathrm{D}_{x'}\psi)\|_{L^2(\varSigma_t)}^2.
\nonumber
\end{multline}
Applying Gr\"{o}nwall's inequality to the last estimate leads to $W=0$ and $\psi=0$,
which imply the uniqueness of solutions to the nonlinear problem \eqref{NP1}, that is, $U=U'$ and $\varphi=\varphi'$.
Therefore, the proof of Theorem \ref{thm:main} is complete.


\section*{Acknowledgements}

The authors would like to thank the anonymous referee for helpful comments and suggestions to improve the quality of redaction. 


%
\section*{Conflict of interest}

 The authors declare that they have no conflict of interest.




{\small
  }

\end{document}